\documentclass[a4paper,reqno,10pt]{amsart}
\raggedbottom
\hfuzz3pt
\makeatletter
\newcommand*{\rom}[1]{\expandafter\@slowromancap\romannumeral #1@}
\makeatother
\usepackage{epsf,graphicx,epsfig}
\usepackage{amscd}
\usepackage{amsmath,latexsym,amssymb,amsthm}
\usepackage{fdsymbol}
\usepackage[nospace,noadjust]{cite}
\usepackage{textcomp}
\usepackage{nccmath}
\usepackage{setspace,cite}
\usepackage{mathrsfs,amsmath}
\usepackage{chemarrow}
\usepackage{lscape,fancyhdr,fancybox}
\usepackage{stmaryrd}
\usepackage[all,cmtip]{xy}
\usepackage{tikz-cd}
\usepackage{cancel}
\usepackage[usestackEOL]{stackengine}
\usetikzlibrary{shapes,arrows,decorations.markings}
\setlength{\unitlength}{0.4in}

\usepackage{graphicx}

\usepackage{color}
\usepackage{bbm, dsfont}
\usepackage{xcolor}
\usepackage{url}
\usepackage{enumerate}
\usepackage[mathscr]{euscript}


\setlength{\topmargin}{-0.5in}
\setlength{\textheight}{9.8in}
\setlength{\oddsidemargin}{-.1in}
\setlength{\evensidemargin}{-.1in}
\setlength{\textwidth}{6.4in}

  \theoremstyle{plain}
\swapnumbers
    \newtheorem{thm}{Theorem}[section]
    \newtheorem{proposition}[thm]{Proposition}
     \newtheorem{theorem}[thm]{Theorem}

    \newtheorem{subsec}[thm]{}
\theoremstyle{definition}
    \newtheorem{definition}[thm]{Definition}
        \newtheorem{remark}[thm]{Remark}
\theoremstyle{remark}

\setcounter{tocdepth}{1}

\title{}
\author{}
\date{}
\usepackage{amssymb}

\usepackage{hyperref}
\hypersetup{
colorlinks,
citecolor=blue,
filecolor=black,
linkcolor=blue,
urlcolor=black
}

\newcommand{\g}{\vartriangleright}
\newcommand{\gdesh}{\vartriangleright '}
\newcommand{\gfill}{\blacktriangleright}
\newcommand{\lt}{\vartriangleleft}
\newcommand{\ldesh}{\vartriangleleft '}
\newcommand{\lfill}{\blacktriangleleft}

\usepackage[left=25mm,right=25mm,top=25mm,bottom=25mm,paper=a4paper]{geometry}

\begin{document}



\title[]{Fr\"{o}licher-Nijenhuis bracket and derived bracket associated to a nonsymmetric operad with multiplication}

\author{Anusuiya Baishya}
\address{Department of Mathematics,
Indian Institute of Technology, Kharagpur 721302, West Bengal, India.}
\email{anusuiyabaishya530@gmail.com}

\author{Apurba Das}
\address{Department of Mathematics,
Indian Institute of Technology, Kharagpur 721302, West Bengal, India.}
\email{apurbadas348@gmail.com, apurbadas348@maths.iitkgp.ac.in}

\begin{abstract}
This paper aims to construct two graded Lie algebras associated with a nonsymmetric operad with multiplication. Maurer-Cartan elements of these graded Lie algebras correspond respectively to Nijenhuis elements and Rota-Baxter elements for the given multiplication. Explicit forms of these brackets are given to study Nijenhuis operators and Rota-Baxter operators on some Loday-type algebras.
\end{abstract}



\maketitle

\medskip

\medskip

    {\sf 2020 MSC classification.} 16E40, 16W99, 17B70, 18M65.
    
    {\sf Keywords.} Nonsymmetric operads, Fr\"{o}licher-Nijenhuis bracket, Derived bracket, Nijenhuis operators, Rota-Baxter operators.

\noindent

\thispagestyle{empty}

\tableofcontents

\vspace{0.2cm}

\section{Introduction}
    In a classical work, Gerstenhaber \cite{gers-ring} showed that the graded space of all multilinear maps on a vector space $A$ carries a graded Lie bracket (known as the {\em Gerstenhaber bracket}) that generalizes the commutator Lie bracket on the space of linear maps on $A$. A bijective correspondence exists between associative algebra structures on $A$ and Maurer-Cartan elements for the Gerstenhaber bracket. If $A$ is equipped with an associative algebra structure (i.e., endowed with a Maurer-Cartan element for the Gerstenhaber bracket) then the graded space of multilinear maps (which are the Hochschild cochains) also carries a {\em cup-product}. It has been observed in \cite{gers-ring} that the Gerstenhaber bracket and the cup-product pass onto the Hochschild cohomology level of the associative algebra. Further, these operations make the graded space of Hochschild cohomology a rich structure known as the {\em Gerstenhaber algebra}. Subsequently, Gerstenhaber and Voronov \cite{gers-voro} discovered this rich structure on Hochschild cohomology of an associative algebra in terms of a nonsymmetric operad endowed with a multiplication.

    \medskip

    A nonsymmetric operad $\mathcal{P}$ is a triple $\mathcal{P} = (\mathcal{P}, \circ, \mathbbm{1})$ consisting of a collection $\mathcal{P} = \{ \mathcal{P}_n \}_{n \geq 1}$ of vector spaces equipped with bilinear maps $\circ_i: \mathcal{P}_m \times \mathcal{P}_n \rightarrow \mathcal{P}_{m+n-1}$ (for $m, n \geq 1$ and $1 \leq i \leq m$), called partial compositions that satisfy certain associative-like identities and there exists an element $\mathbbm{1} \in \mathcal{P}_1$ that plays the role of identity for the partial compositions. A first example is given by the endomorphism operad $\mathcal{P} = \mathrm{End}_A$ associated with a vector space $A$ \cite{loday-val}. A nonsymmetric operad $\mathcal{P}$ naturally induces a graded Lie bracket (called the {\em Gerstenhaber-Voronov bracket}) $[~,~]_\mathsf{GV}$ on the graded space $\mathcal{P}_{\bullet + 1} = \bigoplus_{n \geq 0} \mathcal{P}_{n+1}$ that generalizes the Gerstenhaber bracket. Given a nonsymmetric operad $\mathcal{P}$, an element $\pi \in \mathcal{P}_2$ is called a {\em multiplication} if it satisfies $\pi \circ_1 \pi = \pi \circ_2 \pi$. When $\mathcal{P} = \mathrm{End}_A$ is the endomorphism operad associated with a vector space $A$, any multiplication is nothing but an associative algebra structure on $A$. A multiplication $\pi$ on a nonsymmetric operad $\mathcal{P}$ induces a cup-product $\cup_\pi$ and a differential $\delta_\pi$ on the graded space $\mathcal{P}_\bullet = \bigoplus_{n \geq 1} \mathcal{P}_{n}$. Further, $\delta_\pi$ is a graded derivation for the cup-product $\cup_\pi$. Just like an associative algebra, there are several other algebraic structures  (e.g. dendriform algebra \cite{loday-di}, tridendriform algebra \cite{loday-ronco}, diassociative algebra \cite{loday-di}, quadri-algebra \cite{aguiar-loday}, ennea algebra \cite{leroux}, NS-algebra \cite{uchino}, Hom-associative algebra \cite{das}, associative conformal algebra \cite{bakalov,hou} etc.) that can be described by nonsymmetric operads endowed with multiplication. All these algebras are often referred to as `Loday-type algebras' \cite{yau, yu, das}. It is important to note that Lie algebras, Leibniz algebras or any algebras whose defining identity/identities have shufflings cannot be described by multiplications in nonsymmetric operads.

\medskip

Our primary aim in this paper is to construct two new graded Lie brackets associated with a nonsymmetric operad with multiplication. The first one is the {\em Fr\"{o}licher-Nijenhuis bracket} that generalises the classical Fr\"{o}licher-Nijenhuis bracket \cite{fro-nij-1,fro-nij-2} defined on the space of all vector-valued differential forms. The second one is the {\em derived bracket} which generalises the well-known derived bracket \cite{das-rota} defined on the Hochschild cochains of an associative algebra characterizing Rota-Baxter operators as its Maurer-Cartan elements. Finally, we will explicitly write these brackets for some Loday-type algebras.


\medskip

    Let $\mathcal{P}$ be a nonsymmetric operad and $\pi$ be a multiplication on $\mathcal{P}$. Since the cup-product $\cup_\pi$ is graded associative, its graded commutator yields a graded Lie bracket (called the {\em cup bracket}) $[~,~]_\pi$ on $\mathcal{P}_\bullet$. We find some new descriptions of the bracket $[~,~]_\pi$ which are useful in proving our main results. We also find a differential $D$ on the graded space $\mathcal{P}_\bullet$ that makes $(\mathcal{P}_\bullet, [~,~]_\pi, D)$ into a differential graded Lie algebra. An element $\varphi \in \mathcal{P}_1$ is a Maurer-Cartan element of this differential graded Lie algebra if and only if
    \begin{align*}
         \varphi \circ_1 \pi= (\pi \circ_2 \varphi) \circ_1 \varphi.
    \end{align*}
    Such an element $\varphi$ is said to preserve the multiplication $\pi$. When $\mathcal{P}$ is the endomorphism operad $\mathrm{End}_A$ and the multiplication $\pi$ corresponds to an associative algebra structure on $A$, then an element $\varphi \in (\mathrm{End}_A)_1 = \mathrm{Hom}(A, A)$ preserves the multiplication $\pi$ if and only if it is an associative algebra {homomorphism}.

    \medskip
    
    Next, given a nonsymmetric operad $\mathcal{P}$ with a multiplication $\pi$, we first define {\em Nijenhuis elements} for the multiplication $\pi$ as a generalization of Nijenhuis operators on associative algebras \cite{grab}. We show that a Nijenhuis element induces a hierarchy of pairwise compatible Nijenhuis elements generalizing a well-known result about classical Nijenhuis operators. Next, we find an action of the Gerstenhaber-Voronov algebra $(\mathcal{P}_{\bullet +1} , [~,~]_\mathsf{GV})$ on the graded Lie algebra $(\mathcal{P}_\bullet, [~,~]_\pi)$ by graded derivations. This result yields the semidirect product graded Lie algebra structure on the direct sum $\mathcal{P}_{\bullet + 1} \oplus \mathcal{P}_\bullet$. For each $m, n \geq 1$, we then define a bracket $[~,~]_\mathsf{FN} :  \mathcal{P}_m \times  \mathcal{P}_n \rightarrow  \mathcal{P}_{m+n}$ by
        \begin{align*}
            [f,g]_\mathsf{FN}:=[f,g]_\pi + (-1)^m ~ \!  \iota_{(\delta_\pi f)} g - (-1)^{(m+1)n} ~\!  \iota_{(\delta_\pi g)} f,
        \end{align*}
    for $f \in  \mathcal{P}_m$ and $g \in  \mathcal{P}_n$. Here $~\iota~$ is the contraction operator. The above bracket is graded skew-symmetric and it satisfies the graded Jacobi identity (that can be shown by using the graded Jacobi identity of the semidirect product bracket defined on  $\mathcal{P}_{\bullet + 1} \oplus \mathcal{P}_\bullet$). The bracket $[~,~]_\mathsf{FN}$ is called the {\em Fr\"{o}licher-Nijenhuis bracket} and the graded Lie algebra $(\mathcal{P}_\bullet, [~,~]_\mathsf{FN})$ is called the {\em Fr\"{o}licher-Nijenhuis algebra} associated to the nonsymmetric operad $\mathcal{P}$ with the multiplication $\pi$. Then an element $N \in \mathcal{P}_1$ is a Maurer-Cartan element in the Fr\"{o}licher-Nijenhuis algebra $(\mathcal{P}_\bullet, [~,~]_\mathsf{FN})$ if and only if $N$ is a Nijenhuis element for the multiplication $\pi$. Using this characterization, we also define the cohomology of a distinguished Nijenhuis element $N$.

\medskip

In the next part, given a nonsymmetric operad $\mathcal{P}$ with a multiplication $\pi$, we first define {\em Rota-Baxter elements of weight $\lambda \in {\bf k}$} for the multiplication $\pi$ as a generalization of Rota-Baxter operators \cite{das-rota,guo-book}. For any $m, n \geq 1$, we define another bracket $[~,~]_\mathsf{D} : \mathcal{P}_m \times \mathcal{P}_n \rightarrow \mathcal{P}_{m+n}$ by
        \begin{align*}
            [f,g]_\mathsf{D} := [f,g]_\pi + \iota_{\theta f} g - (-1)^{mn} ~\!  \iota_{\theta g} f, 
        \end{align*}
    for $f \in \mathcal{P}_m$ and $g \in \mathcal{P}_n$. Here the map $\theta$ is given in (\ref{theta}). This bracket is graded skew-symmetric and also satisfies the graded Jacobi identity. We call $[~,~]_\mathsf{D}$ the {\em derived bracket} and the graded Lie algebra $(\mathcal{P}_\bullet, [~,~]_\mathsf{D})$ is called the {\em derived algebra} associated to the nonsymmetric operad $\mathcal{P}$ with the multiplication $\pi$. Next, for any given scalar $\lambda \in {\bf k}$, we define a map $d_\lambda : \mathcal{P}_n \rightarrow \mathcal{P}_{n+1}$ (for any $n \geq 1$) by
        \begin{align*}
             d_\lambda f = -\lambda ~ \! \iota_\pi f, \text { for } f \in \mathcal{P}_n.
        \end{align*}
    The map $d_\lambda$ turns out to be a differential (i.e., $d_\lambda^2 = 0$) and a graded derivation for the derived bracket $[~,~]_\mathsf{D}$. In other words, $(\mathcal{P}_\bullet, [~,~]_\mathsf{D}, d_\lambda)$ becomes a differential graded Lie algebra (dgLa). We observed that an element $R \in \mathcal{P}_1$ is a Maurer-Cartan element in the graded Lie algebra $(\mathcal{P}_\bullet, [~,~]_\mathsf{D})$ (resp. dgLa $(\mathcal{P}_\bullet, [~,~]_\mathsf{D}, d_\lambda)$ ) if and only if $R$ is a Rota-Baxter element of weight $0$ (resp. of weight $\lambda \in {\bf k}$) for the multiplication $\pi$. Subsequently, we define the cohomology of a distinguished Rota-Baxter element $R$ of any weight $\lambda \in {\bf k}$. We show that the cohomology of $R$ can be realized as the cohomology of the induced multiplication with coefficients in a suitable representation. Subsequently, we also consider {\em averaging elements} for the multiplication $\pi$ as a generalization of averaging operators on associative algebras \cite{das-avg,pei-guo}. Using the help of planar binary trees, we construct another graded Lie algebra whose Maurer-Cartan elements are precisely averaging elements for the multiplication $\pi$.

\medskip

As mentioned earlier, an associative algebra structure on a vector space $A$ is equivalent to having a multiplication on the endomorphism operad $\mathrm{End}_A$. The Fr\"{o}licher-Nijenhuis bracket (resp. the derived bracket) for an associative algebra $A$ is defined to be the Fr\"{o}licher-Nijenhuis bracket (resp. the derived bracket) associated with the endomorphism operad $\mathrm{End}_A$ with the multiplication that corresponds to the given associative structure on $A$. For an associative algebra, we provide the explicit descriptions of these brackets. It is also well-known that a dendriform algebra structure is equivalent to having a multiplication in a suitable nonsymmetric operad \cite{das}. Hence, we may define the Fr\"{o}licher-Nijenhuis bracket and the derived bracket for a given dendriform algebra. We also explicitly write these brackets for a Hom-associative algebra. As demonstrated in \cite{hou}, an associative conformal algebra can be represented using a nonsymmetric operad with multiplication, which allows for the definition of these brackets within that framework.

\medskip


   
   In this paper, we study Nijenhuis elements, Rota-Baxter elements and averaging elements as Maurer-Cartan elements in suitable graded Lie algebras. In particular, we explicitly write the Fr\"{o}licher-Nijenhuis bracket and the derived bracket for various associative-like algebras to study Nijenhuis operators and Rota-Baxter operators on them. However, these operators can be defined on other algebraic structures (e.g. Lie algebras, Leibniz algebras) which are not described by nonsymmetric operads equipped with multiplication. In future work, we will study these operators on algebras over any binary quadratic operad and construct the corresponding Fr\"{o}licher-Nijenhuis bracket and derived bracket.

   \medskip

All vector spaces, tensor products, linear and multilinear maps are over a field ${\bf k}$ of characteristics $0$ unless specified otherwise.

\medskip

\section{Nonsymmetric operad with multiplication}\label{sec2}

    This section aims to recall some basic definitions and results about nonsymmetric operads endowed with multiplications. Our main references are \cite{gers-ring,gers-voro,loday-val}.

    \begin{definition} \label{nonsymm}
         A {\bf nonsymmetric operad} (also called {\bf non-$\Sigma$ operad}) is a triple $\mathcal{P} = (\mathcal{P}, \circ , \mathbbm{1})$ consisting of a collection $\mathcal{P}=\{\mathcal{P}_n\}_{n \geq 1}$ of vector spaces equipped with bilinear maps 
            \begin{align} \label{compo}
                \circ_i : \mathcal{P}_m \times \mathcal{P}_n \rightarrow \mathcal{P}_{m+n-1} \quad \text{(for  $1 \leq i \leq m$)}
            \end{align}
        that satisfy the following conditions: for $ f\in \mathcal{P}_m$, $g \in \mathcal{P}_n$ and $h \in \mathcal{P}_k$,
            \begin{align*}
                f \circ_i (g \circ_j h) & = (f \circ_i g) \circ_{i+j-1} h,  \text{ for } 1\leq i \leq m, 1 \leq j \leq n, \\
                (f \circ_i g) \circ_{j+n-1} h & = (f \circ_j h) \circ_{i} g, \text{ for } 1 \leq i < j \leq m
            \end{align*}
        and there is an element $\mathbbm{1} \in \mathcal{P}_1$ such that $f \circ_i \mathbbm{1}= \mathbbm{1} \circ_1 f = f $, for all $f \in \mathcal{P}_m$ and $1 \leq i \leq m$.
    \end{definition}

    The maps given in (\ref{compo}) are called `partial compositions' and Definition \ref{nonsymm} is called the `partial definition' of a nonsymmetric operad. This definition first appeared in a paper by Gerstenhaber \cite{gers-ring} under the name of `pre-Lie system'. We refer \cite{loday-val} for some other equivalent definitions of a nonsymmetric operad.
    
\begin{remark}
Nonsymmetric operads encode those types of algebras for which the generating operations have no symmetry and the defining relations are multilinear in which the variables stay in the same order. A nonsymmetric operad  $(\mathcal{P}, \circ, \mathbbm{1})$ is said to be a symmetric operad (or simply an operad) if for each $n \geq 1$, the space $\mathcal{P}_n$ is endowed with a right $\mathbf{k}[\mathbb{S}_n]$-module structure and all such actions are compatible with the partial compositions. A generic type of algebras is encoded by symmetric operads.
\end{remark}

    A toy example of a nonsymmetric operad is given by the endomorphism operad associated with a vector space $A$. Recall that the endomorphism operad $\mathrm{End}_A$ is given by $(\mathrm{End}_A)_n := \mathrm{Hom}(A^{\otimes n}, A)$ for $n \geq 1$ and the partial compositions are given by 
        \begin{align} \label{ass-circ}
            ( f \circ_i g)(a_1, \ldots , a_{m+n-1})= f(a_1, \ldots ,a_{i-1}, g (a_i, \ldots , a_{i+n-1}), \ldots , a_{m+n-1}),
        \end{align}
    for $f \in (\mathrm{End}_A)_m, ~ \! g \in  (\mathrm{End}_A)_n$ and $a_1,\ldots, a_{m+n-1} \in A$. The identity map $\mathrm{Id} \in (\mathrm{End}_A)_1$ is the identity element.

    Let $\mathcal{P}$ be a nonsymmetric operad. For each $m,n \geq 1$, define a bilinear operation (called the {\em contraction operator}) $\iota : \mathcal{P}_m \times \mathcal{P}_n \rightarrow \mathcal{P}_{m+n-1}$, $(f, g) \mapsto \iota_g f$ by
        \begin{align*}
            \iota_g f := \sum_{i=1}^m (-1)^{(i-1)(n-1)}~ \! f \circ_i g, \text{ for } f \in \mathcal{P}_m, ~\! g \in \mathcal{P}_n.
        \end{align*}
    Then for any $f \in \mathcal{P}_m, ~\! g \in \mathcal{P}_n$ and $h \in \mathcal{P}_k$, we have
        \begin{align} \label{preLie}
            \iota_h \iota_g f - \iota_{\iota_h g} f = (-1)^{(n-1)(k-1)} ~ \! (\iota_g \iota_h f - \iota_{\iota_g h} f).
        \end{align}
    Consequently, one obtains the following well-known graded Lie algebra structure associated with a nonsymmetric operad \cite{gers-voro}.
    
    \begin{theorem}
        Let $\mathcal{P}$ be a nonsymmetric operad. Then the graded vector space $\displaystyle \mathcal{P}_{\bullet +1}:= \bigoplus_{n \geq 0} \mathcal{P}_{n+1}$ equipped with the bracket 
            \begin{align*}
                [f,g]_{\mathsf{GV}}:= \iota_f g- (-1)^{mn}~ \! \iota_g f,
            \end{align*}
        for $f \in \mathcal{P}_{m+1}, ~ \! g \in \mathcal{P}_{n+1}$ forms a graded Lie algebra. 
    \end{theorem}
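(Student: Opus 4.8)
The plan is to deduce the theorem from the graded pre-Lie identity \eqref{preLie} recorded just above the statement, by reinterpreting it as saying that $\mathcal{P}_{\bullet+1}$ carries a graded pre-Lie product whose graded commutator is exactly $[~,~]_{\mathsf{GV}}$. Throughout I adopt the degree convention that an element of $\mathcal{P}_{m+1}$ has degree $m$, so that $[~,~]_{\mathsf{GV}} : \mathcal{P}_{m+1} \times \mathcal{P}_{n+1} \to \mathcal{P}_{m+n+1}$ is degree-preserving in this shifted grading.

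First I would dispose of graded skew-symmetry, which is immediate and requires no operad axioms. Writing $[f,g]_{\mathsf{GV}} = \iota_f g - (-1)^{mn}\, \iota_g f$ and $[g,f]_{\mathsf{GV}} = \iota_g f - (-1)^{mn}\, \iota_f g$ for $f \in \mathcal{P}_{m+1}$ and $g \in \mathcal{P}_{n+1}$, one reads off directly that $[f,g]_{\mathsf{GV}} = -(-1)^{mn}\, [g,f]_{\mathsf{GV}}$.

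Next I would introduce the degree-zero product $f \cdot g := \iota_f g$ on $\mathcal{P}_{\bullet+1}$, so that $[f,g]_{\mathsf{GV}} = f \cdot g - (-1)^{mn}\, g \cdot f$ is its graded commutator. Specializing \eqref{preLie} to $f \in \mathcal{P}_{m+1}$, $g \in \mathcal{P}_{n+1}$, $h \in \mathcal{P}_{k+1}$ (of degrees $m,n,k$), the term $\iota_h \iota_g f$ becomes $h \cdot (g \cdot f)$ and $\iota_{\iota_h g} f$ becomes $(h \cdot g)\cdot f$, with the analogous reading on the other side; since $(N-1)(K-1) = nk$ after the index shift, \eqref{preLie} says precisely that the associator $(a,b,c) := (a \cdot b)\cdot c - a \cdot (b \cdot c)$ obeys the graded left-symmetry $(h,g,f) = (-1)^{nk}\,(g,h,f)$. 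In other words, $(\mathcal{P}_{\bullet+1}, \cdot)$ is a graded pre-Lie algebra.

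The only substantial step that remains is then the graded Jacobi identity for the commutator of a graded pre-Lie product. Here I would expand the Jacobiator $(-1)^{mk}\,[[f,g]_{\mathsf{GV}},h]_{\mathsf{GV}}$ together with its two cyclic companions into its twelve double-product terms, reorganize these into the associator expressions $(\,\cdot\,,\cdot\,,\cdot\,)$ with their accompanying Koszul signs, and cancel them in pairs using the left-symmetry established above. The hard part will be purely the sign bookkeeping: one must track the factors $(-1)^{mn}$ and their relatives produced by moving one contraction operator past another and by the cyclic permutation of $f,g,h$. I expect that, once the pre-Lie symmetry $(h,g,f) = (-1)^{nk}(g,h,f)$ is invoked, the twelve terms collapse to zero, yielding the graded Jacobi identity and hence the graded Lie algebra structure $(\mathcal{P}_{\bullet+1}, [~,~]_{\mathsf{GV}})$.
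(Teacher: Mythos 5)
Your proposal is correct and follows exactly the route the paper takes: the paper records the identity \eqref{preLie} and then asserts the theorem as a consequence, which is precisely your observation that \eqref{preLie} is the graded (left-)pre-Lie identity for the product $f \cdot g := \iota_f g$, whose graded commutator is $[~,~]_{\mathsf{GV}}$, together with the classical fact that the commutator of a graded pre-Lie product satisfies the graded Jacobi identity. The only difference is that the paper leaves this standard pre-Lie-to-Lie argument implicit (citing Gerstenhaber--Voronov), whereas you outline the sign bookkeeping needed to carry it out.
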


    The graded Lie algebra $(\mathcal{P}_{\bullet +1}, [~,~]_{\mathsf{GV}})$ is called the {\bf Gerstenhaber-Voronov algebra} associated with the nonsymmetric operad $\mathcal{P}$.
    When $\mathcal{P}$ is the endomorphism operad $\mathrm{End}_A$ associated with the vector space $A$, the Gerstenhaber-Voronov algebra associated with $\mathrm{End}_A$ is nothing but the Gerstenhaber's graded Lie algebra on the space of all multilinear maps on $A$ \cite{gers-ring}.

    \begin{definition} \cite{gers-voro}
        Let $\mathcal{P}$ be a nonsymmetric operad. An element $\pi \in \mathcal{P}_2$ is said to be a {\bf multiplication} on $\mathcal{P}$ if $\pi \circ_1 \pi = \pi \circ_2 \pi$ (equivalently, $\iota_\pi \pi =0).$
    \end{definition}
    Since the underlying field $\mathbf{k}$ has characteristics not equal to $2$, the condition in the above definition is equivalent to $[\pi, \pi]_{\mathsf{GV}}=0$. In other words, a multiplication $\pi$ is a Maurer-Cartan element in the Gerstenhaber-Voronov algebra $(\mathcal{P}_{\bullet +1}, [~,~]_{\mathsf{GV}})$ associated with the nonsymmetric operad $\mathcal{P}$. When $\mathcal{P}= \mathrm{End}_A$, a multiplication on $\mathcal{P}$ is an element $\pi \in (\mathrm{End}_A)_2 = \mathrm{Hom}(A^{\otimes 2},A)$ that satisfies $\pi \circ_1 \pi = \pi \circ_2 \pi$, where the partial compositions are given in (\ref{ass-circ}). Thus, if $\pi \in  \mathrm{Hom}(A^{\otimes 2},A)$ is graphically represented by the operation 
    \begin{tikzpicture}[scale=.25] \draw (0,0) -- (1,-1) node[below, right]{$\pi$};  \draw (1,-1) -- (2,0); \draw (1,-1) -- (1,-2);  \end{tikzpicture} having two inputs and one output then $\pi$ is a multiplication if and only if the following equality holds:
    \begin{center}
    \begin{tikzpicture}[scale=0.25]
        \draw (0,0) -- (2,-2) node[below, left]{$\pi$}; \draw (2,-2) -- (4,0) ; \draw (2,-2) -- (2,-4) ; \draw (2,0) -- (1,-1) node[below, left]{$\pi$}; \draw (4.65,-2) -- (5.35,-2); \draw (4.65, -2.35) -- (5.35, -2.35);
        \draw (6,0) -- (8,-2) node[below, right]{$\pi$};    \draw (8,-2) -- (10,0);     \draw (8,-2) -- (8,-4);     \draw (8,0) -- (9,-1) node[below, right]{$\pi$};
    \end{tikzpicture} .
   \end{center}
    In other words, any multiplication on the endomorphism operad $\mathrm{End}_A$ is equivalent to having an associative algebra structure on the vector space $A$.

    Like an associative algebra has a cohomology theory (the Hochschild cohomology) and a cup product operation on the cohomology, a multiplication on an arbitrary nonsymmetric operad also gives rise to similar structures. Let $\mathcal{P}$ be a nonsymmetric operad and $\pi$ be a multiplication on $\mathcal{P}$. For each $n \geq 1$, we define a map $\delta_{\pi} : \mathcal{P}_n \rightarrow \mathcal{P}_{n+1}$ by 
        \begin{align}\label{delta-pi}
            \delta_\pi (f) := - [\pi, f]_{\mathsf{GV}}, \text{ for } f \in \mathcal{P}_n.
        \end{align}
     Since $[\pi, \pi ]_{\mathsf{GV}}=0$, it turns out that $(\delta_\pi)^2 =0$. In other words, $\{ \mathcal{P}_\bullet , \delta_\pi  \}$ is a cochain complex. We define the corresponding cohomology groups by $H_\mathcal{P}^\bullet (\pi)$.

     \begin{remark}
    Let $\mathcal{P}$ be a nonsymmetric operad and $\pi$ be a multiplication on $\mathcal{P}$. A {\bf representation} of $\pi$ is a pair $(\pi^l, \pi^r)$ consisting of two elements $\pi^l, \pi^r \in \mathcal{P}_2$ that satisfy 
\begin{align*}
    \pi^l \circ_1 \pi = \pi^l \circ_2 \pi^l,  \qquad  \pi^r \circ_1 \pi^l = \pi^l \circ_2 \pi^r \quad  \text{ and } \quad \pi^r \circ_1 \pi^r = \pi^r \circ_2 \pi.
\end{align*}


\noindent Given a multiplication $\pi$ and a 
 representation $(\pi^l, \pi^r)$ of it, for each $n \geq 1$, one may define a map $\delta_{\pi; \pi^l, \pi^r} : \mathcal{P}_n \rightarrow \mathcal{P}_{n+1}$ by
\begin{align}\label{delta-pi-lr}
    \delta_{\pi; \pi^l, \pi^r}  (f) := (-1)^{n+1} ~\! \pi^r \circ_1 f + \pi^l \circ_2 f + \sum_{i=1}^n (-1)^i ~\! f \circ_i \pi, \text{ for } f \in \mathcal{P}_n.
\end{align}
Then it is easy to see that $(\delta_{\pi; \pi^l, \pi^r})^2 = 0$ and hence $\{ \mathcal{P}_\bullet , \delta_{\pi; \pi^l, \pi^r} \}$ is a cochain complex. The corresponding cohomology groups are denoted by $H^\bullet_\mathcal{P} (\pi; \pi^l, \pi^r)$ and are said to be the cohomologies associated with the multiplication $\pi$ with coefficients in the representation $(\pi^l, \pi^r)$. When $(\pi^l , \pi^r) = (\pi, \pi)$, the map $\delta_{\pi; \pi^l , \pi^r}$ coincides with the map $\delta_\pi$ given in (\ref{delta-pi}). Hence, in this case, the cohomology $H^\bullet_\mathcal{P} (\pi; \pi^l, \pi^r)$ coincides with $H^\bullet_\mathcal{P} (\pi)$.
\end{remark}

   Let $\mathcal{P}$ be a nonsymmetric operad and $\pi$ be a multiplication on $\mathcal{P}$. For any $m,n \geq 1$, one may also define a map (called the {\bf cup product}) $\cup_\pi: \mathcal{P}_m \times \mathcal{P}_n \rightarrow \mathcal{P}_{m+n}$ by 
        \begin{align}\label{cup-pro}
            f \cup_\pi g := (\pi \circ_2 g)\circ_1 f,  \text{ for } f \in \mathcal{P}_m, ~ \! g \in \mathcal{P}_n.
        \end{align}
    This cup product is graded associative in the sense that $(f \cup_\pi g)\cup_\pi h = f \cup_\pi (g \cup_\pi h)$, for $f \in \mathcal{P}_m, ~ \! g \in \mathcal{P}_n$ and $ h \in \mathcal{P}_k$.
 It is further easy to observe that the differential $\delta_\pi$ is a graded derivation for the cup product $\cup_\pi$. Therefore, the cup product $\cup_\pi$ induces a graded associative product on the graded space of cohomology $H_\mathcal{P}^\bullet (\pi)$.

\medskip

\section{Cup bracket}
Given a nonsymmetric operad with a multiplication $\pi$, here we consider the graded Lie bracket $[~,~]_\pi$ obtained by the skew-symmetrization of the cup-product operation (\ref{cup-pro}). We also consider the differential map $D$ associated with the multiplication $\pi$ with coefficients in the trivial representation. This differential map turns out to be a graded derivation for the bracket $[~,~]_\pi$ yielding a differential graded Lie algebra (dgLa). Finally, we use this dgLa to study maps that preserve a given multiplication.

\medskip

    Let $\mathcal{P}$ be a nonsymmetric operad and $\pi$ be a multiplication on $\mathcal{P}$. In the previous section, we have seen that $\pi$ induces a graded associative product $\cup_\pi$ on the graded space $\displaystyle \mathcal{P}_\bullet = \bigoplus_{n\geq 1} \mathcal{P}_n$. As a consequence, the graded space $\mathcal{P}_\bullet$ equipped with the bracket (called the {\bf cup bracket})
        \begin{align} \label{cup-pi}
            [f,g]_\pi :=  f \cup_\pi g- (-1)^{mn} ~ \! g \cup_\pi f, \text{ for } f \in \mathcal{P}_m, ~ \! g \in \mathcal{P}_n
        \end{align}
     forms a graded Lie algebra. This is called the {\bf cup Lie algebra}. In the following, we will formulate two other descriptions of the cup bracket (\ref{cup-pi}) that will be useful in forthcoming sections. As before, let $\mathcal{P}$ be a nonsymmetric operad and $\pi$ be a multiplication on $\mathcal{P}$. For any $n \geq 1$, we define a map $\theta : \mathcal{P}_n \rightarrow \mathcal{P}_{n+1}$ by 
        \begin{align} \label{theta}
            \theta (f):= - \iota_f \pi, \text{ for } f \in \mathcal{P}_n. 
        \end{align}
    In other words, for $f \in \mathcal{P}_n$, we have $ \theta(f) = -~\! \pi \circ_1 f + (-1)^{n} ~ \! \pi \circ_2 f$. With this notation, we have the following result.

    \begin{proposition} \label{pi-bracket}
        For any $f \in \mathcal{P}_m$ and $g \in \mathcal{P}_n$, we have
            \begin{align}
                [f,g]_\pi &= (-1)^n ~ \! \big( \iota_f (\theta g)- \theta (\iota_f g)   \big), \label{first}\\
                [f,g]_\pi &= \iota_f (\delta_\pi g)+ (-1)^{m-1} ~ \!  \iota_{\delta_\pi f} g + (-1)^m ~ \! \delta_\pi (\iota_f g). \label{second}
            \end{align}
    \end{proposition}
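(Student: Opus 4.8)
The plan is to prove both identities by rewriting everything in terms of the contraction operator $\iota$ and then exploiting the defining operad axioms of Definition \ref{nonsymm} together with the pre-Lie identity (\ref{preLie}). The two facts I will lean on throughout are the tautology $\theta(x) = -\iota_x \pi$ from (\ref{theta}), and the relation
\[
\delta_\pi(x) = -\,\iota_\pi x + (-1)^{n}\,\theta(x) \qquad (x \in \mathcal{P}_n),
\]
which follows immediately from (\ref{delta-pi}), the definition of the Gerstenhaber-Voronov bracket, and $\theta(x)=-\iota_x\pi$; note that $\iota_\pi x$ and $\theta(x)$ both lie in $\mathcal{P}_{n+1}$, so $\delta_\pi$ is homogeneous of the same degree as $\theta$.

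For (\ref{first}) I would first substitute $\theta(x)=-\iota_x\pi$ to reduce the right-hand side to a statement about iterated contractions:
\[
(-1)^n\big(\iota_f(\theta g) - \theta(\iota_f g)\big) = (-1)^{n+1}\big(\iota_f \iota_g \pi - \iota_{\iota_f g}\pi\big).
\]
The heart of the argument is then the purely operadic identity
\[
\iota_f \iota_g \pi - \iota_{\iota_f g}\pi = (-1)^{n-1}\, f\cup_\pi g + (-1)^{n(m-1)}\, g\cup_\pi f,
\]
which I would establish by expanding $\iota_g\pi = \pi\circ_1 g + (-1)^{n-1}\pi\circ_2 g$, applying $\iota_f$, and evaluating each composite $(\pi\circ_1 g)\circ_i f$ and $(\pi\circ_2 g)\circ_i f$ through the two axioms in Definition \ref{nonsymm}. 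The key observation is that the extreme-index terms produce exactly the two cup products: the $i=1$ term of $\iota_f(\pi\circ_2 g)$ is $(\pi\circ_2 g)\circ_1 f = f\cup_\pi g$ by (\ref{cup-pro}), while the top-index term of $\iota_f(\pi\circ_1 g)$ becomes $(\pi\circ_2 f)\circ_1 g = g\cup_\pi f$ via the second (disjoint) axiom. Every interior term, of the form $\pi\circ_1(g\circ_i f)$ or $\pi\circ_2(g\circ_i f)$, is matched and cancelled against the corresponding term in the expansion of $\iota_{\iota_f g}\pi$; a short check shows the signs $(-1)^{(i-1)(m-1)}$ align so that these cancellations are exact. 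Feeding this identity into the previous display and simplifying gives $f\cup_\pi g - (-1)^{mn} g\cup_\pi f = [f,g]_\pi$, which is (\ref{first}).

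For (\ref{second}) I would build directly on (\ref{first}). Substituting $\delta_\pi(x) = -\iota_\pi x + (-1)^{|x|}\theta(x)$ into the three terms of the right-hand side, using that $\iota$ is linear in its homogeneous subscript slot, the two terms $(-1)^n\iota_f(\theta g)$ and $-(-1)^n\theta(\iota_f g)$ assemble into $(-1)^n(\iota_f(\theta g)-\theta(\iota_f g)) = [f,g]_\pi$ by (\ref{first}). It then remains to show that the leftover contributions vanish, namely
\[
-\,\iota_f \iota_\pi g \;-\; (-1)^{m-1}\iota_{\iota_\pi f}g \;+\; \iota_{\iota_f \pi}g \;-\; (-1)^m \iota_\pi \iota_f g \;=\; 0,
\]
where the third summand comes from rewriting the remaining $-\iota_{\theta f}g = \iota_{\iota_f\pi}g$. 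This is where the pre-Lie identity (\ref{preLie}) enters: applying it after the substitution $(f,g,h)\mapsto(g,\pi,f)$ yields $\iota_f \iota_\pi g - \iota_{\iota_f\pi}g = (-1)^{m-1}(\iota_\pi \iota_f g - \iota_{\iota_\pi f}g)$, and inserting this into the leftover combination produces a complete sign cancellation.

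The main obstacle is the index-and-sign bookkeeping in the operadic identity underpinning (\ref{first}): one must carefully distinguish the case where $f$ is inserted into a slot inherited from $g$ (governed by the nested axiom) from the case where it lands in a slot inherited from $\pi$ (governed by the disjoint axiom), and then verify that all interior terms cancel in pairs so that only the two cup products survive. Once that identity is secured, (\ref{second}) reduces to a short computation powered by (\ref{preLie}). I also remark that neither identity in fact uses the multiplication condition $\iota_\pi\pi=0$; both are valid for an arbitrary element $\pi\in\mathcal{P}_2$, the hypothesis being needed only so that $[~,~]_\pi$ is a genuine graded Lie bracket.
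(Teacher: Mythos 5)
Your proposal is correct and follows essentially the same route as the paper's proof: for (\ref{first}) you expand the contractions and apply the two operad axioms so that the extreme-index terms produce the two cup products while the interior terms assemble into $\iota_{\iota_f g}\pi$ (equivalently $\theta(\iota_f g)$), and for (\ref{second}) you expand $\delta_\pi$ as $-\iota_\pi(\cdot)$ plus a signed $\theta(\cdot)$, cancel the four leftover terms via the pre-Lie identity (\ref{preLie}) with the substitution you indicate, and conclude by (\ref{first}). Your closing observation that the multiplication condition $\iota_\pi \pi = 0$ is never used in either identity is also accurate.
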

    \begin{proof}
        We observe that
        \begin{align*}
            \iota_f (\theta g) &= \sum_{i=1}^{n+1} (-1)^{(i-1)(m-1)}~ \! (\theta g ) \circ_i f\\
            &= - \sum_{i=1}^{n+1}(-1)^{(i-1)(m-1)}~\!(\pi \circ_1 g)\circ_i f +(-1)^{n}~\!\sum_{i=1}^{n+1}(-1)^{(i-1)(m-1)}~\!(\pi \circ_2 g)\circ_i f\\
            &= - \sum_{i=1}^{n}(-1)^{(i-1)(m-1)} ~\! (\pi \circ_1 g)\circ_i f- (-1)^{n(m-1)} ~ \! (\pi \circ_1 g)\circ_{n+1}f
              + (-1)^{n} ~ \! (\pi \circ_2 g)\circ_1 f \\ & \quad  + (-1)^{n}\sum_{i=2}^{n+1}(-1)^{(i-1)(m-1)} ~ \! (\pi \circ_2 g)\circ_i f\\
            &= -\sum_{i=1}^{n}(-1)^{(i-1)(m-1)} ~ \! \pi \circ_1 (g\circ_i f) - (-1)^{n(m-1)} ~ \! (\pi \circ_2 f)\circ_1 g + (-1)^{n} ~ \! (\pi \circ_2 g)\circ_1 f \\  & \quad  + (-1)^{n}\sum_{i=1}^{n}(-1)^{i(m-1)} ~ \! (\pi \circ_2 g)\circ_{i+1} f\\
            &= -\bigg( \sum_{i=1}^{n}(-1)^{(i-1)(m-1)} ~ \! \pi \circ_1 (g\circ_i f) - (-1)^{n}\sum_{i=1}^{n}(-1)^{i(m-1)} ~ \! \pi \circ_2 (g\circ_{i} f) \bigg) +(-1)^n~\!  (\pi \circ_2 g)\circ_1 f\\ & \quad 
              - (-1)^n (-1)^{mn} ~ \! (\pi \circ_2 f)\circ_1 g \\
            &= -\big( \pi \circ_1 \iota_f g + (-1)^{m+n-2} ~ \!  \pi \circ_2 \iota_f g   \big) + (-1)^n ~ \! [f,g]_\pi
            = \theta (\iota_f g) + (-1)^n ~ \! [f,g]_\pi.
        \end{align*} 
        Hence, the identity (\ref{first}) follows. On the other hand, we have
        \begin{align*}
            &\iota_f (\delta_\pi g)+ (-1)^{m-1} ~ \! \iota_{\delta_\pi f} g + (-1)^m ~ \! \delta_\pi (\iota_f g)\\
            &= -\iota_f [\pi, g]_\mathsf{GV} - (-1)^{m-1} ~ \! \iota_{[\pi, f]_\mathsf{GV}} g + (-1)^m ~ \! [\pi , \iota_f g]_\mathsf{GV} \\
            &= - \iota_f \iota_\pi g + (-1)^{n-1}~ \! \iota_f \iota_g \pi - (-1)^{m-1} ~ \! \iota_{\iota_\pi f}g + \iota_{\iota_f \pi} g -(-1)^m ~ \! \iota_\pi \iota_f g+ (-1)^m (-1)^{m+n-2} ~ \! \iota_{\iota_f g}\pi \\
            &= (-1)^n ~ \! (-\iota_f \iota_g \pi + \iota_{\iota_f g}\pi) + \Big(\iota_{\iota_f \pi}g - \iota_f \iota_\pi g -(-1)^{m-1} ~ \! (\iota_{\iota_\pi f} g - \iota_\pi \iota_f g)\Big)\\
            &\stackrel{(\ref{preLie})}{=} (-1)^n ~ \! (\iota_f \theta g - \theta (\iota_f g)) = [f,g]_\pi.
        \end{align*}
        This proves the identity (\ref{second}) and completes the proof.
    \end{proof}

    \begin{remark}
        Note that the differential $\delta_\pi$ is a graded derivation for the cup product $\cup_\pi$, which in turn implies that $\delta_\pi$ is also a graded derivation for the cup bracket $[~,~]_\pi$. That is, $(\mathcal{P}_\bullet , [~,~]_\pi , \delta_\pi)$ is a differential graded Lie algebra. This, in particular, implies that the cup bracket $[~,~]_\pi$ induces a graded Lie bracket on the graded space of cohomology $H_\pi ^\bullet (\mathcal{P})$. However, it follows from the identity (\ref{second}) that the induced graded Lie bracket at the level of cohomology is trivial. Equivalently, it means that the induced cup product at the level of cohomology is graded commutative.
    \end{remark}

    Given a multiplication $\pi$, we can also define a map $D :\mathcal{P}_n \rightarrow \mathcal{P}_{n+1}  $ (for $n \geq 1$) by 
        \begin{align} \label{derivation-map}
            D(f) = - \iota_\pi f,  \text{ for } f \in \mathcal{P}_n.
        \end{align}
        It follows from (\ref{delta-pi-lr}) that the map $D$ is the coboundary operator associated to the multiplication $\pi$ with coefficients in the trivial representation $(\pi^l , \pi^r) = (0, 0)$. Hence, $D$ is a differential. Alternatively, we can directly see that $ D^2(f)= \iota_\pi \iota_\pi f \stackrel{(\ref{preLie})}{=} \iota_{\iota_\pi \pi} f = 0 $. Moreover, we have the following result.

    \begin{proposition}
        The map $D$ is a graded derivation for the cup bracket, i.e., 
            \begin{align*}
                D[f,g]_\pi = [D(f),g]_\pi + (-1)^m ~ \! [f, D(g)]_\pi , ~ \text{for } f \in \mathcal{P}_m, ~ \! g \in \mathcal{P}_n.
            \end{align*}
    \end{proposition}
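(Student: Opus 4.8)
The plan is to deduce the derivation property for the bracket from the corresponding statement for the cup product. Since $[~,~]_\pi$ is by definition (\ref{cup-pi}) the graded commutator of the graded associative product $\cup_\pi$, it suffices to establish that $D$ is a graded derivation of $\cup_\pi$, that is,
\begin{align*}
    D(f \cup_\pi g) = D(f) \cup_\pi g + (-1)^m ~\! f \cup_\pi D(g), \quad \text{for } f \in \mathcal{P}_m, ~\! g \in \mathcal{P}_n.
\end{align*}
Granting this, the Leibniz rule for $[~,~]_\pi$ will follow by a purely formal antisymmetrization of the one for $\cup_\pi$.

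First I would prove the displayed Leibniz rule for $\cup_\pi$ directly. Writing $f \cup_\pi g = (\pi \circ_2 g) \circ_1 f \in \mathcal{P}_{m+n}$ as in (\ref{cup-pro}) and recalling from (\ref{derivation-map}) that $D = -\iota_\pi$, I expand
\begin{align*}
    D(f \cup_\pi g) = -\iota_\pi\big( (\pi \circ_2 g) \circ_1 f \big) = -\sum_{i=1}^{m+n} (-1)^{i-1} ~\! \big( (\pi \circ_2 g) \circ_1 f \big) \circ_i \pi.
\end{align*}
The key observation is that the first $m$ slots of $f \cup_\pi g$ are exactly the slots of $f$, while slots $m+1, \ldots, m+n$ are those of $g$. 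Using the two associativity axioms of Definition \ref{nonsymm}, I refactor each summand according to where $\pi$ lands: for $1 \le i \le m$ one gets $\big((\pi \circ_2 g) \circ_1 f\big)\circ_i \pi = (\pi \circ_2 g) \circ_1 (f \circ_i \pi)$, and for $i = m+j$ with $1 \le j \le n$ the second axiom (applicable since $1 < 1+j$) moves $\pi$ past $f$ to give $\big((\pi \circ_2 g) \circ_1 f\big)\circ_{m+j} \pi = \big(\pi \circ_2 (g \circ_j \pi)\big) \circ_1 f$. Summing over the two ranges and pulling $(\pi \circ_2 g)\circ_1(-)$ out of the first block and $(\pi \circ_2 (-))\circ_1 f$ out of the second, the two inner sums collapse to $\iota_\pi f$ and $\iota_\pi g$; together with the overall minus sign these become $D(f)$ and $D(g)$, and the extra factor $(-1)^m$ from the shift $i = m+j$ appears in front of the second term. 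This yields precisely the Leibniz rule above.

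Finally, I substitute this into $D[f,g]_\pi = D(f \cup_\pi g) - (-1)^{mn} D(g \cup_\pi f)$ and compare term by term with the expansion of $[D(f), g]_\pi + (-1)^m [f, D(g)]_\pi$, remembering that $D(f) \in \mathcal{P}_{m+1}$ and $D(g) \in \mathcal{P}_{n+1}$ carry shifted degrees. I expect the entire difficulty to be concentrated in two places, both routine but error-prone: justifying the two refactorings through the exact index shifts in the operad axioms, and the sign chase in the antisymmetrization, where the interlocking signs $(-1)^{(m+1)n}$, $(-1)^{m(n+1)}$ and $(-1)^m$ produced by the degree shifts must cancel against those from the Leibniz rule. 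Once the Leibniz rule for $\cup_\pi$ is established, the conceptual content is exhausted and only bookkeeping remains. (As a cross-check, one could instead run the computation through the description (\ref{first}), reducing everything to the pre-Lie identity (\ref{preLie}) with $h=\pi$ together with the multiplication condition $\iota_\pi \pi = 0$, but the cup-product route is cleaner.)
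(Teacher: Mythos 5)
Your proposal is correct, and its computational core is the same as the paper's: the paper also expands $-\iota_\pi$ of the cup products as sums over $\circ_i \pi$, splits the index range at $i=m$, and refactors each summand through the two operad axioms exactly as you describe (your sign bookkeeping and the two refactorings $((\pi \circ_2 g)\circ_1 f)\circ_i \pi = (\pi \circ_2 g)\circ_1 (f \circ_i \pi)$ and $((\pi \circ_2 g)\circ_1 f)\circ_{m+j}\pi = (\pi \circ_2 (g\circ_j \pi))\circ_1 f$ all check out, the latter using the second axiom followed by the first). The difference is organizational: the paper runs this computation directly on the bracket, so it processes both commutator terms $f\cup_\pi g$ and $g \cup_\pi f$ simultaneously inside one long chain of equalities, whereas you factor the argument through the stronger intermediate statement that $D$ is a graded derivation of $\cup_\pi$ itself, and then obtain the bracket statement by the standard formal fact that a derivation of a graded associative product is a derivation of its graded commutator (the signs $(-1)^{(m+1)n}$ and $(-1)^{m(n+1)}$ do cancel as you anticipate, since $D$ has degree $+1$). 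Your route does the hard computation once instead of twice and isolates a lemma of independent interest; it also mirrors the paper's own treatment of $\delta_\pi$, which is declared a derivation of the cup bracket in the remark preceding this proposition precisely because it is a derivation of the cup product. What the paper's version buys is self-containment: it never needs to state or sign-check the antisymmetrization principle separately. Either way the proposition is fully established.
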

    \begin{proof}
    Using the definitions of the map $D$ and the bracket $[~,~]_\pi$, we expand the LHS of the above expression. This gives us 
        \begin{align*} 
            & D[f,g]_\pi = -\iota_\pi [f,g]_\pi 
            = -\iota_\pi (f \cup_\pi g) + (-1)^{mn} ~ \! \iota_\pi (g \cup_\pi f) \nonumber \\
            &= -\sum_{i=1}^{m+n} (-1)^{i-1} ~ \! ((\pi \circ_2 g) \circ_1 f) \circ_i \pi + (-1)^{mn}~\! \sum_{i=1}^{m+n} (-1)^{i-1} ~ \! ((\pi \circ_2 f) \circ_1 g) \circ_i \pi  \nonumber \\
            &= -\sum_{i=1}^{m} (-1)^{i-1} ~ \! ((\pi \circ_2 g) \circ_1 f) \circ_i \pi -\sum_{i=m+1}^{m+n} (-1)^{i-1} ~ \! ((\pi \circ_2 g) \circ_1 f) \circ_i \pi \nonumber\\
            &\quad + (-1)^{mn}~\! \sum_{i=1}^{n} (-1)^{i-1} ~ \! ((\pi \circ_2 f) \circ_1 g) \circ_i \pi +(-1)^{mn} \sum_{i=n+1}^{n+m} (-1)^{i-1} ~ \! ((\pi \circ_2 f) \circ_1 g) \circ_i \pi  \nonumber \\
            &= -\sum_{i=1}^{m} (-1)^{i-1} ~ \! ((\pi \circ_2 g) \circ_1 f) \circ_i \pi - (-1)^m ~\!\sum_{i=1}^{n} (-1)^{i-1} ~ \! ((\pi \circ_2 g) \circ_1 f) \circ_{m+i} \pi \nonumber \\
            & \quad + (-1)^{mn}~\! \sum_{i=1}^{n} (-1)^{i-1} ~ \! ((\pi \circ_2 f) \circ_1 g) \circ_i \pi +(-1)^{(m+1)n}~\! \sum_{i=1}^{m} (-1)^{i-1} ~ \!  ((\pi \circ_2 f) \circ_1 g) \circ_{n+i} \pi \nonumber\\
            &= - \sum_{i=1}^m (-1)^{i-1} ~ \! (\pi \circ_2 g) \circ_1 (f \circ_i \pi) -(-1)^m ~\!\sum_{i=1}^n (-1)^{i-1} ~ \! (\pi \circ_2 (g \circ_i \pi))\circ_1 f \nonumber \\ & \quad +(-1)^{mn}~\! \sum_{i=1}^{n} (-1)^{i-1} ~ \! (\pi \circ_2 f)\circ_1 (g \circ_i \pi) +(-1)^{(m+1)n}~\! \sum_{i=1}^{m} (-1)^{i-1} ~ \!  (\pi \circ_2 (f \circ_i \pi))\circ_1 g \nonumber \\
            &= -(\pi \circ_2 g) \circ_1 \iota_\pi f - (-1)^m ~\!(\pi \circ_2 \iota_\pi g)\circ_1f + (-1)^{mn}~\! (\pi \circ_2 f) \circ_1 \iota_\pi g +(-1)^{(m+1)n}~\! (\pi \circ_2 \iota_\pi f) \circ_1 g \nonumber \\
            & = -\iota_\pi f \cup_\pi g + (-1)^{n(m+1)} ~ \!  g \cup_\pi \iota_\pi f + (-1)^m ~ \! \big( - f \cup_\pi \iota_\pi g + (-1)^{m(n+1)} ~ \! \iota_\pi g \cup_\pi f \big) \nonumber \\
            &= [D(f),g]_\pi + (-1)^m ~ \!  [f,D(g)]_\pi.
           \end{align*}
        Hence the proof.
        \end{proof}

    The above proposition shows that the triple $(\mathcal{P}_\bullet, [~,~]_\pi, D)$ is also a differential graded Lie algebra.

    \begin{remark}
        From the definitions of the maps $\delta_\pi, ~ \! \theta$ and $D$, it is clear that
        \begin{align} \label{delta-D-theta}
            \delta_\pi (f) = D(f) +(-1)^n ~ \!  \theta (f),  \text{ for } f \in \mathcal{P}_n.
        \end{align}
         Since $\delta_\pi$ and $D$ are both graded derivations for the bracket $[~,~]_\pi$, the map $\theta$ satisfies the following derivation-like property:
            \begin{align} \label{theta-pi}
                \theta [f,g]_\pi = (-1)^n ~ \! [\theta f,g]_\pi + [f, \theta g]_\pi,  \text{ for } f \in \mathcal{P}_m, ~ \! g \in \mathcal{P}_n.
            \end{align}
    \end{remark}

    \begin{definition}\label{preserve}
        Let $\mathcal{P}$ be a nonsymmetric operad and $\pi, ~\! \pi'$ be two multiplications on $\mathcal{P}$. A {\bf map} from $\pi$ to $\pi'$ is an element $\varphi \in \mathcal{P}_1$ satisfying $ ~\varphi \circ_1 \pi = (\pi'\circ_2 \varphi) \circ_1 \varphi$. An element $\varphi \in \mathcal{P}_1$ is said to preserve the multiplication $\pi$ if $\varphi$ is a map from $\pi$ to itself.
    \end{definition}
    

    Let $\mathcal{P}= \mathrm{End}_A$ be the endomorphism operad associated to the vector space $A$ and let $\pi,~ \! \pi'$ be two multiplications on $\mathcal{P}$. That is, they define associative algebra structures on the space $A$. Note that an element $\varphi \in (\mathrm{End}_A)_1 = \mathrm{Hom} (A, A)$ is a map from $\pi$ to $\pi'$ in the sense of Definition \ref{preserve} if and only if $\varphi: A \rightarrow A$ is an associative algebra homomorphism from $(A, \pi)$ to $(A, \pi')$.

    \begin{proposition} \label{MC-multiplication}
        Let $\mathcal{P}$ be a nonsymmetric operad and $\pi$ be a multiplication on it. Then an element $\varphi \in \mathcal{P}_1$ preserves the multiplication $\pi$ if and only if $\varphi$ is a Maurer-Cartan element in the differential graded Lie algebra $(\mathcal{P}_\bullet , [~,~]_\pi, D)$.
    \end{proposition}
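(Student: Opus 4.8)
The plan is to verify the statement by a direct unwinding of the Maurer--Cartan equation. Recall that in the differential graded Lie algebra $(\mathcal{P}_\bullet, [~,~]_\pi, D)$, where an element of $\mathcal{P}_n$ carries degree $n$, an element $\varphi \in \mathcal{P}_1$ (which therefore sits in degree $1$) is a Maurer--Cartan element precisely when
\begin{align*}
    D(\varphi) + \tfrac{1}{2}\,[\varphi, \varphi]_\pi = 0 .
\end{align*}
Both summands land in $\mathcal{P}_2$, so this is an equation in degree $2$. My strategy is simply to compute $D(\varphi)$ and $\tfrac12[\varphi,\varphi]_\pi$ explicitly for $\varphi \in \mathcal{P}_1$ and to recognise the resulting identity as the condition $\varphi \circ_1 \pi = (\pi \circ_2 \varphi)\circ_1 \varphi$ of Definition \ref{preserve} (taken with $\pi' = \pi$).

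For the first term, I would use the definition (\ref{derivation-map}) of $D$ together with the formula for the contraction operator. Since $\varphi \in \mathcal{P}_1$, the sum defining $\iota_\pi \varphi$ collapses to a single term, giving $\iota_\pi \varphi = \varphi \circ_1 \pi$ and hence $D(\varphi) = -\,\varphi \circ_1 \pi$. For the second term, the graded skew-symmetry sign $(-1)^{mn}$ equals $-1$ when $m = n = 1$, so the two cup-product summands in (\ref{cup-pi}) reinforce, yielding $[\varphi,\varphi]_\pi = 2\,(\varphi \cup_\pi \varphi)$; thus $\tfrac12[\varphi,\varphi]_\pi = \varphi \cup_\pi \varphi = (\pi \circ_2 \varphi)\circ_1 \varphi$ by the definition (\ref{cup-pro}) of the cup product.

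Assembling these two computations, the Maurer--Cartan equation becomes
\begin{align*}
    -\,\varphi \circ_1 \pi + (\pi \circ_2 \varphi)\circ_1 \varphi = 0 ,
\end{align*}
which holds if and only if $\varphi \circ_1 \pi = (\pi \circ_2 \varphi)\circ_1 \varphi$, i.e. if and only if $\varphi$ preserves the multiplication $\pi$. This establishes both implications at once. The proof is essentially a bookkeeping exercise with no genuine obstacle; the only points requiring care are getting the grading convention right (so that $\varphi \in \mathcal{P}_1$ really is a degree-$1$ element) and checking that the factor $\tfrac12$ in the Maurer--Cartan equation is correctly cancelled by the factor $2$ arising from the collapse of the skew-symmetrised cup bracket at equal arguments.
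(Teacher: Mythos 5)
Your proof is correct and is essentially identical to the paper's own argument: both unwind the Maurer--Cartan equation by computing $D(\varphi) = -\iota_\pi\varphi = -\varphi\circ_1\pi$ and $\tfrac12[\varphi,\varphi]_\pi = \varphi\cup_\pi\varphi = (\pi\circ_2\varphi)\circ_1\varphi$, and then recognize the result as the defining identity of Definition \ref{preserve}. The sign and grading checks you flag (the collapse of $\iota_\pi\varphi$ to a single term and the cancellation of $\tfrac12$ against the factor $2$ from $[\varphi,\varphi]_\pi$) are exactly the points the paper's one-line computation relies on.
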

    \begin{proof}
        We observe that
        \begin{align*}
            D(\varphi) + \frac{1}{2}[\varphi, \varphi]_\pi
            = -\iota_\pi \varphi + \frac{1}{2}(\varphi \cup_\pi \varphi + \varphi \cup_\pi \varphi)
            = - \varphi \circ_1 \pi + (\pi \circ_2 \varphi)\circ_1 \varphi.
        \end{align*}
        This shows that $\varphi$ is a Maurer-Cartan element  (i.e., $D(\varphi) + \frac{1}{2}[\varphi, \varphi]_\pi=0$) if and only if $\varphi$ preserves the multiplication $\pi$.
    \end{proof}

    Let $\varphi \in \mathcal{P}_1$ be an element that preserves the multiplication $\pi$. It follows from the above proposition that $\varphi$ induces a differential $D_\varphi :\mathcal{P}_n \rightarrow \mathcal{P}_{n+1}$ (for $n\geq 1$) given by 
        \begin{align*}
            D_\varphi(f) = D(f) + [\varphi,f]_\pi, \text{ for } f \in \mathcal{P}_n.
        \end{align*}

    Then we have the following result.
    \begin{proposition}
        With the above notations, the triple $(\mathcal{P}_\bullet , [~,~]_\pi , D_\varphi)$ is also a differential graded Lie algebra. Moreover, for any element $\varphi' \in \mathcal{P}_1$, the sum $\varphi + \varphi'$ preserves the multiplication $\pi$ if and only if $\varphi'$ is a Maurer-Cartan element in the differential graded Lie algebra $(\mathcal{P}_\bullet, [~,~]_\pi, D_\varphi)$.
    \end{proposition}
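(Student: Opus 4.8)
The plan is to treat this as the standard twisting of a differential graded Lie algebra by a Maurer--Cartan element. By Proposition \ref{MC-multiplication}, the hypothesis that $\varphi$ preserves $\pi$ is precisely the Maurer--Cartan equation
\[
D(\varphi) + \frac{1}{2}[\varphi, \varphi]_\pi = 0,
\]
and I would invoke this relation at the two crucial points below. Throughout I would use that $\varphi \in \mathcal{P}_1$ has degree $1$ and that $[~,~]_\pi$ satisfies the graded Jacobi identity (being the cup Lie algebra bracket (\ref{cup-pi})).

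For the first assertion I would first observe that $D_\varphi = D + [\varphi, -]_\pi$ is again a degree $+1$ graded derivation of $[~,~]_\pi$: the map $D$ is such a derivation by the previous proposition, while $[\varphi, -]_\pi$ is a degree $+1$ graded derivation directly from the graded Jacobi identity (since $|\varphi|=1$), and a sum of two graded derivations of the same degree is again one. The substantive point is $D_\varphi^2 = 0$. Expanding
\[
D_\varphi^2(f) = D^2 f + [\varphi, Df]_\pi + D[\varphi, f]_\pi + [\varphi, [\varphi, f]_\pi]_\pi,
\]
I would use $D^2 = 0$, the derivation identity $D[\varphi, f]_\pi = [D\varphi, f]_\pi - [\varphi, Df]_\pi$ (the sign arising from $|\varphi|=1$) to cancel the two middle terms, and the Jacobi-derived identity $[\varphi, [\varphi, f]_\pi]_\pi = \frac{1}{2}[[\varphi, \varphi]_\pi, f]_\pi$. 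Collecting terms gives
\[
D_\varphi^2(f) = \big[\, D\varphi + \tfrac{1}{2}[\varphi, \varphi]_\pi,\, f \,\big]_\pi,
\]
which vanishes by the Maurer--Cartan equation. Hence $(\mathcal{P}_\bullet, [~,~]_\pi, D_\varphi)$ is a differential graded Lie algebra.

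For the second assertion I would appeal again to Proposition \ref{MC-multiplication}: the element $\varphi + \varphi'$ preserves $\pi$ if and only if it is a Maurer--Cartan element of $(\mathcal{P}_\bullet, [~,~]_\pi, D)$, that is,
\[
D(\varphi + \varphi') + \frac{1}{2}[\varphi + \varphi', \varphi + \varphi']_\pi = 0.
\]
Expanding by bilinearity and using graded skew-symmetry (which for two degree $1$ elements gives $[\varphi, \varphi']_\pi = [\varphi', \varphi]_\pi$), the two cross terms combine into $[\varphi, \varphi']_\pi$, while the summand $D\varphi + \frac{1}{2}[\varphi, \varphi]_\pi$ drops out by the Maurer--Cartan equation for $\varphi$. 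What remains is
\[
D\varphi' + [\varphi, \varphi']_\pi + \frac{1}{2}[\varphi', \varphi']_\pi = 0,
\]
and since $D\varphi' + [\varphi, \varphi']_\pi = D_\varphi(\varphi')$ by the definition of $D_\varphi$, this is exactly the Maurer--Cartan equation for $\varphi'$ in the twisted differential graded Lie algebra $(\mathcal{P}_\bullet, [~,~]_\pi, D_\varphi)$.

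No step presents a genuine obstacle; the delicate part is purely the sign bookkeeping. The two places to watch are verifying that $[\varphi, -]_\pi$ is a derivation of degree $+1$ (not $0$), and confirming that the factor $\frac{1}{2}$ appearing in $[\varphi, [\varphi, f]_\pi]_\pi = \frac{1}{2}[[\varphi, \varphi]_\pi, f]_\pi$ matches the one in the Maurer--Cartan equation, so that the twist by $\varphi$ squares to zero precisely when $\varphi$ is Maurer--Cartan.
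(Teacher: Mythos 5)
Your proposal is correct and follows essentially the same route as the paper: both establish $D_\varphi^2(f) = \big[ D\varphi + \tfrac{1}{2}[\varphi,\varphi]_\pi, f \big]_\pi = 0$ via the derivation property of $D$ and the graded Jacobi identity, and both prove the second claim by expanding $D(\varphi+\varphi') + \tfrac{1}{2}[\varphi+\varphi', \varphi+\varphi']_\pi$ and identifying the remainder with $D_\varphi(\varphi') + \tfrac{1}{2}[\varphi',\varphi']_\pi$. The only cosmetic difference is that you cite the general fact that a sum of degree $+1$ derivations is a derivation, where the paper verifies the derivation property of $D_\varphi$ by an explicit Jacobi-identity cancellation.
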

    \begin{proof}
        For any $f\in \mathcal{P}_m$ and $g \in \mathcal{P}_n$, we have
        \begin{align*}
            D_\varphi^2(f)&= D_\varphi (D(f) + [\varphi , f]_\pi) \nonumber\\
            &= D^2(f) + D[\varphi, f]_\pi + [\varphi, D(f)]_\pi + [\varphi , [\varphi, f]_\pi]_\pi\nonumber\\
            &= [D(\varphi),f]_\pi + \frac{1}{2}[[\varphi, \varphi]_\pi, f]_\pi \quad (\text{\small by graded derivation of $D$ and graded Jacobi identity of $[~,~]_\pi$})\nonumber\\
            &= [D(\varphi)+\frac{1}{2}[\varphi, \varphi]_\pi, f]_\pi 
            = 0 \quad (\text{using Proposition \ref{MC-multiplication}})
        \end{align*}
        and
        \begin{align*} 
            &D_\varphi [f,g]_\pi - [D_\varphi (f), g]_\pi - (-1)^m ~ \!  [f, D_\varphi (g)]_\pi \nonumber\\
            &= D[f,g]_\pi +[\varphi, [f,g]_\pi]_\pi - [D(f),g]_\pi- [[\varphi,f]_\pi , g]_\pi -(-1)^m ~ \! [f,D(g)]_\pi - (-1)^m ~ \! [f,[\varphi, g]_\pi]_\pi \nonumber \\
            &= [\varphi, [f,g]_\pi]_\pi - [[\varphi, f]_\pi, g]_\pi - (-1)^m ~ \! [f,[\varphi, g]_\pi]_\pi = 0 \quad (\text{by graded Jacobi identity of $[~,~]_\pi$}).
        \end{align*}
        This shows that $(\mathcal{P}_\bullet , [~,~]_\pi , D_\varphi)$ is a differential graded Lie algebra.
For the second part, we observe that 
        \begin{align*}
        D (\varphi + \varphi') + \frac{1}{2} [ \varphi + \varphi' ~ \! , ~ \! \varphi + \varphi']_\pi
            =&~ D (\varphi) +D (\varphi' ) + \frac{1}{2}\big( 
 [\varphi, \varphi]_\pi + 2 ~ \! [\varphi, \varphi']_\pi + [\varphi', \varphi']_\pi \big)\\
            =&~  D (\varphi') + [\varphi, \varphi']_\pi +\frac{1}{2}[\varphi',\varphi']_\pi\\
            =&~ D_\varphi (\varphi') + \frac{1}{2} [\varphi', \varphi']_\pi.
        \end{align*}
        This shows that $\varphi + \varphi'$ is a Maurer-Cartan element in the differential graded Lie algebra $(\mathcal{P}_\bullet , [~,~]_\pi, D)$ if and only if $\varphi'$ is a Maurer-Cartan element in $(\mathcal{P}_\bullet , [~,~]_\pi, D_\varphi)$. Hence, the result follows.
    \end{proof}

\begin{remark}
    Let $\varphi \in \mathcal{P}_1$ preserves the multiplication $\pi$. Then we have seen that $D_\varphi$ is a differential, i.e., \{$\mathcal{P}_\bullet, D_\varphi$\} is a cochain complex. The corresponding cohomology groups are denoted by $H^\bullet_{\mathcal{P}, \pi} (\varphi )$, and are called the cohomology groups of $\varphi$. On the other hand, we can define two elements $\pi^l , \pi^r \in \mathcal{P}_2$ by $\pi^l := \pi \circ_1 \varphi$  and $\pi^r := \pi \circ_2 \varphi$, respectively. Then it is easy to see that the pair $(\pi^l, \pi^r)$ is a representation of the multiplication $\pi$. Moreover, the coboundary map associated with the multiplication $\pi$ with coefficients in the above-representation $(\pi^l, \pi^r)$ coincides with the map $D_\varphi$. Hence, the cohomology of $\varphi$ can be viewed as the cohomology of the multiplication $\pi$ with coefficients in the induced representation $(\pi^l, \pi^r)$.
    \end{remark}

\medskip

\section{Nijenhuis elements and the Fr\"olicher-Nijenhuis bracket}
Given a nonsymmetric operad with a multiplication $\pi$, here we first consider Nijenhuis elements for the multiplication $\pi$ as a generalization of Nijenhuis operators on associative algebras \cite{grab}.
We show that a Nijenhuis element induces a hierarchy of pairwise compatible Nijenhuis elements generalizing a well-known result about classical Nijenhuis operators. Next, we define the Fr\"{o}licher-Nijenhuis bracket associated with a given multiplication $\pi$ in a nonsymmetric operad. The Maurer-Cartan elements of the Fr\"{o}licher-Nijenhuis bracket are precisely the Nijenhuis elements for the multiplication $\pi$. As a result, we define the cohomology associated with a Nijenhuis element.

    \begin{definition}
        Let $\mathcal{P}$ be a nonsymmetric operad and $\pi$ be a multiplication on $\mathcal{P}$. An element $N \in \mathcal{P}_1$ is said to be a {\bf Nijenhuis element} for the multiplication $\pi$ if 
            \begin{align}\label{nij-ele-iden}
                (\pi \circ_2 N)\circ_1 N = N \circ_1 (\pi \circ_1 N + \pi \circ_2 N - N \circ_1 \pi).
            \end{align}
    \end{definition}

    \begin{remark}
        Let $\mathcal{P}= \mathrm{End}_A$ be the endomorphism operad associated with the vector space $A$. Also, let $\pi \in \mathcal{P}_2 = (\mathrm{End}_A)_2$ be a multiplication on $\mathcal{P}$ representing the associative algebra structure $(A, ~\! \cdot ~\!)$ on the space $A$, where $\pi (a, b) = a \cdot b$, for all $a, b \in A$. Then an element $N \in \mathcal{P}_1 = (\mathrm{End}_A)_1 = \mathrm{Hom}(A, A)$ is a Nijenhuis element for the multiplication $\pi$ if and only if $N$ is a Nijenhuis operator on the associative algebra $(A, ~\! \cdot ~\! )$ \cite{grab}, i.e.,
        \begin{align*}
            N (a) \cdot N (b) = N ( N (a) \cdot b + a \cdot N (b) - N (a \cdot b)), \text{ for all } a, b \in A.
        \end{align*}
        Thus, a Nijenhuis element generalizes classical Nijenhuis operators on associative algebras.
    \end{remark}

Let $N$ be a Nijenhuis element for the multiplication $\pi$. For any $k \geq 1$, we set
\begin{align*}
    N^k := N \circ_1 N^{k-1} ~~~ (\text{where by convention } N^0 = \mathbbm{1}) ~~~ \text{ and } ~~~~ \pi_{N^k} := \pi \circ_1 N^k + \pi \circ_2 N^k - N^k \circ_1 \pi. 
\end{align*}

\begin{proposition}
    Let $N$ be a Nijenhuis element for the multiplication $\pi$. Then for any $k, l \geq 0$,
    \begin{align}\label{nkl}
     ( \pi_{N^l} \circ_2 N^k ) \circ_1 N^k =   N^k \circ_1 \pi_{N^{k+l}}.
    \end{align}
\end{proposition}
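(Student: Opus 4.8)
My plan is to deduce the identity (\ref{nkl}) from a single auxiliary relation for the powers of $N$, namely the claim that for all $p,q \geq 0$,
\begin{align*}
(\pi \circ_2 N^q) \circ_1 N^p = (N^q \circ_1 \pi) \circ_1 N^p + (N^p \circ_1 \pi) \circ_2 N^q - N^{p+q} \circ_1 \pi. \tag{$\star$}
\end{align*}
In the endomorphism operad $(\star)$ reads $N^p(a)\cdot N^q(b) = N^q(N^p(a)\cdot b) + N^p(a\cdot N^q(b)) - N^{p+q}(a\cdot b)$, the familiar relation satisfied by the powers of a Nijenhuis operator. Granting $(\star)$, the proposition follows by an elementary expansion. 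Substituting $\pi_{N^l} = \pi\circ_1 N^l + \pi\circ_2 N^l - N^l\circ_1\pi$ into the left-hand side of (\ref{nkl}) and repeatedly using the first associativity axiom of Definition \ref{nonsymm} (together with the second axiom to swap the two independent slots), one rewrites $(\pi_{N^l}\circ_2 N^k)\circ_1 N^k$ as $(\pi\circ_2 N^k)\circ_1 N^{l+k} + (\pi\circ_1 N^k)\circ_2 N^{l+k} - N^l\circ_1\big((\pi\circ_2 N^k)\circ_1 N^k\big)$. Applying $(\star)$ with $(p,q)=(l+k,k)$, with $(p,q)=(k,l+k)$, and (after pre-composing by $N^l\circ_1-$) with $(p,q)=(k,k)$ to these three genuinely bilinear terms, all the mixed terms cancel in pairs and one is left with $(N^k\circ_1\pi)\circ_1 N^{k+l} + (N^k\circ_1\pi)\circ_2 N^{k+l} - N^{2k+l}\circ_1\pi$, which is exactly $N^k\circ_1\pi_{N^{k+l}}$ after expanding by axiom~1.

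The real content is therefore $(\star)$, which I would prove by a nested induction on the exponents, with the defining Nijenhuis identity (\ref{nij-ele-iden}) as the seed. The cases $p=0$ or $q=0$ are immediate from $N^0=\mathbbm{1}$ and the unit axiom, and the case $(p,q)=(1,1)$ is precisely (\ref{nij-ele-iden}) once $N\circ_1\pi_N$ is expanded through axiom~1. I would first establish the column $q=1$ for all $p$ by induction on $p$: writing $N^{p+1}=N^p\circ_1 N$ and applying the inductive hypothesis $(\star)_{(p,1)}$ with its first input pre-composed by $N$, the term $N^p\circ_1\big((\pi\circ_2 N)\circ_1 N\big)$ appears, which (\ref{nij-ele-iden}) rewrites as $N^{p+1}\circ_1\pi_N$; the two resulting copies of $N^{p+1}\circ_1(\pi\circ_1 N)$ then cancel and leave exactly $(\star)_{(p+1,1)}$. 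With the column in hand I would fix $p$ and induct on $q$: to pass from $(\star)_{(p,q)}$ to $(\star)_{(p,q+1)}$ I apply $(\star)_{(p,1)}$ with its second input pre-composed by $N^q$, and then substitute the relation obtained by applying $N$ to the output of $(\star)_{(p,q)}$ (that is, forming $N\circ_1(-)$); a single term $N^{p+1}\circ_1(\pi\circ_2 N^q)$ cancels and the remainder assembles into $(\star)_{(p,q+1)}$.

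The main obstacle is organisational rather than conceptual. Each inductive step is a one-line substitution in the endomorphism-operad picture, but in an abstract nonsymmetric operad it must be realised entirely through the partial compositions, so the real care goes into translating the informal moves ``apply $N$ to both sides'' and ``feed $Na$ into the first argument'' into the correct operations $N^?\circ_1(-)$ and $(-)\circ_i N^?$, and into invoking the associativity axiom $f\circ_i(g\circ_j h)=(f\circ_i g)\circ_{i+j-1}h$ in the right direction at each occurrence, with the second axiom reserved for the few places where two independent slots must be interchanged. I expect the Nijenhuis condition (\ref{nij-ele-iden}) to be used only inside the induction establishing $(\star)$, and that once $(\star)$ is available the deduction of (\ref{nkl}) is purely formal.
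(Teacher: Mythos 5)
Your proposal is correct, but it takes a genuinely different route from the paper. The paper never isolates your polarized relation $(\star)$: it inserts the Nijenhuis identity (\ref{nij-ele-iden}) inside powers $N^l$ to produce a telescoping family of identities ((\ref{iden-22})--(\ref{iden-25})), compares them to extract the case $k=1$ of (\ref{nkl}), namely $(\pi_{N^l}\circ_2 N)\circ_1 N = N\circ_1 \pi_{N^{l+1}}$, and then gets general $k$ by iterating this $k=1$ identity inside $N^{k-1}\circ_1(-)$. You instead establish the symmetric two-parameter identity $(\star)$ --- the operadic form of the classical relation $N^p(a)\cdot N^q(b)=N^q(N^p(a)\cdot b)+N^p(a\cdot N^q(b))-N^{p+q}(a\cdot b)$ for powers of a Nijenhuis operator --- by a double induction seeded by (\ref{nij-ele-iden}), and then deduce (\ref{nkl}) by a purely formal expansion. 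Your cancellations check out: the three instances of $(\star)$ at $(l+k,k)$, $(k,l+k)$ and $(k,k)$ (the last pre-composed with $N^l\circ_1-$) do collapse to $(N^k\circ_1\pi)\circ_1 N^{k+l}+(N^k\circ_1\pi)\circ_2 N^{k+l}-N^{2k+l}\circ_1\pi = N^k\circ_1\pi_{N^{k+l}}$, and both inductive steps for $(\star)$ close up exactly as described; the deferred operadic bookkeeping only requires pre-/post-composition in fixed slots together with the two axioms of Definition \ref{nonsymm}, so it is routine. As for what each approach buys: the paper's argument is shorter, resting on a single iteration of the $k=1$ case; your $(\star)$ is a stronger and more symmetric statement, and it pays dividends downstream --- taking $p=q=k$ in $(\star)$ shows directly that $N^k$ is a Nijenhuis element, and summing $(\star)_{(k,l)}$ with $(\star)_{(l,k)}$ yields precisely the compatibility identity $N^k\circ_1\pi_{N^l}+N^l\circ_1\pi_{N^k}=(\pi\circ_2 N^l)\circ_1 N^k+(\pi\circ_2 N^k)\circ_1 N^l$, i.e.\ parts (1) and (3) of the theorem that follows this proposition, which the paper must prove separately.
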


\begin{proof}
    Note that the identity (\ref{nkl}) trivially holds for $k = 0$ and any $l \geq 0$. In the following, we will first prove it for $k=1$. Since $N$ is a Nijenhuis element for the multiplication $\pi$, the identity (\ref{nij-ele-iden}) holds. By inserting both sides of this identity inside $N^l$, we obtain
    \begin{align}\label{iden-22}
        N^{l+2} \circ_1 \pi - N^{l+1} \circ_1 (\pi \circ_2 N) = N^{l+1} \circ_1 (\pi \circ_1 N) - N^l \circ_1 ((\pi \circ_2 N) \circ_1 N).
    \end{align}
    Applying this identity repeatedly, we get
    \begin{align}\label{iden-23}
     N^{l+2} \circ_1 \pi - N^{l+1} \circ_1 (\pi \circ_2 N) = N \circ_1 (\pi \circ_1 N^{l+1}) - (\pi \circ_2 N) \circ_1 N^{l+1}.
    \end{align}
    Similarly,
    \begin{align}\label{iden-24}
        N^{l+2} \circ_1 \pi - N^{l+1} \circ_1 (\pi \circ_1 N) = N \circ_1 (\pi \circ_2 N^{l+1}) - (\pi \circ_2 N^{l+1}) \circ_1 N.
    \end{align}
    Hence comparing the identities (\ref{iden-22}) and (\ref{iden-24}), we get that 
    \begin{align}\label{iden-25}
        N^{l+1} \circ_1 (\pi \circ_2 N) - N^l \circ_1 ((\pi \circ_2 N) \circ_1 N) = N \circ_1 (\pi \circ_2 N^{l+1}) - (\pi \circ_2 N^{l+1}) \circ_1 N.
    \end{align}
    Again, comparing the element $N^{l+1} \circ_1 (\pi \circ_2 N)$ from the identities (\ref{iden-23}) and (\ref{iden-25}), we find
    \begin{align*}
        &N^{l+2} \circ_1 \pi - N \circ_1 (\pi \circ_1 N^{l+1}) + (\pi \circ_2 N) \circ_1 N^{l+1} \\
         & \qquad \qquad \qquad \qquad = N^l \circ_1 ( (\pi \circ_2 N) \circ_1 N) + N \circ_1 (\pi \circ_2 N^{l+1}) - (\pi \circ_2 N^{l+1}) \circ_1 N. 
    \end{align*}
    This can be equivalently expressed as 
    \begin{align} \label{n1l}
        (\pi_{N^l} \circ_2 N) \circ_1 N = N \circ_1 ( \pi \circ_1 N^{l+1} + \pi \circ_2 N^{l+1} - N^{l+1} \circ_1 \pi) 
    \end{align}
    which proves the identity (\ref{nkl}) for $k = 1$. Hence for arbitrary $k \geq 1$, 
    \begin{align*}
        N^k \circ_1 \pi_{N^{k+l}} = N^{k-1} \circ_1 (N \circ_1 \pi_{N^{k+l}}) =~& N^{k-1} \circ_1 (( \pi_{N^{k+l-1}} \circ_2 N) \circ_1 N ) \quad (\text{by } (\ref{n1l})) \\
        =~& (\pi_{N^l} \circ_2 N^k) \circ_1 N^k  \quad (\text{by repeating } (\ref{n1l})) .
    \end{align*}
    This proves the result.
\end{proof}

\begin{theorem}
    Let $N$ be a Nijenhuis element for the multiplication $\pi$.
    \begin{enumerate}
        \item For any $k \geq 0$, the element $N^k$ is a Nijenhuis element for the multiplication $\pi$.
        \item For any $k, l \geq 0$, the element $N^k$ is a Nijenhuis element for the deformed multiplication $\pi_{N^l}$. Moreover, the multiplications $(\pi_{N^l})_{N^k}$ and $\pi_{N^{k+l}}$ are the same.
        \item For any $k, l \geq 0$, the Nijenhuis elements $N^k$ and $N^l$ (for the multiplication $\pi$) are `compatible' in the sense that their sum is also a Nijenhuis element for the multiplication $\pi$.
    \end{enumerate}
\end{theorem}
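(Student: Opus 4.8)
My plan is to deduce all three parts from the master identity (\ref{nkl}), with a single ``mixed'' Nijenhuis identity as the only genuinely new ingredient. Part (1) is immediate: since $N^0 = \mathbbm{1}$, the unit axiom gives $\pi_{N^0} = \pi \circ_1 \mathbbm{1} + \pi \circ_2 \mathbbm{1} - \mathbbm{1} \circ_1 \pi = \pi$, so putting $l = 0$ in (\ref{nkl}) produces $(\pi \circ_2 N^k) \circ_1 N^k = N^k \circ_1 \pi_{N^k}$, which is precisely the defining identity (\ref{nij-ele-iden}) for $N^k$ to be a Nijenhuis element of $\pi$.

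For Parts (2) and (3) I would write $\mathcal{T}(P) := (\pi \circ_2 P) \circ_1 P - P \circ_1 \pi_P$ for $P \in \mathcal{P}_1$, so that $P$ is a Nijenhuis element exactly when $\mathcal{T}(P) = 0$. Expanding $(\pi_{N^l})_{N^k}$ with the operad axioms and the elementary relation $N^a \circ_1 N^b = N^{a+b}$, and then invoking Part (1) to discard $\mathcal{T}(N^k)$ and $\mathcal{T}(N^l)$, the difference simplifies to
\begin{equation*}
(\pi_{N^l})_{N^k} - \pi_{N^{k+l}} = \mathcal{T}(N^k + N^l).
\end{equation*}
Thus the ``moreover'' of Part (2) is \emph{equivalent} to $N^k + N^l$ being a Nijenhuis element, which is Part (3); and once this identity is known, the Nijenhuis condition for $N^k$ relative to $\pi_{N^l}$, namely $(\pi_{N^l} \circ_2 N^k) \circ_1 N^k = N^k \circ_1 (\pi_{N^l})_{N^k}$, reduces to (\ref{nkl}). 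Here $\pi_{N^l}$ is itself a multiplication, being the Nijenhuis deformation of $\pi$ along $N^l$; one checks $\pi_{N^l} \circ_1 \pi_{N^l} = \pi_{N^l} \circ_2 \pi_{N^l}$ directly from $\iota_\pi \pi = 0$ and (\ref{nij-ele-iden}), so that Part (2) is well posed. Everything therefore reduces to showing $\mathcal{T}(N^k + N^l) = 0$.

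By Part (1) we have $\mathcal{T}(N^k) = \mathcal{T}(N^l) = 0$, so $\mathcal{T}(N^k + N^l)$ is purely the mixed cross term in $N^k$ and $N^l$; its vanishing is the one step that cannot be formal and must invoke the Nijenhuis hypothesis --- the main obstacle. I would obtain it from the two-parameter identity
\begin{equation*}
(\pi \circ_2 N^q) \circ_1 N^p + N^{p+q} \circ_1 \pi = (N^p \circ_1 \pi) \circ_2 N^q + (N^q \circ_1 \pi) \circ_1 N^p \qquad (p, q \geq 0),
\tag{$\star$}
\end{equation*}
which in the endomorphism model reads $N^p(a) \cdot N^q(b) + N^{p+q}(a \cdot b) = N^p(a \cdot N^q(b)) + N^q(N^p(a) \cdot b)$; applying ($\star$) for the pairs $(k,l)$ and $(l,k)$ cancels the cross term, giving $\mathcal{T}(N^k + N^l) = 0$.

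It remains to prove ($\star$). The cases $p = 0$ or $q = 0$ are the unit axiom, and $p = q = 1$ is (\ref{nij-ele-iden}). I would argue by a nested induction in the style of the proof of (\ref{nkl}): first fix $q = 1$ and induct on $p$, pre-composing the case-$p$ identity with $N$ in the first slot and then inserting (\ref{nij-ele-iden}) pushed through $N^p$ to reach the case $p+1$; then, with $q = 1$ in hand, induct on $q$, pre-composing the case-$q$ identity with $N$ in the second slot and feeding in the $q = 1$ case to reach $q+1$. This settles ($\star$), hence $\mathcal{T}(N^k + N^l) = 0$, which is Part (3), the equivalent identity $(\pi_{N^l})_{N^k} = \pi_{N^{k+l}}$, and, via (\ref{nkl}), the remaining assertion of Part (2). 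The entire difficulty is concentrated in the two inductions proving ($\star$); the signs are harmless because every $N^k$ lies in $\mathcal{P}_1$, and the only care needed is in organising the inductions.
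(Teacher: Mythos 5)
Your proposal is correct, but for parts (2) and (3) it takes a genuinely different route from the paper. The paper's proof of (2) rests on the one-step claim $(\pi_{N^l})_N = \pi_{N^{l+1}}$, verified by a direct expansion using the Nijenhuis identity, after which (\ref{nkl}) gives that $N$ (and then $N^k$, by replacement) is Nijenhuis for $\pi_{N^l}$; part (3) is then a separate five-line computation that assumes without loss of generality $k \geq l$ and combines (\ref{nkl}) with part (1). You instead concentrate everything in the two-parameter cross identity $(\star)$, proved by a double induction directly from (\ref{nij-ele-iden}), and exploit the algebraic identity $(\pi_{N^l})_{N^k} - \pi_{N^{k+l}} = \mathcal{T}(N^k + N^l)$ (which I checked: the cross terms match exactly, and the two specializations $(p,q)=(k,l)$, $(l,k)$ of $(\star)$ do cancel them). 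Your double induction also closes as described: substituting $a \mapsto Na$ in the case-$p$ identity and inserting $N^p$ applied to (\ref{nij-ele-iden}) yields the case $p+1$, and symmetrically in $q$. What the paper's route buys is brevity — the proposition (\ref{nkl}) already carries the inductive burden, so no new induction is needed; what your route buys is symmetry (no $k \geq l$ case split), an explicit logical equivalence between the ``moreover'' clause of (2) and part (3), the stronger polarized identity $(\star)$ (of independent interest, being the full mixed-power Nijenhuis compatibility), and the care to verify that $\pi_{N^l}$ is itself a multiplication — a point the paper only settles in a later proposition via $[\pi_N,\pi_N]_{\mathsf{GV}} = \delta_\pi [N,N]_{\mathsf{FN}} = 0$.
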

    
\begin{proof}
    (1) It follows from the identity (\ref{nkl}) by taking $l = 0$.

    \medskip

    (2) To prove this part, we first claim that $(\pi_{N^l})_N  = \pi_{N^{l+1}}$, for any $l \geq 0$. Note that it trivially holds for $l=0$. However, for $l \geq 1$, we observe that
    \begin{align*}
        &(\pi_{N^l})_N =\pi_{N^l} \circ_1 N + \pi_{N^l} \circ_2 N - N \circ_1 \pi_{N^l} \\
        &= (\pi \circ_1 N^l + \pi \circ_2 N^l - N^l \circ_1 \pi) \circ_1 N +  (\pi \circ_1 N^l + \pi \circ_2 N^l - N^l \circ_1 \pi) \circ_2 N - (\pi_{N^{l-1}} \circ_2 N) \circ_1 N \\
        &= \pi \circ_1 N^{l+1} + (\pi \circ_2 N^l) \circ_1 N - N^l \circ_1 (\pi \circ_1 N) + (\pi \circ_2 N) \circ_1 N^l + \pi \circ_2 N^{l+1} - N^l \circ_1 (\pi \circ_2 N) \\
        & \qquad \qquad - (\pi_{N^{l-1}} \circ_2 N) \circ_1 N \\
        &= \big(  \pi \circ_1 N^{l+1} + \pi \circ_2 N^{l+1} - N^{l+1} \circ_1 \pi \big) - N^l \circ_1 \big(  \pi \circ_1 N + \pi \circ_2 N - N \circ_1 \pi   \big) + N^{l-1} \circ_1 ( (\pi \circ_2 N) \circ_1 N) \\
        &= \pi_{N^{l+1}} - N^{l-1} \circ_1 \big(\underbrace{ N \circ_1 \pi_N - (\pi \circ_2 N) \circ_1 N  }_{= 0}  \big) = \pi_{N^{l+1}}.
    \end{align*}
Using \eqref{nkl}, we get
    \begin{align*}
        (\pi_{N^l} \circ_2 N) \circ_1 N = N \circ_1 \pi_{N^{l+1}}= N \circ_1 (\pi_{N^l})_N,
    \end{align*}
    which proves that $N$ is a Nijenhuis element for the multiplication $\pi_{N^l}$. Replacing $N$ by the Nijenhuis element $N^k$ in the above identity, we get that $N^k$ is also Nijenhuis element for the multiplication $\pi_{N^l}$ and $(\pi_{N^l})_{N^k}=\pi_{N^{k+l}}$.
\medskip

    (3) For any $k \geq l$, we observe that
    \begin{align*}
        &N^k \circ_1 \pi_{N^l} + N^l \circ_1 \pi_{N^k} \\
        &= N^{k-l} \circ_1 ( (\pi \circ_2 N^l) \circ_1 N^l) +  (\pi_{N^{k-l}} \circ_2 N^l) \circ_1 N^l \\
        &= N^{k-l} \circ_1 ( (\pi \circ_2 N^l) \circ_1 N^l) + \big( ( \pi \circ_1 N^{k-l} + \pi \circ_2 N^{k-l} - N^{k-l} \circ_1 \pi) \circ_2 N^l  \big) \circ_1 N^l\\
        &= N^{k-l} \circ_1 ( (\pi \circ_2 N^l) \circ_1 N^l) + (\pi \circ_2 N^l) \circ_1 N^k + (\pi \circ_2 N^k) \circ_1 N^l - (N^{k-l} \circ_1 (\pi \circ_2 N^l)) \circ_1 N^l\\
        &= (\pi \circ_2 N^l) \circ_1 N^k + (\pi \circ_2 N^k) \circ_1 N^l.
    \end{align*}
    This is equivalent to the condition that the sum $N^k + N^l$ is a Nijenhuis element for the multiplication $\pi$.
\end{proof}

     In the following, we construct a graded Lie algebra whose Maurer-Cartan elements are Nijenhuis elements for a given multiplication $\pi$. We start with the following important result.

    \begin{theorem} \label{rho-theorem}
        Let $\mathcal{P}$ be a nonsymmetric operad and $\pi$ be a multiplication. For each $m \geq 0$ and $n \geq 1$, we define a map
        \begin{align} \label{rho-map}
            \rho : \mathcal{P}_{m+1} \times \mathcal{P}_n \rightarrow \mathcal{P}_{m+n} ~~ \text{ by } ~~   (f,\phi) \mapsto \rho(f)\phi := \iota_f \phi,
        \end{align}
        for $f \in \mathcal{P}_{m+1}$ and $\phi \in \mathcal{P}_n$. Then the map $\rho$ satisfies the following two conditions:
            \begin{align}
                \rho ([f,g]_{\mathsf{GV}}) (\phi) &= \rho (f) \rho (g) \phi - (-1)^{mn} ~ \! \rho (g) \rho (f) \phi, \label{action_1}\\
                \rho(f) [\phi , \psi]_{\pi} &= [ \rho (f) \phi , \psi]_\pi + (-1)^{mk}~ \! [\phi , \rho (f) \psi]_\pi \label{action_2},
            \end{align}
        for the elements $f \in \mathcal{P}_{m+1}, ~ \! g \in \mathcal{P}_{n+1},~ \! \phi \in \mathcal{P}_k$ and $\psi \in \mathcal{P}_l$.
    \end{theorem}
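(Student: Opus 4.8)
The two displayed identities assert, respectively, that $\rho$ is a morphism of graded Lie algebras (identity (\ref{action_1})) and that it acts on the cup Lie algebra $(\mathcal{P}_\bullet, [~,~]_\pi)$ by graded derivations (identity (\ref{action_2})). I would treat them separately, since the first is formal while the second carries the content.

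For (\ref{action_1}) I would simply expand. Since $[f,g]_{\mathsf{GV}} = \iota_f g - (-1)^{mn}\iota_g f$, the left-hand side is $\iota_{\iota_f g}\phi - (-1)^{mn}\iota_{\iota_g f}\phi$, while the right-hand side is $\iota_f\iota_g\phi - (-1)^{mn}\iota_g\iota_f\phi$. The equality of these two expressions is exactly the pre-Lie identity (\ref{preLie}), read with the outer base element taken to be $\phi\in\mathcal{P}_k$ and the two contracted arguments played by $g\in\mathcal{P}_{n+1}$ and $f\in\mathcal{P}_{m+1}$; the sign $(-1)^{(\deg g-1)(\deg f-1)}=(-1)^{nm}$ produced by (\ref{preLie}) matches the required $(-1)^{mn}$ after rearrangement. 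Thus (\ref{action_1}) needs no new computation.

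The substance lies in (\ref{action_2}). My plan is to prove the stronger statement that $\iota_f$ is a graded derivation of degree $m$ for the graded associative cup product itself, namely
\[
\iota_f(\phi\cup_\pi\psi) = (\iota_f\phi)\cup_\pi\psi + (-1)^{mk}\,\phi\cup_\pi(\iota_f\psi),
\]
and then to deduce (\ref{action_2}) by skew-symmetrization. To establish this I would expand $\iota_f(\phi\cup_\pi\psi) = \iota_f\big((\pi\circ_2\psi)\circ_1\phi\big)$ directly, using $\iota_f(\cdot)=\sum_i (-1)^{(i-1)m}(\cdot)\circ_i f$. The key structural observation is that in $(\pi\circ_2\psi)\circ_1\phi$ the two inputs of $\pi$ are already saturated by $\phi$ and $\psi$, so every slot $i$ belongs either to $\phi$ (slots $1,\dots,k$) or to $\psi$ (slots $k+1,\dots,k+l$); in particular $f$ is never inserted into a ``$\pi$-slot'', so no correction term involving $\pi$ appears. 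Splitting the sum accordingly, the first operad axiom handles the $\phi$-slots and reassembles them into $(\pi\circ_2\psi)\circ_1(\iota_f\phi)=(\iota_f\phi)\cup_\pi\psi$, while the commutation (second) operad axiom, used to transport the insertion of $f$ past the already-inserted $\phi$, turns the $\psi$-slots into $(-1)^{mk}\,\big(\pi\circ_2(\iota_f\psi)\big)\circ_1\phi = (-1)^{mk}\,\phi\cup_\pi(\iota_f\psi)$, the prefactor $(-1)^{mk}$ arising from shifting the summation index past the $k$ slots of $\phi$. Once the cup-product derivation identity holds, (\ref{action_2}) is routine commutator bookkeeping: writing $[~,~]_\pi$ as the graded commutator of $\cup_\pi$ and substituting, the four resulting products regroup---after recording $\deg(\iota_f\phi)=k+m$ and $\deg(\iota_f\psi)=l+m$ to collect signs---precisely into $[\iota_f\phi,\psi]_\pi + (-1)^{mk}[\phi,\iota_f\psi]_\pi$.

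I expect the only delicate point to be the slot-tracking and sign bookkeeping in the $\psi$-part of the first step, where one must invoke the commutation axiom $(F\circ_i G)\circ_{j+n-1}H=(F\circ_j H)\circ_i G$ to move $f$ across $\phi$ and carefully account for the index shift by $k=\deg\phi$ that generates the sign $(-1)^{mk}$; everything else is formal. As a consistency check, the special case $f=\pi$ recovers the fact already used in the excerpt that $D=-\iota_\pi$ is a graded derivation of $[~,~]_\pi$, so the general statement is the natural common extension of that result.
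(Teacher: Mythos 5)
Your proof is correct, and for identity (\ref{action_1}) it coincides exactly with the paper's argument: expand $[f,g]_{\mathsf{GV}}$ and invoke the pre-Lie identity (\ref{preLie}) with $\phi$ as the base element; the sign $(-1)^{nm}$ comes out as you describe. For identity (\ref{action_2}), however, your organization differs from the paper's in a meaningful way. The paper expands $\iota_f[\phi,\psi]_\pi$ wholesale: both cup-product terms $(\pi\circ_2\psi)\circ_1\phi$ and $-(-1)^{kl}(\pi\circ_2\phi)\circ_1\psi$ are hit with $\iota_f$ simultaneously, the resulting $2(k+l)$ slot-insertions are split into $\phi$-slots and $\psi$-slots, and the two operad axioms reassemble everything directly into $[\iota_f\phi,\psi]_\pi+(-1)^{mk}[\phi,\iota_f\psi]_\pi$. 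You instead isolate the cleaner intermediate statement that $\iota_f$ is a graded derivation of the cup product itself, $\iota_f(\phi\cup_\pi\psi)=(\iota_f\phi)\cup_\pi\psi+(-1)^{mk}\,\phi\cup_\pi(\iota_f\psi)$, prove that by the same slot-splitting mechanism (your use of the two operad axioms and the origin of the sign $(-1)^{mk}$ from the index shift past the $k$ slots of $\phi$ is exactly right), and then pass to the bracket by formal commutator bookkeeping; I verified the sign collection in that last step and it closes correctly, using $(-1)^{mk+k(l+m)}=(-1)^{kl}$. What your route buys is modularity and a genuinely stronger conclusion: the derivation property for $\cup_\pi$ implies the one for $[~,~]_\pi$ but not conversely, and, as you note, specializing $f=\pi$ recovers the paper's separate proposition that $D=-\iota_\pi$ is a graded derivation of the cup bracket (whose proof in the paper is yet another instance of the same expansion), so your lemma would let the paper prove that proposition and Theorem \ref{rho-theorem} in one stroke. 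What the paper's route buys is a single self-contained computation with no auxiliary statement to record. The computational core — slot decomposition, the associativity axiom for the $\phi$-slots, the commutation axiom $(F\circ_i G)\circ_{j+n-1}H=(F\circ_j H)\circ_i G$ for transporting $f$ past $\phi$ — is identical in both.
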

    \begin{proof}
        We have
        \begin{align*}
            \rho ([f,g]_{\mathsf{GV}}) (\phi) &= \iota_{[f,g]_\mathsf{GV}}\phi
            = \iota_{\iota_f g} \phi - (-1)^{mn} ~ \!  \iota_{\iota_g f} \phi
            \stackrel{(\ref{preLie})}{=} \iota_f \iota_g \phi - (-1)^{mn} ~ \! \iota_f \iota_g \phi\\
            &= \rho (f) \rho (g) \phi - (-1)^{mn}\rho (g) \rho (f) \phi.
        \end{align*}
        Hence, the identity (\ref{action_1}) follows. To prove (\ref{action_2}), we start by expanding the LHS by direct calculations
        \begin{align*} 
            &\rho (f)[\phi, \psi]_\pi \\
            &= \iota_f \big((\pi \circ_2 \psi)\circ_1 \phi - (-1)^{kl}~\!(\pi \circ_2 \phi)\circ_1 \psi\big) \\
            &= \sum_{i=1}^{k+l} (-1)^{(i-1)m}((\pi \circ_2 \psi)\circ_1 \phi) \circ_i f - (-1)^{kl}~\!\sum_{i=1}^{k+l}(-1)^{(i-1)m}((\pi \circ_2 \phi)\circ_1 \psi)\circ_i f \\
            &= \sum_{i=1}^{k} (-1)^{(i-1)m}((\pi \circ_2 \psi)\circ_1 \phi)\circ_i f +\sum_{i=k+1}^{k+l} (-1)^{(i-1)m}((\pi \circ_2 \psi)\circ_1 \phi)\circ_i f  \\
            & \quad - (-1)^{kl} ~\! \sum_{i=1}^{l}(-1)^{(i-1)m}((\pi \circ_2 \phi)\circ_1 \psi)\circ_i f - (-1)^{kl}~\! \sum_{i=l+1}^{l+k}(-1)^{(i-1)m}((\pi \circ_2 \phi)\circ_1 \psi)\circ_i f \\
            &= \sum_{i=1}^{k} (-1)^{(i-1)m}((\pi \circ_2 \psi)\circ_1 \phi)\circ_i f + (-1)^{km}~\! \sum_{i=1}^l (-1)^{(i-1)m}((\pi \circ_2 \psi)\circ_1\phi)\circ_{k+i} f  \\
            & \quad - (-1)^{kl}~\! \sum_{i=1}^l (-1)^{(i-1)m}((\pi \circ_2 \phi)\circ_1 \psi) \circ_i f - (-1)^{l(k+m)}~\! \sum_{i=1}^k (-1)^{(i-1)m}((\pi \circ_2 \phi)\circ_1 \psi)\circ_{l+i} f  \\
            &= \sum_{i=1}^k (-1)^{(i-1)m}(\pi \circ_2 \psi) \circ_1 (\phi \circ_i f)  + (-1)^{km} ~\! \sum_{i=1}^l (-1)^{(i-1)m}(\pi \circ_2 (\psi \circ_i f)) \circ_1 \phi  \\ 
            & \quad  - (-1)^{kl}~\! \sum_{i=1}^l (-1)^{(i-1)m} (\pi \circ_2 \phi) \circ_1 (\psi \circ_i f) - (-1)^{l(k+m)}~\!\sum_{i=1}^k (-1)^{(i-1)m}~\! (\pi \circ_2 (\phi \circ_i f))\circ_1 \psi\\
            &=  (\pi \circ_2 \psi) \circ_1 \iota_f \phi  +(-1)^{mk}~\! \big( (\pi \circ_2 \iota_f \psi) \circ_1 \phi - (-1)^{k(m+l)}~\! (\pi \circ_2 \phi) \circ_1 \iota_f \psi \big) -(-1)^{l(k+m)}~\!(\pi \circ_2 \iota_f \phi)\circ_1 \psi\\
            &= [\iota_f \phi , \psi]_\pi + (-1)^{mk}~\! [\phi, \iota_f \psi]_\pi = [\rho(f) \phi , \psi]_\pi + (-1)^{mk}~\! [\phi , \rho(f) \psi]_\pi.
        \end{align*}
        Hence the proof.
    \end{proof}

    Recall that an action of a graded Lie algebra $(\mathcal{A}, [~,~]_{\mathcal{A}})$ on another graded Lie algebra $( \mathcal{B}, [~,~]_{\mathcal{B}})$ is given by a degree zero bilinear map $\rho : \mathcal{A} \times \mathcal{B} \rightarrow \mathcal{B}$, $(a,b) \mapsto \rho(a) b$ that satisfies
        \begin{align}
            \rho ([a, a']_\mathcal{A})(b) =~& \rho (a) \rho (a')(b) - (-1)^{|a||a'|} ~ \! \rho (a') \rho (a)(b), \label{action1}\\
            \rho (a) ([b, b']_\mathcal{B}) =~& [ \rho (a) b, b']_\mathcal{B} + (-1)^{|a||b|} ~ \! [b, \rho (a) b']_\mathcal{B}, \label{action2}
    \end{align}
    for all homogeneous elements $a,a' \in \mathcal{A}$ and $b, b' \in \mathcal{B}$. Thus, it follows from Theorem \ref{rho-theorem} that the map $\rho$ given by (\ref{rho-map})  defines an action of the Gerstenhaber-Voronov algebra $(\mathcal{P}_{\bullet +1}, [~,~]_{\mathsf{GV}})$ on the cup Lie algebra $(\mathcal{P}_\bullet , [~,~]_\pi)$. As a consequence, the graded vector space $\mathcal{P}_{\bullet +1} \oplus \mathcal{P}_\bullet$ carries a graded Lie algebra structure with the bracket 
        \begin{align} \label{semidirect-product}
            [(f, \phi), (g, \psi )]_\ltimes := \big( [f,g]_\mathsf{GV} ~ \!, ~ \! [\phi , \psi]_\pi + \iota_f \psi - (-1)^{m n}~ \! \iota_g \phi \big),
        \end{align}
    for $(f,\phi)\in \mathcal{P}_{m+1} \oplus \mathcal{P}_m$ and $(g, \psi) \in \mathcal{P}_{n+1} \oplus \mathcal{P}_n$. Additionally, the Gerstenhaber-Voronov algebra $(\mathcal{P}_{\bullet +1}, [~,~]_{\mathsf{GV}})$ is a graded Lie subalgebra and $\mathcal{P}_\bullet$ is an ideal of the graded Lie algebra $(\mathcal{P}_{\bullet +1}\oplus \mathcal{P}_\bullet , [~,~]_\ltimes)$.

    We are now in a position to define the Fr\"{o}licher-Nijenhuis bracket associated with a nonsymmetric operad endowed with multiplication. Let $\mathcal{P}$ be a nonsymmetric operad and $\pi$ be a multiplication. For each $m,n\geq 1$, we define a bracket $[~,~]_\mathsf{FN}:\mathcal{P}_m\times \mathcal{P}_n \rightarrow \mathcal{P}_{m+n}$ by
        \begin{align} \label{FN-bracket}
            [f,g]_\mathsf{FN}:=[f,g]_\pi + (-1)^m ~ \! \iota_{(\delta_\pi f)} g - (-1)^{(m+1)n} ~ \! \iota_{(\delta_\pi g)} f ,
        \end{align}
    for $f\in \mathcal{P}_m$ and  $g\in \mathcal{P}_n$. The bracket $[~,~]_\mathsf{FN}$ is called the {\bf Fr\"{o}licher-Nijenhuis bracket} associated to the nonsymmetric operad $\mathcal{P}$ with multiplication $\pi$. Using Proposition \ref{pi-bracket}, the Fr\"{o}licher-Nijenhuis bracket $[~,~]_\mathsf{FN}$ can be expressed as 
        \begin{align*}
            [f,g]_\mathsf{FN} &= \iota_f (\delta_\pi g)+ (-1)^{(m-1)} ~ \! \cancel{\iota_{\delta_\pi f}g }+ (-1)^m ~ \! \delta_\pi (\iota_f g) + (-1)^m ~ \! \cancel{\iota_{\delta_\pi f}g} - (-1)^{(m+1)n} ~ \! \iota_{\delta_\pi g}f  \\
            &= [f, \delta_\pi g]_\mathsf{GV} + (-1)^m ~ \! \delta_\pi (\iota_f g).
        \end{align*}
    Moreover, using the explicit form of the cup bracket $[~,~]_\pi$ and the contraction operator $\iota$ in the defining identity (\ref{FN-bracket}), we have
        \begin{align}\label{fn-partial}
            [f,g]_\mathsf{FN}=~& (\pi \circ_2 g) \circ_1 f - (-1)^{mn} ~ \! (\pi \circ_2 f) \circ_1 g + (-1)^m ~ \! \sum_{i=1}^n (-1)^{(i-1)m} ~ \! g \circ_i (\delta_\pi f)\\
            & \qquad - (-1)^{(m+1)n} ~ \! \sum_{i=1}^m (-1)^{(i-1)n} ~ \! f \circ_i (\delta_\pi g). \nonumber
        \end{align}

        \medskip

        For a nonsymmetric operad $\mathcal{P}$ with a multiplication $\pi$, the following result gives a connection between the Fr\"{o}licher-Nijenhuis bracket $[~,~]_\mathsf{FN}$ and the Gerstenhaber-Voronov bracket $[~,~]_\mathsf{GV}$.

    \begin{proposition}\label{prop-fn-gv}
        For any $f \in \mathcal{P}_m$ and $g \in \mathcal{P}_n$, we have
            \begin{align} \label{FN-GV}
                \delta_\pi [f,g]_\mathsf{FN} = [\delta_\pi f , \delta_\pi g ]_{\mathsf{GV}}.
            \end{align}
    \end{proposition}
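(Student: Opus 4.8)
The plan is to bypass the explicit partial-composition formula (\ref{fn-partial}) entirely and instead use the compact rewriting of the Fr\"olicher-Nijenhuis bracket obtained just above, namely
\[
    [f,g]_\mathsf{FN} = [f, \delta_\pi g]_\mathsf{GV} + (-1)^m \, \delta_\pi(\iota_f g),
\]
valid for $f \in \mathcal{P}_m$ and $g \in \mathcal{P}_n$. Applying $\delta_\pi$ to both sides, the second summand becomes $(-1)^m \, \delta_\pi^2 (\iota_f g)$, which vanishes because $\delta_\pi$ is a differential (recall $(\delta_\pi)^2 = 0$, a consequence of $[\pi,\pi]_\mathsf{GV}=0$). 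Thus the problem reduces to establishing $\delta_\pi [f, \delta_\pi g]_\mathsf{GV} = [\delta_\pi f, \delta_\pi g]_\mathsf{GV}$.

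The key structural fact I would invoke is that $\delta_\pi = -[\pi, -]_\mathsf{GV}$ is a graded derivation of degree $+1$ for the Gerstenhaber-Voronov bracket. This is purely formal: since $\pi \in \mathcal{P}_2$ has Gerstenhaber-Voronov degree $1$ and satisfies $[\pi,\pi]_\mathsf{GV}=0$, expanding $-[\pi, [a,b]_\mathsf{GV}]_\mathsf{GV}$ by the graded Jacobi identity of $[~,~]_\mathsf{GV}$ gives exactly $[\delta_\pi a, b]_\mathsf{GV} + (-1)^{|a|} [a, \delta_\pi b]_\mathsf{GV}$, where $|a|$ denotes the shifted (Gerstenhaber-Voronov) degree. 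I would record this as a one-line remark before the computation, since it is the only ingredient not literally displayed in the preceding text.

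Granting this, I apply the derivation property with $a = f$ (of shifted degree $m-1$) and $b = \delta_\pi g$, which yields
\[
    \delta_\pi [f, \delta_\pi g]_\mathsf{GV} = [\delta_\pi f, \delta_\pi g]_\mathsf{GV} + (-1)^{m-1} [f, \delta_\pi^2 g]_\mathsf{GV}.
\]
The final term dies again by $(\delta_\pi)^2 = 0$, and (\ref{FN-GV}) follows immediately. I do not anticipate any real obstacle here: the whole argument is driven by the two facts $\delta_\pi^2 = 0$ and that $\delta_\pi$ derives $[~,~]_\mathsf{GV}$. The only point demanding a little attention is the degree bookkeeping between the index on $\mathcal{P}_m$ and the Gerstenhaber-Voronov grading, so that the Koszul sign $(-1)^{m-1}$ is placed correctly; but since the term it decorates is annihilated by $\delta_\pi^2$, even this sign is ultimately immaterial to the conclusion.
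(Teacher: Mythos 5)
Your proposal is correct and follows essentially the same route as the paper's proof: both arguments reduce $\delta_\pi [f,g]_\mathsf{FN}$ to $\delta_\pi [f, \delta_\pi g]_\mathsf{GV}$ and then conclude from the fact that $\delta_\pi = -[\pi, -]_\mathsf{GV}$ is a graded derivation for $[~,~]_\mathsf{GV}$ together with $\delta_\pi^2 = 0$. The only cosmetic difference is that you invoke the compact formula $[f,g]_\mathsf{FN} = [f,\delta_\pi g]_\mathsf{GV} + (-1)^m \, \delta_\pi(\iota_f g)$ displayed just before the proposition, whereas the paper re-runs the same cancellation directly from the definition (\ref{FN-bracket}) and identity (\ref{second}).
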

    \begin{proof}
        By a direct calculation, we get
        \begin{align*}
            \delta_\pi [f,g]_\mathsf{FN} &= \delta_\pi \big( [f,g]_\pi + (-1)^m ~ \! \iota_{\delta_\pi f} g - (-1)^{(m+1 )n} ~ \! \iota_{\delta_\pi g}f \big)\\
            &= \delta_\pi (\iota_f (\delta_\pi g)) + (-1)^{m-1} ~ \! \cancel {\delta_\pi (\iota_{\delta_\pi f} g)} + (-1)^m ~ \! \cancel{\delta_\pi(\iota_{\delta_\pi f} g)} - (-1)^{(m+1 )n} ~ \! \delta_\pi ( \iota_{\delta_\pi g}f )\\
            &= \delta_\pi \big( \iota_f (\delta_\pi g) - (-1)^{(m+1)n} ~ \! \iota_{\delta_\pi g }f  \big)
            = \delta_\pi [f, \delta_\pi g]_{\mathsf{GV}}.
        \end{align*}
        Hence the result follows as $\delta_\pi = - [\pi, - ]_\mathsf{GV}$ is a graded derivation for the Gerstenhaber-Voronov bracket and $\delta_\pi^2 = 0.$
    \end{proof}

    \begin{theorem} \label{FN-gla}
        Let $\mathcal{P}$ be a nonsymmetric operad and $\pi$ be a multiplication on $\mathcal{P}$.
        \begin{enumerate}
            \item Then the Fr\"{o}licher-Nijenhuis bracket makes $(\mathcal{P}_\bullet, [~,~]_\mathsf{FN})$ into a graded Lie algebra, called the {\bf Fr\"{o}licher-Nijenhuis algebra}.
            \item Moreover, the differential map $\delta_\pi : \mathcal{P}_\bullet \rightarrow \mathcal{P}_{\bullet + 1}$ is a graded Lie algebra homomorphism from the Fr\"{o}licher-Nijenhuis algebra $(\mathcal{P}_\bullet , [~,~]_\mathsf{FN} )$ to the Gerstenhaber-Voronov algebra $(\mathcal{P}_{\bullet +1}, [~,~]_\mathsf{GV})$.
        \end{enumerate}
    \end{theorem}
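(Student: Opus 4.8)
The plan is to treat part (2) first, since it is essentially already available, and then to obtain part (1) by realising the Fr\"olicher-Nijenhuis algebra as a graded Lie subalgebra of the semidirect product $(\mathcal{P}_{\bullet+1}\oplus\mathcal{P}_\bullet, [~,~]_\ltimes)$ constructed just above via the action $\rho$ of Theorem \ref{rho-theorem}. For part (2), note that $\delta_\pi$ sends $\mathcal{P}_m$ (degree $m$ in the Fr\"olicher-Nijenhuis algebra) to $\mathcal{P}_{m+1}$ (degree $m$ in the Gerstenhaber-Voronov algebra), so it is a degree-preserving linear map between the relevant graded pieces. The identity $\delta_\pi[f,g]_\mathsf{FN} = [\delta_\pi f, \delta_\pi g]_\mathsf{GV}$ is precisely Proposition \ref{prop-fn-gv}. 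Hence $\delta_\pi$ is a homomorphism of graded Lie algebras, which is exactly the assertion of part (2).

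For part (1), the key step is to introduce the map $\Phi : \mathcal{P}_\bullet \to \mathcal{P}_{\bullet+1}\oplus\mathcal{P}_\bullet$ defined by $\Phi(f) := (\delta_\pi f, (-1)^m f)$ for $f \in \mathcal{P}_m$, and to prove that it intertwines the Fr\"olicher-Nijenhuis bracket with the semidirect product bracket:
\[
\Phi([f,g]_\mathsf{FN}) = [\Phi(f), \Phi(g)]_\ltimes .
\]
The first component of the right-hand side is $[\delta_\pi f, \delta_\pi g]_\mathsf{GV}$, which equals $\delta_\pi[f,g]_\mathsf{FN}$ by Proposition \ref{prop-fn-gv}, matching the first component of the left-hand side. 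For the second component I would substitute $\Phi(f)$ and $\Phi(g)$ into the defining formula (\ref{semidirect-product}) and collect the three resulting terms; the sign $(-1)^m$ inserted in the second slot is exactly what converts the coefficients $+\,\iota_{\delta_\pi f}g$ and $-(-1)^{mn}\iota_{\delta_\pi g}f$ produced by $[~,~]_\ltimes$ into the coefficients $(-1)^m\,\iota_{\delta_\pi f}g$ and $-(-1)^{(m+1)n}\iota_{\delta_\pi g}f$ appearing in the definition (\ref{FN-bracket}) of $[~,~]_\mathsf{FN}$. This is a short piece of sign bookkeeping once the twist $(-1)^m$ is in place.

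Since $\Phi$ is degree-preserving and manifestly injective (its second component recovers $f$ up to the sign $(-1)^m$), the intertwining identity shows that the image of $\Phi$ is closed under $[~,~]_\ltimes$, hence is a graded Lie subalgebra of the semidirect product, and $\Phi$ is an isomorphism of graded Lie algebras onto its image. Consequently the Fr\"olicher-Nijenhuis bracket inherits graded skew-symmetry and the graded Jacobi identity, establishing that $(\mathcal{P}_\bullet, [~,~]_\mathsf{FN})$ is a graded Lie algebra. (Graded skew-symmetry could alternatively be checked directly from (\ref{FN-bracket}) using the skew-symmetry of the cup bracket $[~,~]_\pi$; the substantive content is the Jacobi identity, which I expect to obtain cleanly only through $\Phi$.)

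The main obstacle I anticipate is pinning down the correct sign twist in the definition of $\Phi$: writing $\Phi(f) = (\delta_\pi f, \epsilon_m f)$ with an unknown sign $\epsilon_m$, the requirement that all three families of terms (on $[f,g]_\pi$, on $\iota_{\delta_\pi f}g$, and on $\iota_{\delta_\pi g}f$) match simultaneously forces $\epsilon_{m+n}=\epsilon_m\epsilon_n$ together with $\epsilon_m \equiv (-1)^m$, so $\epsilon_m=(-1)^m$ is the unique admissible choice; without it the second components fail to agree. Everything else is formal, relying only on the already-established facts that $[~,~]_\ltimes$ is a graded Lie bracket and that $\delta_\pi[f,g]_\mathsf{FN} = [\delta_\pi f, \delta_\pi g]_\mathsf{GV}$.
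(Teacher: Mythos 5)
Your proposal is correct and takes essentially the same route as the paper: part (2) is exactly Proposition \ref{prop-fn-gv}, and part (1) is obtained by embedding $\mathcal{P}_\bullet$ into the semidirect product $(\mathcal{P}_{\bullet+1}\oplus\mathcal{P}_\bullet, [~,~]_\ltimes)$ via an injective, sign-twisted graph of $\delta_\pi$ that intertwines the brackets, and then pulling back skew-symmetry and the Jacobi identity through injectivity. The only cosmetic difference is where the sign sits: the paper uses $\Theta(f)=((-1)^m\,\delta_\pi f,\, f)$ while you use $\Phi(f)=(\delta_\pi f,\,(-1)^m f)$; these differ by the automorphism $v\mapsto (-1)^{|v|}v$ of the graded Lie algebra, so both verifications are equivalent.
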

    \begin{proof}
        (1) From the expression (\ref{FN-bracket}), it is easy to see that $[~,~]_\mathsf{FN}$ is graded skew-symmetric. To prove the graded Jacobi identity of $[~,~]_\mathsf{FN}$, we consider the injective map  $\Theta : \mathcal{P}_\bullet \rightarrow \mathcal{P}_{\bullet +1} \oplus \mathcal{P}_\bullet $ of graded vector spaces by $\Theta(f)= ((-1)^m ~ \! \delta_\pi f ,  f)$, for $f \in \mathcal{P}_m$. It is important to note that the space $\mathcal{P}_{\bullet +1} \oplus \mathcal{P}_\bullet$ has a graded Lie algebra structure with the bracket $[~,~]_\ltimes$ given in (\ref{semidirect-product}).
        Then for any $f\in \mathcal{P}_m,~ \! g \in \mathcal{P}_n$ and $h \in \mathcal{P}_k$, we observe that
        \begin{align*}
            [\Theta(f), \Theta(g)]_\ltimes =~& [((-1)^m ~ \! \delta_\pi f, f), ((-1)^n ~ \! \delta_\pi g, g)]_\ltimes\\
            =~& \big( (-1)^{m+n} ~ \! [\delta_\pi f, \delta_\pi g]_\mathsf{GV} ~ \! , ~ \! [f,g]_\pi + (-1)^m ~ \! \iota_{\delta_\pi f } g - (-1)^{(m+1)n}~ \!  \iota_{\delta_\pi g} f  \big)\\
            \stackrel{(\ref{FN-GV})}{=}& ~\!\big( (-1)^{m+n} ~ \!  \delta_\pi [f,g]_\mathsf{FN} ~ \! , ~ \! [f,g]_\mathsf{FN} \big)  
            = \Theta [f,g]_\mathsf{FN}
        \end{align*}
and hence, $[ [\Theta(f), \Theta(g)]_\ltimes, \Theta (h) ]_\ltimes = \Theta ([[f, g]_\mathsf{FN}, h]_\mathsf{FN})$.
        Since $\Theta$ is injective and the bracket $[~,~]_\ltimes$ satisfies the graded Jacobi identity,
     the bracket $[~,~]_\mathsf{FN}$ also satisfies the same. Hence $(\mathcal{P}_\bullet, [~,~]_\mathsf{FN})$ is a graded Lie algebra.

        (2) Follows from Proposition \ref{prop-fn-gv}.
    \end{proof}

    Let $N \in \mathcal{P}_1$ be an arbitrary element. Then it follows from the expression (\ref{fn-partial}) that
        \begin{align*}
            [N,N]_\mathsf{FN}= 2 ~ \! \big( (\pi \circ_2 N)\circ_1 N - N \circ_1 (\pi \circ_1 N + \pi \circ_2 N - N \circ_1 \pi)\big).
        \end{align*}
    This shows that $N$ is a Nijenhuis element for the multiplication $\pi$ if and only if $[N, N]_\mathsf{FN}=0$. Hence, Nijenhuis elements for the multiplication $\pi$ can be characterized by  Maurer-Cartan elements in the Fr\"{o}licher-Nijenhuis algebra $(\mathcal{P}_\bullet, [~,~]_\mathsf{FN})$. As a result, one may define a cohomology theory associated with a Nijenhuis element. More precisely, let $N$ be a Nijenhuis element for the multiplication $\pi$ in a given nonsymmetric operad $\mathcal{P}$. Then for each $n \geq 1$, there is a map
    \begin{align*}
        d_N : \mathcal{P}_n \rightarrow \mathcal{P}_{n+1} ~~~ \text{ given by } ~~~ d_N (f) = [N, f]_\mathsf{FN}, \text{ for } f \in \mathcal{P}_n.
    \end{align*}
    Since $N$ is a Nijenhuis element for the multiplication $\pi$ (i.e., $[N, N ]_\mathsf{FN} = 0$), it turns out that $(d_N)^2 = 0$. Hence $\{ \mathcal{P}_\bullet , d_N \}$ is a cochain complex. The corresponding cohomology groups are denoted by $H^\bullet_{\mathcal{P}, \pi} (N)$, and they are said to be the cohomology groups associated with the Nijenhuis element $N$.

    \begin{proposition}
        Let $N \in \mathcal{P}_1$ be a Nijenhuis element for the multiplication $\pi$. Define $\pi_N \in \mathcal{P}_2$ by 
            \begin{align*}
                \pi_N := \pi \circ_1 N + \pi \circ_2 N - N \circ_1 \pi .
            \end{align*}
            Then $\pi_N$ is also a multiplication on the nonsymmetric operad $\mathcal{P}$. Moreover, $N$ is a map from the multiplication $\pi_N$ to the multiplication $\pi$.
    \end{proposition}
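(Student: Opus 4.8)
The statement has two parts, and the second is essentially a reformulation of the hypothesis. Writing $\pi_N = \pi \circ_1 N + \pi \circ_2 N - N \circ_1 \pi$, the defining identity (\ref{nij-ele-iden}) of a Nijenhuis element reads precisely $(\pi \circ_2 N) \circ_1 N = N \circ_1 \pi_N$. Comparing with Definition \ref{preserve} (taking the source multiplication to be $\pi_N$, the target multiplication to be $\pi$, and $\varphi = N$), this is exactly the condition $N \circ_1 \pi_N = (\pi \circ_2 N)\circ_1 N$ that $N$ be a map from $\pi_N$ to $\pi$. Hence, once $\pi_N$ is known to be a multiplication, the second assertion is immediate and requires no further computation.

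The real content is therefore that $\pi_N$ is a multiplication, and the plan is to reduce this to the Maurer-Cartan characterization of Nijenhuis elements together with Proposition \ref{prop-fn-gv}. The first step is to identify $\pi_N$ with $\delta_\pi N$. Indeed, from (\ref{delta-pi}) we have $\delta_\pi N = -[\pi, N]_{\mathsf{GV}}$, and unwinding the Gerstenhaber-Voronov bracket on $\pi \in \mathcal{P}_2$ and $N \in \mathcal{P}_1$ gives $[\pi,N]_{\mathsf{GV}} = \iota_\pi N - \iota_N \pi = N \circ_1 \pi - (\pi \circ_1 N + \pi \circ_2 N)$, so that $\delta_\pi N = \pi \circ_1 N + \pi \circ_2 N - N \circ_1 \pi = \pi_N$.

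With this identification in hand, I would invoke the fact, established just above the statement, that $N$ is a Nijenhuis element for $\pi$ if and only if $[N,N]_{\mathsf{FN}} = 0$. Applying Proposition \ref{prop-fn-gv} with $f = g = N$ then yields
\[
[\pi_N, \pi_N]_{\mathsf{GV}} = [\delta_\pi N, \delta_\pi N]_{\mathsf{GV}} = \delta_\pi [N,N]_{\mathsf{FN}} = 0 .
\]
Since the ground field has characteristic different from $2$, the vanishing $[\pi_N, \pi_N]_{\mathsf{GV}} = 0$ is equivalent to $\iota_{\pi_N}\pi_N = 0$, i.e. to $\pi_N \circ_1 \pi_N = \pi_N \circ_2 \pi_N$, which is precisely the assertion that $\pi_N$ is a multiplication. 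This completes the argument.

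I do not expect a serious obstacle here: the two genuinely load-bearing facts, namely $\delta_\pi N = \pi_N$ and the identity (\ref{FN-GV}), are both available, and everything else is bookkeeping. The only alternative, should one wish to avoid Proposition \ref{prop-fn-gv}, would be to expand $\iota_{\pi_N}\pi_N$ directly and collapse it using $\iota_\pi \pi = 0$ together with the Nijenhuis identity; that route is purely computational and is exactly what the Fr\"olicher-Nijenhuis formalism is designed to bypass.
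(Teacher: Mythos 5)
Your proposal is correct and follows essentially the same route as the paper's own proof: both identify $\pi_N = \delta_\pi N$ via the Gerstenhaber--Voronov bracket, then apply the identity $\delta_\pi[N,N]_\mathsf{FN} = [\delta_\pi N, \delta_\pi N]_\mathsf{GV}$ from Proposition \ref{prop-fn-gv} together with the Maurer--Cartan characterization $[N,N]_\mathsf{FN}=0$, and read off the second assertion directly from the Nijenhuis identity $N \circ_1 \pi_N = (\pi \circ_2 N)\circ_1 N$. Your added remarks (the characteristic-$\neq 2$ point and the explicit unwinding of $[\pi,N]_\mathsf{GV}$) are details the paper leaves implicit but are entirely consistent with it.
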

    \begin{proof}
        First observe that ~ $\delta_\pi N= -[\pi, N]_\mathsf{GV} = - (\iota_\pi N - \iota_N \pi) = \pi \circ_1 N + \pi \circ_2 N - N \circ_1 \pi = \pi_N$. Hence
            \begin{align*}
                 [\pi_N, \pi_N]_{\mathsf{GV}}=~ [\delta_\pi N \! , ~ \! \delta_\pi N]_{\mathsf{GV}} 
                 \stackrel{(\ref{FN-GV})}{=}~\delta_\pi [N,N]_\mathsf{FN}
                 =~ 0
           \end{align*}
        which shows that $\pi_N$ is a multiplication on $\mathcal{P}$. The last part follows from the defining identity of the Nijenhuis element $N$, namely, $N \circ_1 \pi_N = (\pi \circ_2 N) \circ_1 N$.
    \end{proof}

Let $N$ be a Nijenhuis element for the multiplication $\pi$. In the following, we relate the cohomology associated with the Nijenhuis element $N$ and the cohomology associated with the induced multiplication $\pi_N$. For this, for each $n \geq 1$, we first define a map $\Psi_n : \mathcal{P}_n \rightarrow \mathcal{P}_{n+1}$ by $\Psi_n (f) = (-1)^{n+1} ~ \! \delta_\pi (f)$, for $f \in \mathcal{P}_n$. Then we have
\begin{align*}
    (\delta_{\pi_N} \circ \Psi_n)(f)  =~& - (-1)^{n+1} ~ \! [\pi_N, \delta_\pi f]_\mathsf{GV} \\
    =~& (-1)^{n} ~ \! [\delta_\pi N, \delta_\pi f]_\mathsf{GV} \\
    \stackrel{(\ref{FN-GV})}{=}& ~\!(-1)^n ~ \! \delta_\pi [N, f]_\mathsf{FN} = (-1)^n ~ \! \delta_\pi (d_N f) = (\Psi_{n+1} \circ d_N) (f).
\end{align*}
Here $\delta_{\pi_N}$ is the coboundary operator in the cochain complex $\{ \mathcal{P}_\bullet , \delta_{\pi_N} \}$ associated with the multiplication $\pi_N$. Hence, we get the following.

\begin{proposition}
    Let $N$ be a Nijenhuis element for the multiplication $\pi$ in a nonsymmetric operad $\mathcal{P}$. Then there is a homomorphism $H^\bullet_{\mathcal{P}, \pi} (N) \rightarrow H^{\bullet +1}_{\mathcal{P}} (\pi_N)$ from the cohomology associated with the Nijenhuis element $N$ to the cohomology associated with the induced multiplication $\pi_N$.
\end{proposition}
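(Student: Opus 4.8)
The plan is to recognize the family of maps $\Psi = \{\Psi_n\}_{n \geq 1}$, defined just above by $\Psi_n(f) = (-1)^{n+1}\,\delta_\pi(f)$, as a cochain map that raises the cohomological degree by one: it goes from the complex $\{\mathcal{P}_\bullet, d_N\}$ computing $H^\bullet_{\mathcal{P},\pi}(N)$ to the complex $\{\mathcal{P}_\bullet, \delta_{\pi_N}\}$ computing $H^\bullet_{\mathcal{P}}(\pi_N)$. The intertwining relation $\delta_{\pi_N} \circ \Psi_n = \Psi_{n+1} \circ d_N$, verified in the display preceding the statement, is precisely the condition defining such a cochain map, and the rest of the argument is the standard passage from a cochain map to an induced homomorphism on cohomology.

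First I would use the intertwining relation to check that $\Psi$ sends $d_N$-cocycles to $\delta_{\pi_N}$-cocycles: if $d_N f = 0$ for $f \in \mathcal{P}_n$, then $\delta_{\pi_N}(\Psi_n f) = \Psi_{n+1}(d_N f) = 0$, so $\Psi_n f \in \mathcal{P}_{n+1}$ is a $\delta_{\pi_N}$-cocycle. Next I would verify that $\Psi$ sends $d_N$-coboundaries to $\delta_{\pi_N}$-coboundaries: if $f = d_N h$ with $h \in \mathcal{P}_{n-1}$, then the same relation (at index $n-1$) gives $\Psi_n f = \Psi_n(d_N h) = \delta_{\pi_N}(\Psi_{n-1} h)$, a $\delta_{\pi_N}$-coboundary. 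Together these two facts show that the assignment on cohomology classes $[f] \mapsto [\Psi_n f]$ is well defined.

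This furnishes, for each $n \geq 1$, a homomorphism $H^n_{\mathcal{P},\pi}(N) \to H^{n+1}_{\mathcal{P}}(\pi_N)$, i.e.\ the asserted degree-raising homomorphism $H^\bullet_{\mathcal{P},\pi}(N) \to H^{\bullet+1}_{\mathcal{P}}(\pi_N)$. I do not expect a genuine obstacle at this stage, since the only substantive ingredient is the intertwining identity itself, which has already been established; its proof rests on the equality $\pi_N = \delta_\pi N$, on Proposition \ref{prop-fn-gv} (the compatibility $\delta_\pi[N,f]_\mathsf{FN} = [\delta_\pi N, \delta_\pi f]_\mathsf{GV}$), and on the relation $d_N f = [N,f]_\mathsf{FN}$. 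The remainder is the routine homological principle that a cochain map descends to cohomology, so the proof reduces to invoking that principle.
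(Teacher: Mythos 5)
Your proposal is correct and follows essentially the same route as the paper: the paper establishes the intertwining identity $\delta_{\pi_N} \circ \Psi_n = \Psi_{n+1} \circ d_N$ (using $\pi_N = \delta_\pi N$ and Proposition \ref{prop-fn-gv}) and then concludes immediately, leaving implicit exactly the routine cocycle/coboundary verification you spell out. Your write-up merely makes that standard homological step explicit, so there is nothing to add or correct.
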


\medskip

\section{Rota-Baxter elements and the derived bracket}
Given a nonsymmetric operad with a multiplication $\pi$, here we first consider Rota-Baxter elements of weight $\lambda \in {\bf k}$ for the multiplication $\pi$ as a generalization of Rota-Baxter operators on associative algebras \cite{das-rota,guo-book}. We also construct the graded Lie algebra (resp. differential graded Lie algebra) whose Maurer-Cartan elements are precisely Rota-Baxter elements of weight $0$ (resp. of weight $\lambda \neq 0$). Using this Maurer-Cartan characterization, we define the cohomology of a given Rota-Baxter element of weight $\lambda$. On the other hand, we show that a Rota-Baxter element of weight $\lambda$ induces a new multiplication. Finally, the cohomology associated with this induced multiplication with coefficients in a suitable representation is isomorphic to the cohomology of the given Rota-Baxter element of weight $\lambda$. We end this section by considering averaging elements for the multiplication $\pi$ and finding the Maurer-Cartan characterization of averaging elements.

\medskip

    Let $\mathcal{P}$ be a nonsymmetric operad and $\pi$ be a multiplication on $\mathcal{P}$. An element $R \in \mathcal{P}_1$ is said to be a {\bf Rota-Baxter element of weight $\lambda \in \mathbf{k}$} for the multiplication $\pi$ if
        \begin{align*}
            (\pi \circ_2 R) \circ_1 R = R \circ_1 (\pi \circ_1 R + \pi \circ_2 R + \lambda ~ \! \pi).
        \end{align*}

    \begin{remark}
        Let $\mathcal{P}= \mathrm{End}_A$ be the endomorphism operad and $\pi \in \mathcal{P}_2 = (\mathrm{End}_A)_2$ be a multiplication representing the associative algebra structure $(A, ~\! \cdot ~\!)$ on the vector space $A$. Then an element $R \in (\mathrm{End}_A)_1 = \mathrm{Hom}(A, A)$ is a Rota-Baxter element of weight $\lambda \in {\bf k}$ for the multiplication $\pi$ if and only if $R$ is a Rota-Baxter operator of weight $\lambda \in {\bf k}$ on the associative algebra $(A, ~\! \cdot ~\!) $ \cite{das-rota, guo-book}, i.e.,
        \begin{align*}
            R (a) \cdot R (b) = R (R(a) \cdot b + a \cdot R(b) + \lambda ~\! a \cdot b), \text{ for all } a, b \in A.
        \end{align*}
        Hence, a Rota-Baxter element generalizes Rota-Baxter operators on associative algebras.
    \end{remark}

    \begin{proposition}
        Let $\mathcal{P} = (\mathcal{P}, \circ, \mathbbm{1})$ be a nonsymmetric operad and $\pi$ be a multiplication on $\mathcal{P}$. Let $R \in \mathcal{P}_1$ be a Rota-Baxter element of weight $\lambda$ for the multiplication $\pi$. Then $-\lambda \mathbbm{1}-R \in \mathcal{P}_1$ is also a Rota-Baxter element of same weight for the multiplication $\pi$.
    \end{proposition}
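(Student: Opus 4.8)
The plan is to verify the defining Rota-Baxter identity of weight $\lambda$ directly for the element $S := -\lambda \mathbbm{1} - R$, reducing it by bilinear expansion to the Rota-Baxter identity already assumed for $R$. The only structural facts required are the bilinearity of the partial compositions $\circ_i$ and the identity axiom $\pi \circ_i \mathbbm{1} = \mathbbm{1} \circ_1 \pi = \pi$; notably, no associativity-type operad relations enter the computation.

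First I would record the two elementary simplifications $\pi \circ_1 S = -\lambda \pi - \pi \circ_1 R$ and $\pi \circ_2 S = -\lambda \pi - \pi \circ_2 R$, obtained by expanding $S$ and absorbing $\mathbbm{1}$ via $\pi \circ_i \mathbbm{1} = \pi$. From these, the inner factor on the right-hand side of the target identity collapses cleanly: $\pi \circ_1 S + \pi \circ_2 S + \lambda \pi = -\lambda \pi - \pi \circ_1 R - \pi \circ_2 R$, where the stray multiples of $\lambda \pi$ combine to a single term.

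Next I would expand both sides of the equation $(\pi \circ_2 S) \circ_1 S = S \circ_1 (\pi \circ_1 S + \pi \circ_2 S + \lambda \pi)$. For the left-hand side, substituting $\pi \circ_2 S = -\lambda \pi - \pi \circ_2 R$ and $S = -\lambda \mathbbm{1} - R$ and distributing, using $\pi \circ_1 \mathbbm{1} = \pi$ and $(\pi \circ_2 R) \circ_1 \mathbbm{1} = \pi \circ_2 R$, yields $\lambda^2 \pi + \lambda (\pi \circ_1 R) + \lambda (\pi \circ_2 R) + (\pi \circ_2 R) \circ_1 R$. For the right-hand side, substituting the simplified inner factor together with $S = -\lambda \mathbbm{1} - R$ and using $\mathbbm{1} \circ_1 X = X$ yields $\lambda^2 \pi + \lambda (\pi \circ_1 R) + \lambda (\pi \circ_2 R) + R \circ_1 (\pi \circ_1 R + \pi \circ_2 R + \lambda \pi)$.

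Finally, subtracting the two expressions, every term that is quadratic or linear in $\lambda$ cancels identically, so the difference reduces to exactly $(\pi \circ_2 R) \circ_1 R - R \circ_1 (\pi \circ_1 R + \pi \circ_2 R + \lambda \pi)$, which vanishes precisely because $R$ is a Rota-Baxter element of weight $\lambda$. This establishes the identity for $S$ and completes the proof. There is no genuine obstacle here: the argument is a routine bilinear expansion, and the only point demanding care is the bookkeeping of signs together with the repeated, correct application of the identity axiom for $\mathbbm{1}$ as it is absorbed against $\pi$ and against $\pi \circ_2 R$.
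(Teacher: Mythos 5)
Your proof is correct and takes essentially the same route as the paper's: a direct bilinear expansion of both sides of the Rota-Baxter identity for $-\lambda \mathbbm{1} - R$, absorbing $\mathbbm{1}$ via the identity axiom, after which the terms linear and quadratic in $\lambda$ cancel and everything reduces to the Rota-Baxter identity for $R$. The only cosmetic difference is that the paper first factors the overall signs out (rewriting both sides in terms of $\lambda \mathbbm{1} + R$ with $-\lambda\pi$ in the inner factor) before expanding, whereas you expand directly; the computations are otherwise identical.
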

    \begin{proof}
We observe that
            \begin{align*}
                 &\big( \pi \circ_2  (-\lambda \mathbbm{1}-R)  \big) \circ_1 (-\lambda \mathbbm{1}-R) - (-\lambda \mathbbm{1}-R) \circ_1 \big( \pi \circ_1 (-\lambda \mathbbm{1}-R) + \pi \circ_2 (-\lambda \mathbbm{1}-R) + \lambda \pi   \big) \\
                &~=   \big( \pi \circ_2 (\lambda \mathbbm{1} +R)  \big) \circ_1 (\lambda \mathbbm{1}+R) - (\lambda \mathbbm{1}+R)\circ_1 \big( \pi \circ_1 (\lambda \mathbbm{1}+R)+ \pi \circ_2 (\lambda \mathbbm{1}+R)-\lambda \pi  \big)\\
                &~=  (\lambda \pi + \pi \circ_2 R)\circ_1 (\lambda \mathbbm{1} +R)- (\lambda \mathbbm{1}+R)- (\lambda \mathbbm{1}+ R)\circ_1 (\lambda \pi + \pi \circ_1 R + \lambda \pi + \pi \circ_2 R - \lambda \pi)\\
                &~= \lambda^2 \pi + \lambda (\pi \circ_1 R) + \lambda (\pi \circ_2 R)+ (\pi \circ_2 R)\circ_1 R - \lambda^2 \pi - \lambda (\pi \circ_1 R) - \lambda(\pi \circ_2 R)-\lambda (R \circ_1 \pi) \\
                  &~ \quad  - R \circ_1 (\pi \circ_1 R)- R \circ_1 (\pi \circ_2 R )\\ 
                &~=  (\pi \circ_2 R)\circ_1 R- R \circ_1 (\pi \circ_1 R +\pi \circ_2 R +\lambda \pi)
                =  0
            \end{align*}
        which shows that $-\lambda \mathbbm{1}-R $ is also a Rota-Baxter element of weight $\lambda$ for the multiplication $\pi$.
        \end{proof}

    Our aim in this section is to construct a graded Lie algebra (resp. a differential graded Lie algebra) whose Maurer-Cartan elements are precisely Rota-Baxter elements of weight $0$ (resp. of a nonzero weight $\lambda \in {\bf k}$) for a given multiplication $\pi$. Let $\mathcal{P}$ be a nonsymmetric operad and $\pi$ be a multiplication on $\mathcal{P}$. For each $m,n \geq 1$, we define a bracket $[~,~]_\mathsf{D} : \mathcal{P}_m \times \mathcal{P}_n \rightarrow \mathcal{P}_{m+n}$ by
        \begin{align} \label{derived-bracket}
            [f,g]_\mathsf{D} := [f,g]_\pi + \iota_{\theta f} g - (-1)^{mn} ~ \! \iota_{\theta g} f, 
        \end{align}
    for $f \in \mathcal{P}_m$ and $g \in\mathcal{P}_n$. Here $\theta : \mathcal{P}_n \rightarrow \mathcal{P}_{n+1}$ (for $n \geq 1$) is the map given in (\ref{theta}). The bracket $[~,~]_\mathsf{D}$ is called the {\bf derived bracket} associated to the nonsymmetric operad $\mathcal{P}$ with multiplication $\pi$. 

    \begin{proposition} \label{D-GV}
        For a nonsymmetric operad $\mathcal{P} $ with multiplication $\pi$, we have
        \begin{align} \label{theta-D}
            \theta[f,g]_\mathsf{D} = [\theta f, \theta g]_{\mathsf{GV}}, \text{ for }  f \in \mathcal{P}_m, ~ \! g \in \mathcal{P}_n.
        \end{align}
    \end{proposition}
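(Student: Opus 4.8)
The plan is to prove \eqref{theta-D} by applying $\theta$ termwise to the defining expression (\ref{derived-bracket}) of the derived bracket and then reassembling the outcome into the Gerstenhaber--Voronov bracket of $\theta f$ and $\theta g$. Writing $[f,g]_\mathsf{D} = [f,g]_\pi + \iota_{\theta f}g - (-1)^{mn}\iota_{\theta g}f$, I would expand $\theta[f,g]_\mathsf{D}$ as the sum of $\theta[f,g]_\pi$, $\theta(\iota_{\theta f}g)$ and $-(-1)^{mn}\theta(\iota_{\theta g}f)$, and dispose of each summand with an identity already established in the paper. The first one is handled directly by the derivation-like property (\ref{theta-pi}), namely $\theta[f,g]_\pi = (-1)^n[\theta f, g]_\pi + [f,\theta g]_\pi$.

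For the remaining two summands the essential tool is a rearranged form of (\ref{first}): reading that identity as $\theta(\iota_a b) = \iota_a(\theta b) - (-1)^{\deg b}[a,b]_\pi$, I would apply it with $(a,b)=(\theta f,g)$ and with $(a,b)=(\theta g,f)$. This yields $\theta(\iota_{\theta f}g) = \iota_{\theta f}(\theta g) - (-1)^n[\theta f, g]_\pi$ and $\theta(\iota_{\theta g}f) = \iota_{\theta g}(\theta f) - (-1)^m[\theta g, f]_\pi$, where the signs $(-1)^n$ and $(-1)^m$ come from the arities of $g$ and $f$ respectively. Substituting all three computations, the two copies of $(-1)^n[\theta f, g]_\pi$ cancel, leaving
\[
    \theta[f,g]_\mathsf{D} = \iota_{\theta f}(\theta g) - (-1)^{mn}\iota_{\theta g}(\theta f) + [f,\theta g]_\pi + (-1)^{mn+m}[\theta g, f]_\pi .
\]
Since $\theta f \in \mathcal{P}_{m+1}$ and $\theta g \in \mathcal{P}_{n+1}$ carry Gerstenhaber--Voronov degrees $m$ and $n$, the first two terms are precisely $[\theta f, \theta g]_\mathsf{GV}$.

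It then remains to verify that the leftover cup-bracket terms $[f,\theta g]_\pi + (-1)^{mn+m}[\theta g, f]_\pi$ vanish, and this is the only place where I expect any genuine care to be needed. The subtlety is not conceptual but bookkeeping: the cup bracket $[~,~]_\pi$ is graded by the actual arities, so graded skew-symmetry reads $[\theta g, f]_\pi = -(-1)^{(n+1)m}[f,\theta g]_\pi$ with $f$ of degree $m$ and $\theta g$ of degree $n+1$. The coefficient of $[f,\theta g]_\pi$ thus becomes $1 - (-1)^{mn+m+(n+1)m} = 1 - (-1)^{2mn+2m} = 0$, so the two terms cancel and the proposition follows. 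The main obstacle throughout is keeping the two grading conventions apart, namely that $[~,~]_\pi$ uses the unshifted arities $m,n$ whereas $[~,~]_\mathsf{GV}$ uses the shifted arities, and matching the signs produced by (\ref{first}) to the arity of whichever element sits inside $\theta$.
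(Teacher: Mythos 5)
Your proof is correct and follows essentially the same route as the paper's: both expand the defining expression (\ref{derived-bracket}), handle $\theta[f,g]_\pi$ via (\ref{theta-pi}), and then invoke (\ref{first}) together with the graded skew-symmetry of $[~,~]_\pi$ to make everything except $\iota_{\theta f}(\theta g) - (-1)^{mn}\,\iota_{\theta g}(\theta f) = [\theta f, \theta g]_\mathsf{GV}$ cancel. The only cosmetic difference is the direction in which (\ref{first}) is applied — the paper rewrites the cup-bracket terms as contractions so that the $\theta(\iota\,\cdot)$ terms cancel, whereas you rewrite the $\theta(\iota\,\cdot)$ terms so that the leftover cup-bracket terms cancel — and your sign bookkeeping (including the $1 - (-1)^{2mn+2m} = 0$ cancellation) is accurate throughout.
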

    \begin{proof} We have
        \begin{align*}
            \theta [f,g]_\mathsf{D} =~& \theta \big( [f,g]_\pi + \iota_{\theta f} g - (-1)^{mn} ~ \! \iota_{\theta g} f  \big) \\
            \stackrel{(\ref{theta-pi})}{=}& ~\! (-1)^n~\! [\theta f , g]_\pi + [f, \theta g]_\pi + \theta (\iota_{\theta f} g) - (-1)^{mn} ~ \! \theta (\iota_{\theta g} f)\\
            \stackrel{~\!(\ref{first})~\!}{=}&  \iota_{\theta f} \theta g - \cancel{\theta (\iota_{\theta f} g)} - (-1)^{m(n+1)} ~ \! (-1)^m ~ \!  \iota_{\theta g} \theta f + (-1)^{m(n+1)} ~ \! (-1)^m ~ \!  \cancel{\theta (\iota_{\theta g} f)} + \cancel{\theta(\iota_{\theta f} g )}- (-1)^{mn} ~ \!  \cancel{\theta (\iota_{\theta g} f)}\\
            =~& \iota_{\theta f} \theta g - (-1)^{mn} ~ \! \iota_{\theta g} \theta f
            = [\theta f, \theta g]_\mathsf{GV}
        \end{align*}   
        which proves the result.
    \end{proof}

    \begin{theorem}
        Let $\mathcal{P}$ be a nonsymmetric operad and $\pi$ be a multiplication on $\mathcal{P}$.
            \begin{enumerate}
                \item Then the derived bracket makes $(\mathcal{P}_\bullet, [~,~]_\mathsf{D})$ into a graded Lie algebra, called the {\bf derived algebra}.
                \item Moreover, the map $\theta : \mathcal{P}_\bullet \rightarrow \mathcal{P}_{\bullet +1}$ is a graded Lie algebra homomorphism from the derived algebra $(\mathcal{P}_\bullet, [~,~]_\mathsf{D})$ to the Gerstenhaber-Voronov algebra $(\mathcal{P}_{\bullet +1}, [~,~]_\mathsf{GV})$.
            \end{enumerate}
    \end{theorem}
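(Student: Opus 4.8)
The plan is to follow the same strategy used for the Fr\"olicher-Nijenhuis algebra in Theorem \ref{FN-gla}, transporting the graded Lie algebra axioms from the semidirect product bracket $[~,~]_\ltimes$ of (\ref{semidirect-product}) through a suitable embedding. For part (1), graded skew-symmetry of $[~,~]_\mathsf{D}$ is immediate from the defining formula (\ref{derived-bracket}): writing out $[g,f]_\mathsf{D}$, substituting the graded skew-symmetry $[g,f]_\pi = -(-1)^{mn}[f,g]_\pi$ of the cup bracket, and collecting the two contraction terms yields $[f,g]_\mathsf{D} = -(-1)^{mn}[g,f]_\mathsf{D}$. The content of the theorem is therefore the graded Jacobi identity, and this is where I would invest the work.

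To obtain the Jacobi identity, I would introduce the map $\Theta : \mathcal{P}_\bullet \rightarrow \mathcal{P}_{\bullet +1} \oplus \mathcal{P}_\bullet$ defined by $\Theta(f) = (\theta f, f)$ for $f \in \mathcal{P}_m$. This is a degree-preserving, injective map of graded vector spaces, being a section of the projection onto the second factor. The crucial claim is that $\Theta$ is a morphism into the semidirect product graded Lie algebra $(\mathcal{P}_{\bullet+1}\oplus \mathcal{P}_\bullet, [~,~]_\ltimes)$, that is, $[\Theta(f),\Theta(g)]_\ltimes = \Theta([f,g]_\mathsf{D})$. Expanding the left-hand side with (\ref{semidirect-product}) gives, in its second component, exactly $[f,g]_\pi + \iota_{\theta f} g - (-1)^{mn}\iota_{\theta g} f = [f,g]_\mathsf{D}$, while its first component is $[\theta f, \theta g]_\mathsf{GV}$, which equals $\theta([f,g]_\mathsf{D})$ by Proposition \ref{D-GV}. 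A pleasant feature, in contrast with the Fr\"olicher-Nijenhuis case where $\Theta(f) = ((-1)^m\delta_\pi f, f)$ carried a degree-dependent sign, is that here $\theta$ enters the derived bracket (\ref{derived-bracket}) directly, so no corrective sign is needed and the two components match verbatim.

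Once $\Theta$ is known to be an injective homomorphism, the Jacobi identity transfers formally: applying $\Theta$ to $[[f,g]_\mathsf{D}, h]_\mathsf{D}$ and using the homomorphism property twice rewrites it as $[[\Theta f, \Theta g]_\ltimes, \Theta h]_\ltimes$, so the graded Jacobi identity of $[~,~]_\ltimes$ together with the injectivity of $\Theta$ forces $[~,~]_\mathsf{D}$ to satisfy the graded Jacobi identity; this establishes part (1). Part (2) is then immediate, since the assertion that $\theta$ is a graded Lie algebra homomorphism from $(\mathcal{P}_\bullet, [~,~]_\mathsf{D})$ to $(\mathcal{P}_{\bullet+1}, [~,~]_\mathsf{GV})$ is precisely the identity $\theta[f,g]_\mathsf{D} = [\theta f, \theta g]_\mathsf{GV}$ proved in Proposition \ref{D-GV}, and $\theta$ is manifestly degree-preserving. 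I expect the only genuine obstacle to be the verification of the homomorphism identity $[\Theta(f),\Theta(g)]_\ltimes = \Theta([f,g]_\mathsf{D})$, and even this is largely bookkeeping once Proposition \ref{D-GV} is in hand; the remainder of the argument is purely formal and mirrors the proof of Theorem \ref{FN-gla}.
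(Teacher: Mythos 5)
Your proposal is correct and follows essentially the same route as the paper: the paper also verifies skew-symmetry directly from (\ref{derived-bracket}), embeds $\mathcal{P}_\bullet$ into the semidirect product via $\Phi(f) = (\theta f, f)$, uses Proposition \ref{D-GV} to show $\Phi$ is an injective graded Lie algebra homomorphism, and transfers the Jacobi identity from $[~,~]_\ltimes$, with part (2) being exactly Proposition \ref{D-GV}. Your observation that no degree-dependent sign is needed here, unlike in the Fr\"olicher-Nijenhuis case, is accurate and matches the paper's construction.
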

    \begin{proof}
        (1) It is easy to see from the expression (\ref{derived-bracket}) that the derived bracket $[~,~]_\mathsf{D} $ is graded skew-symmetric. To prove the graded Jacobi identity, we consider the injective map $\Phi: \mathcal{P}_\bullet \rightarrow \mathcal{P}_{\bullet +1} \oplus \mathcal{P}_\bullet$ of graded vector spaces by $\Phi (f) = (\theta (f), f)$, for $f \in \mathcal{P}_m$. Then for $f \in \mathcal{P}_m, ~ \! g \in \mathcal{P}_n$ and $h \in \mathcal{P}_k$, we have
        \begin{align*}
            [\Phi (f), \Phi (g)]_\ltimes =~& [(\theta f, f),(\theta g, g)]_\ltimes
            = \big( [\theta f, \theta g]_\mathsf{GV} ~ \! , ~ \! [f,g]_\pi + \iota_{\theta f} g - (-1)^{mn} ~ \! \iota_{\theta g} f \big)\\
            \stackrel{(\ref{theta-D})}{=}& ~\! \big( \theta [f,g]_\mathsf{D}, [f,g]_\mathsf{D} \big) 
            = \Phi [f,g]_\mathsf{D}
        \end{align*}
        and hence, $[ [ \Phi (f), \Phi (g)]_\ltimes, \Phi (h) ]_\ltimes = \Phi ([[f, g]_\mathsf{D}, h]_\mathsf{D})$. Since $\Phi$ is injective and $[~,~]_\ltimes$ satisfies the graded Jacobi identity, the bracket $[~,~]_\mathsf{D} $ also satisfies the same. Hence $(\mathcal{P}_\bullet , [~,~]_\mathsf{D})$ is a graded Lie algebra.
        
        (2) Follows from Proposition \ref{D-GV}.   
    \end{proof}

    Let $R \in \mathcal{P}_1$ be an arbitrary element. Then it follows from (\ref{derived-bracket}) that
        \begin{align*}
            [R,R]_\mathsf{D}= 2 ~ \! \big( (\pi \circ_2 R) \circ_1 R - R \circ_1 (\pi \circ_1 R + \pi \circ_2 R)  \big).
        \end{align*}
    This shows that $R$ is a Rota-Baxter element of weight zero if and only if $[R, R]_\mathsf{D}=0$. In other words, Maurer-Cartan elements in the derived algebra $(\mathcal{P}_\bullet, [~,~]_\mathsf{D})$ are precisely Rota-Baxter elements of weight $0$ for the multiplication $\pi$.

    Let $\lambda \in \mathbf{k}$ be a fixed scalar. For any $n \geq 1$, we now define a map $d_\lambda : \mathcal{P}_n \rightarrow \mathcal{P}_{n+1}$ by
        \begin{align} \label{d_lambda}
            d_\lambda (f)= -\lambda ~ \!  \iota_\pi f ,  \text{ for } f \in \mathcal{P}_n.
        \end{align}
    Thus, in terms of the map $D$ given in (\ref{derivation-map}), we have $d_\lambda = \lambda ~ \! D$.

    \begin{proposition}
        For any $\lambda \in {\bf k}$, the map $d_\lambda$ is a differential (i.e., $d_\lambda ^2 =0$). Moreover, $d_\lambda$ is a graded derivation for the derived bracket $[~,~]_\mathsf{D}$.
    \end{proposition}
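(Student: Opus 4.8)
The plan is to reduce both assertions to facts about the single operator $D$ of (\ref{derivation-map}), using the already-noted identity $d_\lambda = \lambda\, D$. The first assertion is then immediate: since $D^2 = 0$ was established earlier (via $\iota_\pi\iota_\pi f \stackrel{(\ref{preLie})}{=} \iota_{\iota_\pi\pi} f = 0$), we get $d_\lambda^2 = \lambda^2 D^2 = 0$, so no separate computation is needed. For the derivation property, because rescaling a graded-derivation identity by the constant $\lambda$ preserves it, it suffices to prove that $D$ itself is a graded derivation for the derived bracket $[~,~]_\mathsf{D}$ of (\ref{derived-bracket}).

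The key technical ingredient I would isolate first is a Leibniz-type rule describing how $D = -\iota_\pi$ interacts with the contraction operator, namely
\[
 D(\iota_a \psi) = \iota_{\delta_\pi a}\,\psi + (-1)^{p}\,\iota_a (D\psi), \qquad a \in \mathcal{P}_{p+1},\ \psi \in \mathcal{P}_n.
\]
I would derive this purely from the pre-Lie identity (\ref{preLie}) applied to the triple $(\pi, a, \psi)$, together with $\iota_\pi\pi = 0$, the definitions $D a = -\iota_\pi a$ and $\theta a = -\iota_a\pi$ of (\ref{derivation-map}) and (\ref{theta}), and the decomposition (\ref{delta-D-theta}); after substituting and collecting the $\iota_{\theta a}\psi$ contributions they cancel, leaving the stated formula. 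From the same relations (using in addition $D^2=0$ and $\delta_\pi^2=0$) I would record the commutation rule $\delta_\pi\theta = \theta D$, equivalently $\delta_\pi(\theta\phi)=\theta(D\phi)$.

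With these in hand I would mirror the proof of the graded Jacobi identity for $[~,~]_\mathsf{D}$. Recall the injective morphism $\Phi(f) = (\theta f, f)$ from $(\mathcal{P}_\bullet, [~,~]_\mathsf{D})$ into the semidirect product $(\mathcal{P}_{\bullet+1}\oplus\mathcal{P}_\bullet, [~,~]_\ltimes)$ of (\ref{semidirect-product}). I would lift $D$ to the operator $\overline{D}(f,\phi) := (\delta_\pi f,\, D\phi)$ and check two things. First, $\overline{D}$ is a graded derivation of $[~,~]_\ltimes$: the $\mathsf{GV}$ component follows because $\delta_\pi = -[\pi,-]_\mathsf{GV}$ is a graded derivation of $[~,~]_\mathsf{GV}$, while the $\mathcal{P}_\bullet$ component reduces, after using that $D$ is already a graded derivation of $[~,~]_\pi$, exactly to the Leibniz rule above applied to the mixed terms $\iota_f\psi$ and $\iota_g\phi$. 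Second, $\overline{D}\circ\Phi = \Phi\circ D$, which is precisely the commutation rule $\delta_\pi(\theta\phi)=\theta(D\phi)$. Since $\Phi$ is an injective graded Lie algebra homomorphism intertwining $\overline{D}$ with $D$, the derivation identity for $\overline{D}$ on the image transports back to $D$ on $(\mathcal{P}_\bullet, [~,~]_\mathsf{D})$; multiplying by $\lambda$ concludes the proof.

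I expect the main obstacle to be the sign bookkeeping: both the verification of the Leibniz rule and the check that $\overline{D}$ is a derivation of $[~,~]_\ltimes$ involve carefully matching signs on the four mixed contraction terms, whereas everything else is formal. As an alternative that uses the same two lemmas, one can avoid the lift $\overline{D}$ entirely and expand $D[f,g]_\mathsf{D}$ directly from (\ref{derived-bracket}), invoking the derivation property of $D$ for $[~,~]_\pi$ and applying the Leibniz rule with $a=\theta f$ and $a=\theta g$ together with $\delta_\pi\theta=\theta D$; the two routes require identical input.
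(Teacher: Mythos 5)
Your proposal is correct, and for the derivation property it takes a genuinely different route from the paper (the reduction $d_\lambda = \lambda D$ and $d_\lambda^2 = \lambda^2 D^2 = 0$ is the same). The paper argues by direct expansion: it first derives $\theta^2(f) = (-1)^m(\theta\delta_\pi - \delta_\pi\theta)(f)$, i.e.\ (\ref{theta-square}), from $0 = D^2(f) = D(\delta_\pi f - (-1)^m\,\theta f)$ --- an identity exactly equivalent, via (\ref{delta-D-theta}) and $\delta_\pi^2 = 0$, to your commutation rule $\delta_\pi\theta = \theta D$ --- and then expands $[D(f),g]_\mathsf{D} + (-1)^m\,[f,D(g)]_\mathsf{D}$ in full, using (\ref{delta-D-theta}) to split $D$ into its $\delta_\pi$ and $\theta$ parts, identity (\ref{second}) of Proposition \ref{pi-bracket} to rewrite the cup brackets $[\theta f, g]_\pi$ and $[f,\theta g]_\pi$, and finally (\ref{theta-square}), the derivation property of $\delta_\pi$ for $[~,~]_\pi$, and Proposition \ref{D-GV} to reassemble everything into $\delta_\pi[f,g]_\mathsf{D} + (-1)^{m+n-1}\,\theta[f,g]_\mathsf{D} = D[f,g]_\mathsf{D}$. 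You instead extend the paper's own Jacobi-identity device to the differential: you lift $D$ to $\overline{D}(f,\phi) = (\delta_\pi f, D\phi)$ on the semidirect product, check that $\overline{D}$ is a derivation of $[~,~]_\ltimes$ --- which splits into three clean inputs: $\delta_\pi$ is a derivation of $[~,~]_\mathsf{GV}$, $D$ is a derivation of $[~,~]_\pi$ (both already established in the paper), plus your contraction Leibniz rule $D(\iota_a\psi) = \iota_{\delta_\pi a}\psi + (-1)^p\,\iota_a(D\psi)$, which does follow from a single application of (\ref{preLie}) exactly as you describe (note that $\iota_\pi\pi = 0$ is not actually needed in that step; it only enters $D^2 = 0$) --- and transport the identity back through the injective homomorphism $\Phi$ via $\overline{D}\circ\Phi = \Phi\circ D$. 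What your route buys is modularity: all sign bookkeeping is confined to one small lemma, and the final statement becomes the assertion that a dgLa structure pulls back along an injective morphism of graded Lie algebras. What the paper's route buys is self-containedness: no auxiliary lift $\overline{D}$ is introduced, at the cost of one long sign-sensitive computation. Both proofs consume essentially the same ingredients, so neither is materially shorter.
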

    \begin{proof}
         Since $D^2 =0$, we have $d_\lambda ^2 =0$. For $\lambda = 0$, it follows that $d_\lambda = 0$ and hence it is a graded derivation for the derived bracket. For nonzero $\lambda \in {\bf k}$, since $d_\lambda = \lambda ~ \! D$, it is enough to prove that $D$ is a graded derivation for $[~,~]_\mathsf{D}$. For any $f \in \mathcal{P}_m$, since
            \begin{align*}
                0 = D^2(f)= D(\delta_\pi f -(-1)^m ~ \! \theta f) = - \theta^2 (f) + (-1)^m ~ \! (\theta \delta_\pi - \delta_\pi \theta)(f),
            \end{align*}
        we get that
            \begin{align} \label{theta-square}
                \theta^2 (f) = (-1)^m ~ \! (\theta \delta_\pi- \delta_\pi \theta) (f).
            \end{align}
        Hence for any $f \in \mathcal{P}_m$ and $ g \in \mathcal{P}_n$,
            \begin{align*}
                &[D(f), g]_\mathsf{D} + (-1)^m ~ \! [f, D(g)]_\mathsf{D}  \\
                &=   [\delta_\pi f ,g]_\mathsf{D} + (-1)^{m-1} ~ \! [\theta f, g]_\mathsf{D} + (-1)^m ~ \! [f, \delta_\pi  g]_\mathsf{D} + (-1)^{m+n-1} ~ \! [f , \theta g]_\mathsf{D} \quad (\text{by } (\ref{delta-D-theta})) \\
                &= \Big( [\delta_\pi f, g]_\pi + \iota_{\theta \delta_\pi f} g - (-1)^{(m+1)n} ~ \! \cancel{\iota_{\theta g} \delta_\pi f}  \Big)  \\
                & \quad + (-1)^{m-1} \Big( \cancel{\iota_{\theta f} \delta_\pi g} +(-1)^m ~ \! \iota_{\delta_\pi \theta f}g +(-1)^{m+1} ~ \! \delta_\pi (\iota_{\theta f} g) + \iota_{\theta^2 f} g - (-1)^{(m+1)n} ~ \! \iota_{\theta g} \theta f  \Big)  \\
                & \quad + (-1)^m ~ \! \Big( [f, \delta_\pi g]_\pi + \cancel{\iota_{\theta f} \delta_\pi g} - (-1)^{m(n+1)} ~ \! \iota_{\theta \delta_\pi g}f \Big)  \\
                & \quad + (-1)^{(m+1)n} ~ \! \Big( \cancel{\iota_{\theta g} \delta_\pi f} + (-1)^n ~ \! \iota_{\delta_\pi \theta g} f + (-1)^{n+1} ~ \! \delta_\pi(\iota_{\theta g}f) + \iota_{\theta^2 g} f - (-1)^{m(n+1)} ~ \! \iota_{\theta f} \theta g  \Big) \\
                &= [\delta_\pi f ,g]_\pi + (-1)^m ~ \! [f, \delta_\pi g]_\pi + \delta_\pi \iota_{\theta f} g - (-1)^{mn} ~ \! \delta_\pi \iota_{\theta g} f + (-1)^{m+n-1} ~ \! \big( \iota_{\theta f} \theta g - (-1)^{mn} ~ \! \iota_{\theta g} \theta f \big) \\
                & \quad + \big( \iota_{\theta \delta_\pi f} g - \iota_{\delta_\pi \theta f} g - (-1)^m ~ \! \iota_{\theta^2 f} g  \big) - (-1)^{mn} ~ \! \big( \iota_{\theta \delta_\pi g} f - \iota_{\delta_pi \theta g} f - (-1)^n ~ \! \iota_{\theta^2 g} f \big) \\
                &\stackrel{(\ref{theta-square})}{=} \delta_\pi\big( [f,g]_\pi + \iota_{\theta f}g -(-1)^{mn} ~ \! \iota_{\theta g} f \big) + (-1)^{m+n-1} ~ \! [\theta f, \theta g]_\mathsf{GV}\\
                &\stackrel{(\ref{theta-D})}{=} \delta_\pi [f,g]_\mathsf{D} +(-1)^{m+n-1} ~ \! \theta ([f,g]_\mathsf{D} )
                = D [f,g]_\mathsf{D}
            \end{align*}
        which shows that $D$ is a graded derivation for the bracket $[~,~]_\mathsf{D} $. This concludes the proof.
    \end{proof}
    The above proposition shows that the triple $(\mathcal{P}_\bullet , [~,~]_\mathsf{D}, d_\lambda)$ is a differential graded Lie algebra. Moreover, for any arbitrary element $R \in \mathcal{P}_1$, we have
        \begin{align*}
            d_\lambda (R) = \lambda ~ \! D(R) =- \lambda ~ \! \iota_\pi R = - \lambda ~ \!  R \circ_1 \pi
        \end{align*}
    and hence,
        \begin{align*}
            d_\lambda (R) + \frac{1}{2}[R,R]_\mathsf{D} = (\pi \circ_2 R) \circ_1 R - R \circ_1 (\pi \circ_1 R + \pi \circ_2 R + \lambda ~ \! \pi).
        \end{align*}
    This shows that $R$ is a Rota-Baxter element of weight $\lambda \in \mathbf{k}$ for the multiplication $\pi$ if and only if $R$ is a Maurer-Cartan element in the differential graded Lie algebra ($\mathcal{P}_\bullet, [~,~]_\mathsf{D}, d_\lambda$). Let $R$ be a fixed Rota-Baxter element of weight $\lambda$ for the multiplication $\pi$. Then for each $n \geq 1$, we define a map $d_R : \mathcal{P}_n \rightarrow \mathcal{P}_{n+1}$ by
    \begin{align}\label{map-dr}
        d_R (f) = - \big( d_\lambda (f) + [R, f]_\mathsf{D} \big), \text{ for } f \in \mathcal{P}_n.
    \end{align}
    Then it turns out that $(d_R)^2 = 0$. In other words, $\{ \mathcal{P}_\bullet, d_R \}$ is a cochain complex. The corresponding cohomology groups are denoted by $H^\bullet_{\mathcal{P}, \pi} (R)$ and they are said to be the cohomology groups associated with the Rota-Baxter element $R$ of weight $\lambda$.

    \medskip

    The next result can be proved directly. However, we provide proof based on the Maurer-Cartan characterizations of multiplications and Rota-Baxter elements.
    

    \begin{proposition}
        Let $R \in \mathcal{P}_1$ be a Rota-Baxter element of weight $\lambda$ for the multiplication $\pi$. We define an element $\pi_R \in \mathcal{P}_2$ by 
            \begin{align*}
                \pi_R:= \pi \circ_1 R + \pi \circ_2 R + \lambda ~ \! \pi .
            \end{align*}
        Then $\pi_R$ is also a multiplication on the nonsymmetric operad $\mathcal{P}$. Moreover, $R$ is a map from multiplication $\pi_R$ to the multiplication $\pi$.
    \end{proposition}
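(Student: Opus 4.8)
The second assertion is immediate and I would dispose of it first. By Definition \ref{preserve}, saying that $R$ is a map from $\pi_R$ to $\pi$ means precisely $R \circ_1 \pi_R = (\pi \circ_2 R) \circ_1 R$; substituting the definition of $\pi_R$ rewrites the left-hand side as $R \circ_1 (\pi \circ_1 R + \pi \circ_2 R + \lambda \pi)$, which equals $(\pi \circ_2 R) \circ_1 R$ by the very defining identity of a Rota-Baxter element of weight $\lambda$. So the real content is the first assertion, that $\pi_R$ is a multiplication, i.e. $[\pi_R, \pi_R]_\mathsf{GV} = 0$. In keeping with the paper's stated preference, the plan is to prove this through Maurer-Cartan characterizations rather than by brute-force expansion of $\pi_R \circ_1 \pi_R - \pi_R \circ_2 \pi_R$.

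The first move is to rewrite $\pi_R$ intrinsically. Since $\theta(R) = -(\pi \circ_1 R + \pi \circ_2 R)$ by (\ref{theta}), we have $\pi_R = \lambda \pi - \theta(R)$. Expanding the Gerstenhaber-Voronov bracket bilinearly and noting that $\pi$ and $\theta(R)$ both lie in $\mathcal{P}_2$ (so graded skew-symmetry gives $[\pi, \theta R]_\mathsf{GV} = [\theta R, \pi]_\mathsf{GV}$), I would obtain
\[
[\pi_R, \pi_R]_\mathsf{GV} = \lambda^2 \, [\pi, \pi]_\mathsf{GV} - 2\lambda \, [\theta R, \pi]_\mathsf{GV} + [\theta R, \theta R]_\mathsf{GV}.
\]
The first term vanishes because $\pi$ is a multiplication. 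For the last term, Proposition \ref{D-GV} gives $[\theta R, \theta R]_\mathsf{GV} = \theta[R,R]_\mathsf{D}$, and the Maurer-Cartan equation $d_\lambda(R) + \tfrac12 [R,R]_\mathsf{D} = 0$ together with $d_\lambda(R) = \lambda D(R) = -\lambda \, R \circ_1 \pi$ yields $[R,R]_\mathsf{D} = 2\lambda \, R \circ_1 \pi$, hence $[\theta R, \theta R]_\mathsf{GV} = 2\lambda \, \theta(R \circ_1 \pi)$. For weight $\lambda = 0$ this already closes the argument, so all remaining work concerns the weight term.

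The crux is therefore to show the cross term matches, namely $[\theta R, \pi]_\mathsf{GV} = \theta(R \circ_1 \pi)$, and this is where I expect the one genuine calculation to sit. The strategy is to reduce both sides to $\delta_\pi \theta(R)$. On one side, skew-symmetry and $\delta_\pi = -[\pi, -]_\mathsf{GV}$ give $[\theta R, \pi]_\mathsf{GV} = [\pi, \theta R]_\mathsf{GV} = -\delta_\pi \theta(R)$. On the other side, $R \circ_1 \pi = -D(R)$ by (\ref{derivation-map}), so $\theta(R \circ_1 \pi) = -\theta D(R)$, and the identity $\theta D(R) = \delta_\pi \theta(R)$ follows by combining $D(R) = \delta_\pi(R) + \theta(R)$ (the $n=1$ case of (\ref{delta-D-theta})) with $\theta^2(R) = -\theta\delta_\pi(R) + \delta_\pi\theta(R)$ (the $m=1$ case of (\ref{theta-square})), since then $\theta D(R) = \theta\delta_\pi(R) + \theta^2(R) = \delta_\pi\theta(R)$. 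Thus both sides equal $-\delta_\pi\theta(R)$, the cross term contributes $-2\lambda(-\delta_\pi\theta R) = 2\lambda\,\delta_\pi\theta(R)$, which cancels against $2\lambda\,\theta(R\circ_1\pi) = -2\lambda\,\delta_\pi\theta(R)$, giving $[\pi_R, \pi_R]_\mathsf{GV} = 0$. The only delicate point is the sign bookkeeping in this reduction of the cross term; everything else is a direct application of the already-established identities (\ref{theta-D}), (\ref{delta-D-theta}) and (\ref{theta-square}).
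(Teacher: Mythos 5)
Your proof is correct, and it shares the paper's overall skeleton: both arguments dispose of the second assertion by quoting the Rota-Baxter identity $R \circ_1 \pi_R = (\pi \circ_2 R)\circ_1 R$, both rewrite $\pi_R = \lambda \pi - \theta R$, expand $[\pi_R, \pi_R]_\mathsf{GV}$ bilinearly, kill $[\pi,\pi]_\mathsf{GV}$, convert $[\theta R, \theta R]_\mathsf{GV}$ into $\theta [R,R]_\mathsf{D}$ via Proposition \ref{D-GV}, and close with the Maurer-Cartan characterization of weight-$\lambda$ Rota-Baxter elements. Where you genuinely diverge is the cross term, which is the one real calculation. The paper proves the identity (\ref{theta-d}), namely $\lambda [\pi, \theta R]_\mathsf{GV} = - \theta (d_\lambda R)$, by expanding $\iota_\pi \theta R + \iota_{\theta R} \pi$ into partial compositions and applying the operad axioms by hand. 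You instead derive the equivalent statement $[\pi, \theta R]_\mathsf{GV} = \theta (R \circ_1 \pi)$ purely from structural identities already on record: $[\pi, \theta R]_\mathsf{GV} = -\delta_\pi \theta (R)$ by the definition of $\delta_\pi$, while $\theta (R \circ_1 \pi) = -\theta D(R) = -\delta_\pi \theta (R)$ by combining the $n=1$ case of (\ref{delta-D-theta}) with the $m=1$ case of (\ref{theta-square}); your sign bookkeeping here checks out. Your route buys a computation-free argument in which every partial-composition manipulation is hidden inside identities the paper has already verified, and it even gives the cross-term identity without the factor of $\lambda$, hence valid for arbitrary $R \in \mathcal{P}_1$. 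What the paper's version buys is self-containedness: (\ref{theta-d}) uses only the operad axioms, whereas you lean on (\ref{theta-square}), an equation the paper establishes only inside the proof of the proposition that $d_\lambda$ is a derivation for $[~,~]_\mathsf{D}$ (legitimately available to you, since that proposition precedes this one, but never displayed as a standalone lemma). Both proofs are valid; yours is arguably the cleaner explanation of why the cancellation happens.
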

    \begin{proof}
        We have $\pi_R =   \pi \circ_1 R + \pi \circ_2 R + \lambda ~ \! \pi =  \iota_R \pi + \lambda ~ \! \pi =  - \theta R + \lambda ~ \! \pi.$ Also, observe that
        \begin{align}
            \lambda [\pi, \theta R]_\mathsf{GV} =&~ -\lambda (\iota_\pi \theta R + \iota_{\theta R}\pi) \nonumber\\
            =&~ \lambda \big( (\pi \circ_1 R)\circ_2 \pi + (\pi \circ_2 R) \circ_2 \pi - (\pi \circ_1 R)\circ_1 \pi - (\pi \circ_2 R) \circ_1 \pi  \nonumber\\
            & \qquad + \pi\circ_2 (\pi \circ_1 R) + \pi \circ_2 (\pi \circ_2 R)- \pi \circ_1 (\pi \circ_1 R)- \pi \circ_1 (\pi \circ_2 R)\big)  \nonumber\\
            = &~ - \lambda \big( -(\pi \circ_2 R)\circ_2 \pi + (\pi \circ_1 R) \circ_1 \pi  \big) \qquad   \nonumber\\ 
            =&~ - \lambda (\pi \circ_1 (R \circ_1 \pi)- \pi \circ_2(R \circ_1 \pi))  \nonumber\\
            =&~ - \lambda ~ \! \iota_{R \circ_1 \pi} \pi 
            =  \iota_{\lambda D(R)} \pi = - \theta (d_\lambda R). \label{theta-d}
        \end{align}
        Hence
        \begin{align*}
            [\pi_R , \pi_R]_\mathsf{GV} 
            = [\lambda ~ \! \pi - \theta R, \lambda ~ \! \pi - \theta R]_\mathsf{GV} 
            =~& \lambda^2 [\pi, \pi]_\mathsf{GV}- \lambda ~ \! [\pi,\theta R]_\mathsf{GV}- \lambda ~ \! [\theta R, \pi]_\mathsf{GV}+ [\theta R, \theta R]_\mathsf{GV}\\
            \stackrel{(\ref{theta-D})}{=}& -2 \lambda ~ \! [\pi, \theta R]_\mathsf{GV} + \theta [R,R]_\mathsf{D}\\
            \stackrel{(\ref{theta-d})}{=}& ~2 ~ \! \theta (d_\lambda R) + \theta [R,R]_\mathsf{D}
            = 2 ~ \! \theta (d_\lambda R+ \frac{1}{2}[R,R]_\mathsf{D})
            = 0
        \end{align*}
        which shows that $\pi_R$ is a multiplication on $\mathcal{P}$. Finally, the last part follows from the defining identity of the Rota-Baxter element of weight $\lambda$, namely, $R \circ_1 \pi_R = (\pi \circ_2 R) \circ_1 R$.
    \end{proof}

\begin{proposition}
    Let $\mathcal{P}$ be a nonsymmetric operad and $\pi$ be a multiplication on $\mathcal{P}$. Suppose $R$ is a Rota-Baxter element of weight $\lambda$ for the multiplication $\pi$. Define two elements $\pi^l, \pi^r \in \mathcal{P}_2$ by
    \begin{align*}
        \pi^l := \pi \circ_1 R - R \circ_1 \pi \quad \text{ and } \quad \pi^r := \pi \circ_2 R - R \circ_1 \pi.
    \end{align*}
    Then the pair $(\pi^l, \pi^r)$ is a representation of the induced multiplication $\pi_R = \pi \circ_1 R + \pi \circ_2 R + \lambda ~\! \pi $.
\end{proposition}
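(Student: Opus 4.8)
The plan is to verify, one by one, the three defining identities of a representation of a multiplication (recalled in the Remark introducing representations) instantiated with the multiplication $\pi_R$, whose status as a multiplication was established in the preceding proposition. Abbreviating $\mu := \pi_R = \pi \circ_1 R + \pi \circ_2 R + \lambda \, \pi$, the goal is to prove
\[
\text{(I)}~ \pi^l \circ_1 \mu = \pi^l \circ_2 \pi^l, \qquad \text{(II)}~ \pi^r \circ_1 \pi^l = \pi^l \circ_2 \pi^r, \qquad \text{(III)}~ \pi^r \circ_1 \pi^r = \pi^r \circ_2 \mu,
\]
where $\pi^l = \pi \circ_1 R - R \circ_1 \pi$ and $\pi^r = \pi \circ_2 R - R \circ_1 \pi$. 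Each side lies in $\mathcal{P}_3$, so everything takes place among compositions of the degree-one element $R$ and the degree-two element $\pi$.

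The main computational input is the operadic form of the Rota-Baxter identity, which the previous proposition records as the statement that $R$ is a map from $\pi_R$ to $\pi$, namely $R \circ_1 \mu = (\pi \circ_2 R) \circ_1 R$, together with the associativity $\pi \circ_1 \pi = \pi \circ_2 \pi$ of the multiplication $\pi$ and the two associativity-type axioms of Definition \ref{nonsymm}. For each of (I)--(III) I would substitute the definitions of $\pi^l, \pi^r, \mu$ and then use the operad axioms to bring every doubly nested composition into a normal form in which two copies of $R$ occupy adjacent slots of a composite built from $\pi$'s. At that stage the Rota-Baxter relation converts the ``outer $R$ against $\mu$'' terms $R \circ_1 \mu$ into ``two adjacent $R$'' composites $(\pi \circ_2 R) \circ_1 R$, and the leading two-$R$ terms on the two sides are matched by invoking $\pi \circ_1 \pi = \pi \circ_2 \pi$; the remaining cross terms built from $R \circ_1 \pi$ and $\lambda \pi$ then cancel by associativity of $\pi$ and the operad axioms. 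For instance, in (I) one finds $(\pi \circ_1 R) \circ_1 \mu = ((\pi \circ_1 \pi) \circ_2 R) \circ_1 R$ after one use of the Rota-Baxter identity, while $(\pi \circ_1 R) \circ_2 (\pi \circ_1 R) = ((\pi \circ_2 \pi) \circ_2 R) \circ_1 R$, so the two coincide precisely by associativity of $\pi$.

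As a guide to where the Rota-Baxter relation must enter, it is worth reading the identities in the endomorphism operad $\mathrm{End}_A$ with $\pi(a,b) = a \cdot b$. There $\pi^l(a,x) = R(a) \cdot x - R(a \cdot x)$ and $\pi^r(x,a) = x \cdot R(a) - R(x \cdot a)$ are the usual ``difference'' left and right actions, while $\mu$ is the descendent product $a *_R b = R(a) \cdot b + a \cdot R(b) + \lambda \, a \cdot b$. In these terms (I), (II), (III) are exactly the left-module, bimodule-compatibility, and right-module axioms expressing that $A$ is a bimodule over $(A, *_R)$, and each reduces to the Rota-Baxter identity applied twice. This concrete picture both confirms the statement and pins down the normal form targeted in the operadic computation.

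The genuine difficulty is purely the index bookkeeping in the operadic associativity manipulations rather than any conceptual point. I expect identity (II) to be the main obstacle: it mixes $\pi^l$ and $\pi^r$, so $R$ appears in both slots of $\pi$ simultaneously, and one must commute $R$ across several composition positions using both axioms of Definition \ref{nonsymm} before the Rota-Baxter relation becomes applicable. Identity (III) is the mirror image of (I) under interchanging the roles of the first and second slots of $\pi$; one may use this left-right symmetry heuristically to anticipate the outcome, but since such an opposite structure is not formally available in a general nonsymmetric operad, it is safest to carry out the normalization for (III) directly as well.
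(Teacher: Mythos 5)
Your plan is correct and follows essentially the same route as the paper: both verify the three representation axioms for $\pi_R$ by expanding $\pi^l$, $\pi^r$, $\pi_R$ into compositions of $R$ and $\pi$, normalizing with the two operad axioms, and then eliminating the two-$R$ composites via the Rota-Baxter identity in the form $R \circ_1 \pi_R = (\pi \circ_2 R) \circ_1 R$ together with $\pi \circ_1 \pi = \pi \circ_2 \pi$. Your sample computation for the leading terms of (I) matches the paper's regrouping (the paper factors the difference as $\pi \circ_1 (R \circ_1 \pi_R - (\pi \circ_2 R) \circ_1 R) - (R \circ_1 \pi_R - (\pi \circ_2 R) \circ_1 R) \circ_2 \pi$), and your observation that the residual $\lambda$-terms cancel by associativity is exactly what occurs in the paper's verification of (II).
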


\begin{proof}
    We observe that
    \begin{align*}
        &\pi^l \circ_1 \pi_R - \pi^l \circ_2 \pi^l\\ 
        &= (\pi \circ_1 R - R \circ_1 \pi) \circ_1 (\pi \circ_1 R + \pi \circ_2 R + \lambda ~\! \pi)- (\pi \circ_1 R - R \circ_1 \pi) \circ_2 (\pi \circ_1 R - R \circ_1 \pi) \\
        &= (\pi \circ_1 R) \circ_1 (\pi \circ_1 R) + (\pi \circ_1 R)\circ_1 (\pi \circ_2 R) + \lambda ~\!(\pi \circ_1 R) \circ_1 \pi - (R \circ_1 \pi) \circ_1 (\pi \circ_1 R) \\ 
        & \quad- (R \circ_1 \pi) \circ_1 (\pi \circ_2 R) - \lambda ~\! (R \circ_1 \pi) \circ_1 \pi -(\pi \circ_1 R) \circ_2 (\pi \circ_1 R) + (\pi \circ_1 R) \circ_2 (R \circ_1 \pi)  \\
        &\quad + (R \circ_1 \pi) \circ_2 (\pi \circ_1 R)- (R \circ_1 \pi ) \circ_2 (R \circ_1 \pi)\\
        &= \pi \circ_1 (R \circ_1 (\pi \circ_1 R)) + \pi \circ_1 (R \circ_1 (\pi \circ_2 R)) + \pi \circ_1 (R \circ_1 (\lambda ~\! \pi))  - (R \circ_1 (\pi \circ_1 R)) \circ_2 \pi\\
        & \quad  - (R \cancel{\circ_1 (\pi \circ_1 \pi)) \circ_2 }R - (R \circ_1 (\lambda ~\! \pi))\circ_2 \pi - \pi \circ_1 ((\pi \circ_2 R)\circ_1 R)+ ((\pi \circ_2 R)\circ_1 R)\circ_2 \pi \\
        &\quad + (R\cancel{ \circ_1 (\pi \circ_2 \pi))\circ_2 }R - (R \circ_1 (\pi \circ_2 R)) \circ_2 \pi\\
        &= \pi \circ_1 \big(R \circ_1 \pi_R - (\pi \circ_2 R) \circ_1 R \big) - \big( R \circ_1 \pi_R - (\pi \circ_2 R)\circ_1 R \big)\circ_2 \pi =0.
    \end{align*}
    Similarly, we also have
    \begin{align*}
        &\pi^r \circ_1 \pi^l - \pi^l \circ_2 \pi^r \\
        &= (\pi \circ_2 R - R \circ_1 \pi)\circ_1 (\pi \circ_1 R - R \circ_1 \pi)- (\pi \circ_1 R - R \circ_1 \pi) \circ_2 (\pi \circ_2 R - R \circ_1 \pi) \\
        &= (\pi \circ_2 R)\circ_1 (\pi \circ_1 R)- (\pi \circ_2 R)\circ_1 (R \circ_1 \pi)- (R \circ_1 \pi)\circ_1 (\pi \circ_1 R)+ (R \circ_1 \pi)\circ_1 (R \circ_1 \pi) \\
        &\quad - (\pi \circ_1 R)\circ_2 (\pi \circ_2 R) + (\pi \circ_1 R) \circ_2 (R \circ_1 \pi) + (R \circ_1 \pi) \circ_2 (\pi \circ_2 R)- (R \circ_1 \pi) \circ_2 (R \circ_1 \pi)\\
        &= ((\cancel{\pi \circ_1 \pi)\circ_1 R)\circ_3}R - ((\pi \circ_2 R)\circ_1 R)\circ_1 \pi - (R \circ_1 (\pi \circ_1 R))\circ_2 \pi + (R \circ_1 (\pi \circ_1 R))\circ_1 \pi \\
        &\quad -((\cancel{\pi \circ_2 \pi)\circ_1 R)\circ_3} R + ((\pi \circ_2 R)\circ_1 R)\circ_2 \pi + (R \circ_1 (\pi \circ_2 R))\circ_1 \pi- (R \circ_1 (\pi \circ_2 R))\circ_2 \pi\\
        &= \big(-(\pi \circ_2 R)\circ_1 R + R \circ_1 (\pi \circ_1 R + \pi \circ_2 R)  \big)\circ_1 \pi + \big((\pi \circ_2 R)\circ_1 R - R \circ_1 (\pi \circ_1 R + \pi \circ_2 R)  \big)\circ_2 \pi \\
        &= \big( -R \circ_1 (\lambda ~\! \pi)\big) \circ_1 \pi + \big( R \circ_1 (\lambda ~\! \pi) \big)\circ_2 \pi \\
        &= \lambda ~\! R \circ_1 \big( - \pi \circ_1 \pi + \pi \circ_2 \pi \big) = 0
    \end{align*}
    and
    \begin{align*}
        &\pi^r \circ_1  \pi^r - \pi^r \circ_2 \pi_R \\
        &=(\pi \circ_2 R- R \circ_1 \pi)\circ_1 (\pi \circ_2 R- R \circ_1 \pi)- (\pi \circ_2 R- R \circ_1 \pi)\circ_2 (\pi \circ_1 R + \pi \circ_2 R + \lambda ~\! \pi) \\
        &=(\pi \circ_2 R)\circ_1 (\pi \circ_2 R)- (\pi \circ_2 R)\circ_1 (R \circ_1 \pi)- (R \circ_1 \pi)\circ_1 (\pi \circ_2 R) + (R \circ_1 \pi)\circ_1 (R \circ_1 \pi) \\
        & \quad - (\pi \circ_2 R)\circ_2 (\pi \circ_1 R)- (\pi \circ_2 R) \circ_2 (\pi \circ_2 R) - \lambda ~\! (\pi \circ_2 R)\circ_2 \pi + (R \circ_1 \pi)\circ_2 (\pi \circ_1 R)\\
        &\quad + (R \circ_1 \pi) \circ_2 (\pi \circ_2 R)+ \lambda ~\! (R \circ_1 \pi)\circ_2 \pi\\
        &= \pi \circ_2 ((\pi \circ_2 R)\circ_1 R)-((\pi \circ_2 R)\circ_1 R)\circ_1 \pi - (R \cancel{\circ_1 (\pi \circ_1 \pi))\circ_2} R + (R \circ_1 (\pi \circ_1 R))\circ_1 \pi \\
        &\quad - \pi \circ_2 (R \circ_1 (\pi \circ_1 R))- \pi \circ_2 (R \circ_1 (\pi \circ_2 R))- \pi \circ_2 (R \circ_1 (\lambda ~\! \pi)) + (R \cancel{\circ_1 (\pi \circ_2 \pi))\circ_2} R\\
        &\quad  + (R \circ_1 (\pi \circ_2 R))\circ_1 \pi + (R \circ_1 (\lambda ~\! \pi))\circ_1 \pi\\
        &= \pi \circ_2 \big( (\pi \circ_2 R) \circ_1 R - R \circ_1 \pi_R \big)+ \big(-(\pi \circ_2 R) \circ_1 R + R \circ_1 \pi_R \big)\circ_1 \pi = 0.
    \end{align*}
    Hence, the result follows.
\end{proof}

As a conclusion from the above proposition, one may consider the cochain complex $\{ \mathcal{P}_\bullet, \delta_{\pi_R; \pi^l, \pi^r} \}$ associated with the multiplication $\pi_R$ with coefficients in the above-defined representation $(\pi^l, \pi^r)$. Then we have the following result.

\begin{proposition}
    For any $f \in \mathcal{P}_n$, we have $d_R (f) = -~\!\delta_{\pi_R; \pi^l, \pi^r} (f)$, where the map $d_R$ is given in (\ref{map-dr}). As a conclusion, there is an isomorphism
    \begin{align*}
    H^\bullet_{\mathcal{P}, \pi} (R) \cong H^\bullet (\pi_R; \pi^l, \pi^r ).
\end{align*}
\end{proposition}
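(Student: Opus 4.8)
The plan is to establish the pointwise identity $d_R(f) = -\,\delta_{\pi_R; \pi^l, \pi^r}(f)$ for every $f \in \mathcal{P}_n$; the cohomology isomorphism is then immediate. Indeed, once the two coboundary maps agree up to the global sign $-1$, they have identical kernels and images in each degree (scaling by $-1$ is a linear automorphism), so the cochain complexes $\{\mathcal{P}_\bullet, d_R\}$ and $\{\mathcal{P}_\bullet, \delta_{\pi_R;\pi^l,\pi^r}\}$ have literally the same cocycles and coboundaries, whence $H^\bullet_{\mathcal{P},\pi}(R) \cong H^\bullet(\pi_R; \pi^l, \pi^r)$.

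To prove the pointwise identity I would expand both sides in terms of partial compositions and compare. On the left, I unfold $d_R(f) = -d_\lambda(f) - [R,f]_\mathsf{D}$, using $-d_\lambda(f) = \lambda\,\iota_\pi f = \lambda\sum_{i=1}^n (-1)^{i-1} f\circ_i \pi$ from (\ref{d_lambda}) and (\ref{derivation-map}). The derived bracket $[R,f]_\mathsf{D}$ (with $R\in\mathcal{P}_1$, so $m=1$) splits by (\ref{derived-bracket}) into $[R,f]_\pi + \iota_{\theta R} f - (-1)^n \iota_{\theta f} R$. Here I substitute $\theta R = -(\pi\circ_1 R + \pi\circ_2 R)$ and $\theta f = -\pi\circ_1 f + (-1)^n\pi\circ_2 f$ from (\ref{theta}), use $f\cup_\pi g = (\pi\circ_2 g)\circ_1 f$ from (\ref{cup-pro}) to write out $[R,f]_\pi$, and expand each contraction explicitly. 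On the right, I unfold $\delta_{\pi_R;\pi^l,\pi^r}(f)$ directly from (\ref{delta-pi-lr}) and substitute the three ingredients $\pi^l = \pi\circ_1 R - R\circ_1\pi$, $\pi^r = \pi\circ_2 R - R\circ_1\pi$, and $\pi_R = \pi\circ_1 R + \pi\circ_2 R + \lambda\pi$.

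The reconciling step is to bring both expanded expressions to a common normal form using the two operad associativity axioms of Definition \ref{nonsymm}. Concretely, I rewrite the compound terms coming from the $R\circ_1\pi$ summands of $\pi^l,\pi^r$ via $(R\circ_1\pi)\circ_1 f = R\circ_1(\pi\circ_1 f)$, $(R\circ_1\pi)\circ_2 f = R\circ_1(\pi\circ_2 f)$ and $(\pi\circ_1 R)\circ_2 f = (\pi\circ_2 f)\circ_1 R$; after these substitutions both sides become the same linear combination of the atomic terms $(\pi\circ_2 R)\circ_1 f$, $(\pi\circ_2 f)\circ_1 R$, $R\circ_1(\pi\circ_1 f)$, $R\circ_1(\pi\circ_2 f)$, $f\circ_i(\pi\circ_1 R)$, $f\circ_i(\pi\circ_2 R)$ and $f\circ_i\pi$, and a term-by-term comparison closes the argument. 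The only real difficulty is the sign bookkeeping: tracking the factors $(-1)^n$ and $(-1)^{i-1}$ through the contractions and matching the degree-shift sign $(-1)^{n+1}$ appearing in (\ref{delta-pi-lr}), together with choosing the correct indices $i<j$ when invoking the associativity axioms so that the compound terms line up. No structural input beyond the operad identities, the definitions of $\theta$, $\cup_\pi$ and $\iota$, and the explicit forms of $\pi^l,\pi^r,\pi_R$ is required.
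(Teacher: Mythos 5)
Your proposal is correct and follows essentially the same route as the paper: the paper's proof is precisely the direct calculation you outline, expanding $d_R(f) = -(d_\lambda f + [R,f]_\mathsf{D})$ via the definitions of $d_\lambda$, $\theta$, the cup bracket and the contraction, rewriting the compound terms with the two operad associativity axioms (exactly the identities $(R\circ_1\pi)\circ_1 f = R\circ_1(\pi\circ_1 f)$, $(R\circ_1\pi)\circ_2 f = R\circ_1(\pi\circ_2 f)$, $(\pi\circ_2 f)\circ_1 R = (\pi\circ_1 R)\circ_2 f$ you cite), and recognizing the result as $-\delta_{\pi_R;\pi^l,\pi^r}(f)$. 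Your remark that the cohomology isomorphism is immediate because the two differentials agree up to a global sign is also exactly what the paper leaves implicit.
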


\begin{proof}
    By a direct calculation, we have
    \begin{align*}
        &\! d_R (f) \\
        &= - ( d_\lambda f + [R,f]_\mathsf{D} )\\
        &= \lambda ~\! \iota_\pi f - [R,f]_\pi - \iota_{\theta R} f + (-1)^n ~\! \iota_{\theta f} R \\
        &= \lambda ~\! \sum_{i=1}^n (-1)^{i-1}~\! f \circ_i \pi - (\pi \circ_2 f)\circ_1 R + (-1)^n ~\!(\pi \circ_2 R) \circ_1 f + \sum_{i=1}^n (-1)^{i-1}~\! f \circ_i (\pi \circ_1 R + \pi \circ_2 R)\\
        & \quad + (-1)^n ~\!R \circ_1(-\pi \circ_1 f + (-1)^{n}\pi \circ_2 f)\\
        &= - \lambda ~\! \sum_{i=1}^n (-1)^{i}~\! f \circ_i \pi - (\pi \circ_1 R)\circ_2 f - (-1)^{n+1} ~\! (\pi \circ_2 R)\circ_1 f - \sum_{i=1}^n (-1)^{i}~\! f \circ_i (\pi \circ_1 R + \pi \circ_2 R)\\
        & \quad + (-1)^{n+1}~\! (R \circ_1 \pi)\circ_1 f + (R \circ_1 \pi) \circ_2 f\\
        &= - \big\{ (-1)^{n+1}~\! (\pi \circ_2 R - R \circ_1 \pi)\circ_1 f + (\pi \circ_1 R - R \circ_1 \pi) \circ_2 f + \sum_{i=1}^n (-1)^i~\! f \circ_i (\pi \circ_1 R + \pi \circ_2 R + \lambda ~\! \pi) \big\}\\
        &= - ~ \! \delta_{\pi_R; \pi^l, \pi^r} (f).
    \end{align*}
    Hence the result follows.
\end{proof}


    Next, let $R$ be a Rota-Baxter element of weight $0$ for the multiplication $\pi$. For each $n \geq 1$, we set a linear map $\Upsilon_n : \mathcal{P}_n \rightarrow \mathcal{P}_{n+1}$ by $\Upsilon_n (f) = (-1)^n ~ \! \theta f$, for $f \in \mathcal{P}_n$. Then we have
    \begin{align*}
        (\delta_{\pi_R} \circ \Upsilon_n) (f) =~& (-1)^{n+1} ~ \! [\pi_R, \theta f ]_\mathsf{GV}\\
        =~& (-1)^n ~ \! [\theta R, \theta f]_\mathsf{GV}\\
        \stackrel{(\ref{theta-D})}{=}& ~\!(-1)^n ~ \! \theta ([R, f]_\mathsf{D}) = (-1)^{n+1} ~ \! \theta (d_R f) = (\Upsilon_{n+1} \circ d_R)(f).
    \end{align*}
    Hence, we obtain the following result.

    \begin{proposition}
        Let $R$ be a Rota-Baxter element of weight $0$ for the multiplication $\pi$ in a nonsymmetric operad $\mathcal{P}$. Then there is a homomorphism $H^\bullet_{\mathcal{P}, \pi} (R) \rightarrow H^{\bullet +1}_{\mathcal{P}} (\pi_R)$ from the cohomology associated with the Rota-Baxter element $R$ to the cohomology associated with the induced multiplication $\pi_R$.
    \end{proposition}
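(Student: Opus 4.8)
The plan is to read the displayed computation immediately preceding the statement as the assertion that the family $\Upsilon = (\Upsilon_n)_{n \geq 1}$, with $\Upsilon_n(f) = (-1)^n \, \theta f$, is a cochain map of degree $+1$ from the complex $\{\mathcal{P}_\bullet, d_R\}$ (which computes $H^\bullet_{\mathcal{P}, \pi}(R)$) to the complex $\{\mathcal{P}_\bullet, \delta_{\pi_R}\}$ (which computes $H^\bullet_\mathcal{P}(\pi_R)$). Indeed, that computation establishes the intertwining identity
\begin{align*}
\delta_{\pi_R} \circ \Upsilon_n = \Upsilon_{n+1} \circ d_R, \quad \text{for all } n \geq 1.
\end{align*}
Since each $\Upsilon_n$ raises the internal degree by one, this is exactly the condition needed for $\Upsilon$ to descend to a map $H^\bullet_{\mathcal{P},\pi}(R) \to H^{\bullet+1}_\mathcal{P}(\pi_R)$.

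Granting this identity, the remainder is the standard formal verification that a cochain map induces a map on cohomology. First I would check that $\Upsilon$ sends cocycles to cocycles: if $f \in \mathcal{P}_n$ satisfies $d_R(f) = 0$, then $\delta_{\pi_R}(\Upsilon_n f) = \Upsilon_{n+1}(d_R f) = 0$, so $\Upsilon_n(f)$ is a $\delta_{\pi_R}$-cocycle in degree $n+1$. Next I would check that coboundaries go to coboundaries: if $f = d_R(g)$ for some $g \in \mathcal{P}_{n-1}$, then applying the identity with $n$ replaced by $n-1$ gives $\Upsilon_n(f) = \Upsilon_n(d_R g) = \delta_{\pi_R}(\Upsilon_{n-1} g)$, which is a $\delta_{\pi_R}$-coboundary. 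Together these show that $[f] \mapsto [\Upsilon_n f]$ is a well-defined linear map $H^n_{\mathcal{P},\pi}(R) \to H^{n+1}_\mathcal{P}(\pi_R)$, which is the asserted homomorphism.

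The only genuine content sits in the intertwining identity itself, and it has already been carried out in the lines preceding the statement. I would highlight its three ingredients: that for weight $0$ one has $\pi_R = \pi \circ_1 R + \pi \circ_2 R = -\theta R$ (so that $\delta_{\pi_R} = -[\pi_R, -]_\mathsf{GV}$ reduces the computation to brackets against $\theta R$); the homomorphism property $[\theta R, \theta f]_\mathsf{GV} = \theta[R, f]_\mathsf{D}$ supplied by equation (\ref{theta-D}) of Proposition \ref{D-GV}; and the simplification $d_R(f) = -[R, f]_\mathsf{D}$, valid because $d_\lambda = 0$ when $\lambda = 0$. Tracking the signs through these three substitutions produces exactly $\delta_{\pi_R}\Upsilon_n = \Upsilon_{n+1} d_R$. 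I expect no real obstacle, since the delicate sign-bookkeeping in the bracket identities is the only subtle point and it is fully absorbed by Proposition \ref{D-GV}; the passage to cohomology is then automatic.
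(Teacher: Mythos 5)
Your proposal is correct and takes essentially the same route as the paper: the displayed computation preceding the statement \emph{is} the paper's proof, establishing $\delta_{\pi_R} \circ \Upsilon_n = \Upsilon_{n+1} \circ d_R$ from exactly the three ingredients you identify ($\pi_R = -\theta R$ at weight $0$, the identity $\theta[R,f]_\mathsf{D} = [\theta R, \theta f]_\mathsf{GV}$, and $d_R(f) = -[R,f]_\mathsf{D}$ because $d_\lambda$ vanishes for $\lambda = 0$). Your explicit check that the resulting degree-one cochain map sends cocycles to cocycles and coboundaries to coboundaries is just the routine step the paper leaves implicit.
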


\subsection{Averaging elements}
In this subsection, we consider averaging elements for a multiplication $\pi$ in a given nonsymmetric operad. Such elements generalize averaging operators on associative algebras \cite{das-avg,pei-guo}. Then, with the help of planar binary trees, we construct a graded Lie algebra whose Maurer-Cartan elements are precisely averaging elements for the multiplication $\pi$. 

Let $\mathcal{P}$ be a nonsymmetric operad and $\pi$ be a multiplication on $\mathcal{P}$. An element $r \in \mathcal{P}_1$ is said to be an {\bf averaging element} for the multiplication $\pi$ if 
\begin{align}
    (\pi \circ_2 r) \circ_1 r = r \circ_1 (\pi \circ_1 r) = r \circ_1 (\pi \circ_2 r).
\end{align}

\begin{remark}
    Let $\mathcal{P} = \mathrm{End}_A$ be the endomorphism operad and let $\pi \in \mathcal{P}_2 = (\mathrm{End}_A)_2$ be a multiplication on $\mathcal{P}$ representing the associative algebra structure $ (A, ~\! 
 \cdot ~ \!)$ on the vector space $A$. Then an element $r \in (\mathrm{End}_A)_1 = \mathrm{Hom}(A, A)$ is an averaging element for the multiplication $\pi$ if and only if $r$ is an averaging operator on the associative algebra $ (A, ~\! \cdot ~ \!)$ \cite{das-avg,pei-guo}, i.e.,
 \begin{align*}
     r (a) \cdot r(b) = r (r(a) \cdot b) = r (a \cdot r(b)), \text{ for all } a, b \in A.
 \end{align*}
 Thus, an averaging element for a given multiplication generalizes averaging operators on associative algebras.
\end{remark}

\begin{proposition}
    Let $r \in \mathcal{P}_1$ be an averaging element for the multiplication $\pi$ in a nonsymmetric operad $\mathcal{P}$. Then the elements $\pi_r^{\dashv} := \pi \circ_2 r$ and $\pi_r^{\vdash} := \pi \circ_1 r$ both are multiplications on $\mathcal{P}$. Moreover, they additionally satisfy the compatibilities
    \begin{align}\label{comp-di}
        \pi_r^{\dashv} \circ_2 \pi_r^\dashv = \pi_r^{\dashv} \circ_2 \pi_r^\vdash, \qquad \pi_r^{\dashv} \circ_1 \pi_r^\vdash = \pi_r^{\vdash} \circ_2 \pi_r^\dashv \quad  \text{ and } \quad   \pi_r^{\vdash} \circ_1 \pi_r^\vdash = \pi_r^{\vdash} \circ_1 \pi_r^\dashv.
    \end{align}
\end{proposition}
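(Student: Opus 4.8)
The plan is to fix the shorthand $A := \pi_r^{\dashv} = \pi \circ_2 r$ and $B := \pi_r^{\vdash} = \pi \circ_1 r$, both elements of $\mathcal{P}_2$, and to record the averaging hypothesis in the compact form $A \circ_1 r = r \circ_1 B = r \circ_1 A$. Each assertion of the proposition is then an equality between two elements of $\mathcal{P}_3$, and my strategy is to reduce the two sides of each such equality to a common normal form by repeatedly applying the two defining identities of a nonsymmetric operad, namely the nesting identity $f \circ_i (g \circ_j h) = (f \circ_i g) \circ_{i+j-1} h$ and the disjointness identity $(f \circ_i g) \circ_{j+n-1} h = (f \circ_j h) \circ_i g$ (for $i < j$ and $g \in \mathcal{P}_n$), supplemented by the multiplication relation $\pi \circ_1 \pi = \pi \circ_2 \pi$ and the averaging relations above.

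First I would extract three elementary rewriting rules. The disjointness identity yields the two swap rules $(\pi \circ_2 r) \circ_1 \xi = (\pi \circ_1 \xi) \circ_{k+1} r$ for $\xi \in \mathcal{P}_k$ and $(\pi \circ_1 r) \circ_2 \xi = (\pi \circ_2 \xi) \circ_1 r$, which slide the factor $r$ past an insertion made into the complementary input of $\pi$; the nesting identity yields $(\pi \circ_i r) \circ_i \xi = \pi \circ_i (r \circ_1 \xi)$ for $i \in \{1,2\}$, which pushes an insertion inside $r$. With these rules, showing that $A$ is a multiplication reduces $A \circ_1 A$ and $A \circ_2 A$ to the single element $(\pi \circ_2 (\pi \circ_1 r)) \circ_3 r$: the $\circ_1$ side needs a swap rule, the nesting identity and the multiplication relation, while the $\circ_2$ side needs the nesting identity, the averaging relation $r \circ_1 A = A \circ_1 r$ and a swap rule. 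The multiplicativity of $B$ is completely parallel, both $B \circ_1 B$ and $B \circ_2 B$ collapsing to $(\pi \circ_1 (\pi \circ_2 r)) \circ_1 r$ by the same ingredients.

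For the three compatibilities in (\ref{comp-di}) I would proceed identically. The first, $A \circ_2 A = A \circ_2 B$, follows once both sides are brought by the nesting rule to $\pi \circ_2 (r \circ_1 (\pi \circ_2 r))$, the only substantive input being the averaging identity $r \circ_1 B = r \circ_1 A$; the third, $B \circ_1 B = B \circ_1 A$, is symmetric and both sides reduce to $\pi \circ_1 (r \circ_1 (\pi \circ_1 r))$ using nesting and the same averaging identity. The middle compatibility $A \circ_1 B = B \circ_2 A$ is the only one not requiring the averaging relation: applying the two swap rules, the nesting identity and $\pi \circ_1 \pi = \pi \circ_2 \pi$ reduces the two sides to $((\pi \circ_2 \pi) \circ_1 r) \circ_3 r$ and $((\pi \circ_2 \pi) \circ_3 r) \circ_1 r$, which coincide by one final application of the disjointness identity with $i = 1 < 3 = j$ and $g = h = r$.

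Every individual computation is short, so the real content of the argument lies in the index bookkeeping. The main obstacle I anticipate is keeping the insertion positions correct through each rewrite: the disjointness identity carries the shift $j + n - 1$, so one must track carefully how inserting $r$ (of arity $1$) as opposed to inserting $\pi$, $\pi \circ_i r$ or $\pi \circ_i \pi$ (of arity $2$ or $3$) renumbers the surviving slots. Equally important is choosing, for each of the six identities, the right target expression into which both sides visibly collapse; once that normal form is fixed, each step is a forced application of one of the three rewriting rules, the multiplication relation, or an averaging relation.
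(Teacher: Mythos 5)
Your proposal is correct and follows essentially the same route as the paper: both arguments are direct computations with the two operad axioms, the relation $\pi \circ_1 \pi = \pi \circ_2 \pi$, and the averaging identities, and your reductions (e.g.\ of $\pi_r^{\dashv} \circ_1 \pi_r^{\dashv}$ and $\pi_r^{\dashv} \circ_2 \pi_r^{\dashv}$ to the common normal form $(\pi \circ_2 (\pi \circ_1 r)) \circ_3 r$) match the paper's chains of equalities step for step. You in fact do slightly more than the paper, which verifies only the two multiplicativity statements explicitly and dismisses the three compatibilities of (\ref{comp-di}) with ``similarly''; your explicit treatment of those, including the correct observation that the middle compatibility $\pi_r^{\dashv} \circ_1 \pi_r^{\vdash} = \pi_r^{\vdash} \circ_2 \pi_r^{\dashv}$ uses the multiplication relation but not the averaging identity, checks out.
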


\begin{proof}
    We have
    \begin{align*}
        \pi_r^\dashv \circ_1 \pi_r^\dashv &= (\pi \circ_2 r)\circ_1 (\pi \circ_2 r) = ( \pi \circ_1 (\pi \circ_2 r))\circ_3 r = ((\pi \circ_1 \pi) \circ_2 r ) \circ_3 r= ((\pi \circ_2 \pi) \circ_2 r ) \circ_3 r       \\
        & =(\pi \circ_2(\pi \circ_1 r))\circ_3 r  =\pi \circ_2 ((\pi \circ_2 r)\circ_1 r)=\pi \circ_2 (r \circ_1 (\pi \circ_2 r))
        = (\pi \circ_2 r) \circ_2 (\pi \circ_2 r)\\ &  = \pi_r^\dashv \circ_2 \pi_r^\dashv
    \end{align*}
    and 
     \begin{align*}
        \pi_r^\vdash \circ_1 \pi_r^\vdash &= (\pi \circ_1 r) \circ_1 (\pi \circ_1 r)= \pi \circ_1(r \circ_1 (\pi \circ_1 r) )= \pi \circ_1 ((\pi \circ_2 r)\circ_1 r) =(\pi \circ_1 (\pi \circ_1 r))\circ_2 r  \\
        &=((\pi \circ_1 \pi)\circ_1 r)\circ_2 r= ((\pi \circ_2 \pi)\circ_1 r)\circ_2 r= ((\pi \circ_1 r)\circ_2 \pi )\circ_2 r=  (\pi \circ_1 r)\circ_2 (\pi \circ_1 r)\\& = \pi_r^\vdash \circ_2 \pi_r^\vdash.
    \end{align*}
    This shows that $\pi_r^\vdash$ and $\pi_r^\dashv$ are multiplications on $\mathcal{P}$. Similarly, one can check the compatibility conditions of (\ref{comp-di}).
\end{proof}

Let $\mathcal{P} = (\mathcal{P}, \circ, \mathbbm{1})$ be a nonsymmetric operad and $\pi$ be a multiplication on $\mathcal{P}$. In the following, we construct a graded Lie algebra whose Maurer-Cartan elements correspond to averaging elements for the multiplication $\pi$. At first, for each $ n\geq 1$, let $Y_n$ be the set of all planar binary trees with $n+1$ leaves, one root and each interior vertex trivalent. That is,

\medskip

\medskip

\begin{center}
 $ {}^{ Y_1 ~ =~ \Big\{ }$   
\begin{tikzpicture}[scale=0.2]
\draw (-4,-2)-- (-2,0); \draw (-4,-2) -- (-4,-4); \draw (-6,0) -- (-4,-2);
\end{tikzpicture}
${}^\Big\}$, \qquad 
$ {}^{ Y_2 ~ =~ \Big\{ }$ 
\begin{tikzpicture}[scale=0.2]
\draw (6,0) -- (8,-2);    \draw (8,-2) -- (10,0);     \draw (8,-2) -- (8,-4);     \draw (9,-1) -- ( 8,0);
\end{tikzpicture} ~,~ 
\begin{tikzpicture}[scale=0.2]
\draw (0,0) -- (2,-2); \draw (2,-2) -- (4,0); \draw (2,-2) -- (2,-4); \draw (1,-1) -- (2,0); 
\end{tikzpicture}
${}^\Big\}$, \qquad $ {}^{ Y_3 ~ =~ \Big\{ }$
\begin{tikzpicture}[scale=0.2]  
   \draw (36,0) -- (38, -2) ; \draw (38, -2) -- (38, -4) ; \draw (38, -2) -- (40, 0); \draw (37.33, 0) -- ( 38.67, - 1.33) ; \draw (38.66, 0) -- (39.34, -0.66);	
\end{tikzpicture} ~,~
\begin{tikzpicture}[scale=0.2]  
  \draw (30,0) -- (32,-2); \draw (32, -2) -- (32, -4); \draw (32,-2) -- (34, 0); \draw (31.33, 0) -- (32.67 , -1.33) ; \draw (32.66, 0) -- (32, -0.66); 
\end{tikzpicture} ~,~
\begin{tikzpicture}[scale=0.2] 
  \draw (24,0)-- (26,-2); \draw (26,-2)-- (26,-4); \draw (26,-2) -- (28,0); \draw (25.33, 0) -- (24.66, -0.66); \draw (26.66, 0) -- (27.34, -0.66); 
\end{tikzpicture} ~,~
\begin{tikzpicture}[scale=0.2]
 \draw (18,0) -- (20,-2); \draw (20, -2) -- (20, -4); \draw (20, -2) -- (22,0); \draw (19.33, -1.33) -- (20.66, 0); \draw (19.33, 0) -- (20, -0.66);
\end{tikzpicture} ~,~
 \begin{tikzpicture}[scale=0.2]
	     \draw (12,0)-- (14,-2); \draw (14,-2) -- (14,-4); \draw (14, -2) -- (16,0); \draw (12.7, - 0.7) -- (13.3333, 0) ; \draw (13.33, -1.33) -- (14.66, 0);
	     \end{tikzpicture} ${}^\Big\}$, \quad \text{etc.}
\end{center}

\medskip

\medskip

\noindent For each $T \in Y_n$, we label the $(n+1)$ leaves of $T$ by $\{ 0, 1, \ldots, n \}$ from left to right. Hence, for any $0 \leq i \leq n$, there is a map $d_i: Y_n \rightarrow Y_{n-1}$ obtained by deleting the $i$-th leaf. Using these maps, for any $m, n \geq 1$ and $1 \leq i \leq m$, one may define maps $R_0^{m; i, n} : Y_{m+n-1} \rightarrow Y_m$ and $R_i^{m;i, n} : Y_{m+n-1} \rightarrow Y_n$ by
\begin{align*}
    R_0^{m;i, n} (T) :=~& (d_i  d_{i+1}  ~\! \cdots ~ \! d_{i+n-2}) (T),\\
    R_i^{m; i, n} (T) :=~& (d_0  d_1 ~\! \cdots ~\! d_{i-2} d_{i + n } ~\! \cdots ~ \! d_{m+n-1}) (T),
\end{align*}
for $T \in Y_{m+n-1}$. For any $n \geq 1$, we set $\mathcal{Q}_n = \mathrm{Hom} ({\bf k}[Y_n], \mathcal{P}_n)$. Given any $F \in \mathcal{Q}_m$, $G \in \mathcal{Q}_n$ and $1 \leq i \leq m$, we define an element $F \circ^\mathcal{Q}_i G \in \mathcal{Q}_{m+n-1}$ by
\begin{align*}
    (F \circ^\mathcal{Q}_i G)(T) =  F (R_0^{m;i, n} (T)) ~ \!  \circ_i ~ \! G (R_i^{m; i, n} (T)), \text{ for } T \in Y_{m+n-1}.
\end{align*}
Then similar to \cite{das}, one can show that the collection $\mathcal{Q} = \{ \mathcal{Q}_n \}_{n \geq 1}$ endowed with the partial compositions $\circ_i^\mathcal{Q}$ defined above forms a nonsymmetric operad with the identity element $\mathbbm{1}^\mathcal{Q} \in \mathcal{Q}_1$ being given by $\mathbbm{1}^\mathcal{Q} ( ~ \! $\begin{tikzpicture}[scale=0.1]
\draw (-4,-2)-- (-2,0); \draw (-4,-2) -- (-4,-4); \draw (-6,0) -- (-4,-2);
\end{tikzpicture}$ ~ \! ) = \mathbbm{1}$. 

It is important to note that $\pi \in \mathcal{P}_2$ is a multiplication on the operad $\mathcal{P}$ if and only if the element $\pi^\mathcal{Q} \in \mathcal{Q}_2$ defined by
\begin{align*}
    \pi^\mathcal{Q} (\begin{tikzpicture}[scale=0.1]
\draw (6,0) -- (8,-2);    \draw (8,-2) -- (10,0);     \draw (8,-2) -- (8,-4);     \draw (9,-1) -- ( 8,0);
\end{tikzpicture}) =  \pi^\mathcal{Q} (\begin{tikzpicture}[scale=0.1]
\draw (0,0) -- (2,-2); \draw (2,-2) -- (4,0); \draw (2,-2) -- (2,-4); \draw (1,-1) -- (2,0); 
\end{tikzpicture}) := \pi
\end{align*}
is a multiplication on the operad $\mathcal{Q}$. We now define a map $\theta^\mathcal{Q} : \mathcal{Q}_m \rightarrow \mathcal{Q}_{m+1}$ by
\begin{align*}
    (\theta^\mathcal{Q} F)(T) = \begin{cases}
        (-1)^m ~\! (\pi^\mathcal{Q} \circ_2^\mathcal{Q} F)(T) = (-1)^m ~\! \pi \circ_2 F (R^{2;2,m}_2 (T))  & \text{ if } R_0^{2;2, m} (T) = \begin{tikzpicture}[scale=0.1]
\draw (6,0) -- (8,-2);    \draw (8,-2) -- (10,0);     \draw (8,-2) -- (8,-4);     \draw (9,-1) -- ( 8,0);
\end{tikzpicture} \\
~ - (\pi^\mathcal{Q} \circ_1^\mathcal{Q} F) (T) = 
 - \pi \circ_1 F (R^{2;1, m}_1 (T) ) & \text{ if } R_0^{2;1, m} (T) =  \begin{tikzpicture}[scale=0.1]
\draw (0,0) -- (2,-2); \draw (2,-2) -- (4,0); \draw (2,-2) -- (2,-4); \draw (1,-1) -- (2,0); 
\end{tikzpicture},
    \end{cases}
\end{align*}
for $F \in \mathcal{Q}_m$ and $T \in Y_{m+1}$. Hence, following the previous derived bracket construction and the bracket given in \cite{das-avg} controlling averaging operators on associative algebras, we define a new derived bracket $ [ \! \! [ ~, ~ ] \! \! ]_\mathsf{D}: \mathcal{Q}_m \times \mathcal{Q}_n \rightarrow \mathcal{Q}_{m+n}$ by
\begin{align*}
    [ \! \! [ F, G ] \! \! ]_\mathsf{D} =~& [F, G]_{\pi^\mathcal{Q}} + \iota_{(\theta^\mathcal{Q} F)} G - (-1)^{mn} ~ \! \iota_{(\theta^\mathcal{Q} G)} F, \\
    =~& (\pi^\mathcal{Q} \circ_2^\mathcal{Q} G) \circ_1^\mathcal{Q} F - (-1)^{mn} ~ \! (\pi^\mathcal{Q} \circ_2^\mathcal{Q} F) \circ_1^\mathcal{Q} G \\
    & \quad + \sum_{i=1}^n (-1)^{(i-1)m} ~\!  G \circ_i^\mathcal{Q} \theta^\mathcal{Q} F - (-1)^{mn} ~\! \sum_{i=1}^m (-1)^{(i-1)n} ~\!  F \circ_i^\mathcal{Q} \theta^\mathcal{Q} G,
\end{align*}
for $F \in \mathcal{Q}_m$, $G \in \mathcal{Q}_n$. Explicitly, it is given by
\begin{align}\label{avg-derived}
    [ \! \! [ F, G ] \! \! ]_\mathsf{D} (T) =~& \big( \pi \circ_2 G ( R^{2;2, n}_2 R^{n+1; 1, m}_0 (T) )   \big) \circ_1 F (R_1^{n+1;1, m}(T)) \\
    & - (-1)^{mn} ~\!  \big( \pi \circ_2 F ( R^{2;2, m}_2 R^{m+1; 1, n}_0 (T) )   \big) \circ_1 G (R_1^{m+1;1, n}(T)) \nonumber \\
    &+ \sum_{i=1}^n (-1)^{(i-1) m} ~\!  G (R^{n; i, m+1}_0 (T)) \circ_i (\theta^\mathcal{Q} F) (R_i^{n; i, m+1} (T)) \nonumber\\
    & - (-1)^{mn} ~ \! \sum_{i=1}^m (-1)^{(i-1) n} ~\!  F (R^{m; i, n+1}_0 (T)) \circ_i (\theta^\mathcal{Q} G) (R_i^{m; i, n+1} (T)), \nonumber
\end{align}
for $T \in Y_{m+n}.$ Then it turns out that $(\mathcal{Q}_\bullet , [ \! \! [ ~ , ~] \! \! ]_\mathsf{D})$ is also a graded Lie algebra. Note that an element $r \in \mathcal{P}_1$ can be regarded as an element $\widetilde{r} \in \mathcal{Q}_1$ simply by $\widetilde{r} ( \begin{tikzpicture}[scale=0.1]
\draw (-4,-2)-- (-2,0); \draw (-4,-2) -- (-4,-4); \draw (-6,0) -- (-4,-2);
\end{tikzpicture}) = r$. Then from (\ref{avg-derived}), we have
\begin{align*}
    [ \! \! [ \widetilde{r}, \widetilde{r} ] \! \! ]_\mathsf{D} ( \begin{tikzpicture}[scale=0.1]
\draw (6,0) -- (8,-2);    \draw (8,-2) -- (10,0);     \draw (8,-2) -- (8,-4);     \draw (9,-1) -- ( 8,0);
\end{tikzpicture} )  = 2 ~\! \big(  (\pi \circ_2 r) \circ_1 r - r \circ_1 (\pi \circ_2 r)   \big) ~~~ \text{ and } ~~~ 
 [ \! \! [ \widetilde{r}, \widetilde{r} ] \! \! ]_\mathsf{D} (  \begin{tikzpicture}[scale=0.1]
\draw (0,0) -- (2,-2); \draw (2,-2) -- (4,0); \draw (2,-2) -- (2,-4); \draw (1,-1) -- (2,0); 
\end{tikzpicture} )  = 2 ~\! \big(  (\pi \circ_2 r) \circ_1 r - r \circ_1 (\pi \circ_1 r)   \big).
\end{align*}
This shows that $r$ is an averaging element for the multiplication $\pi$ if and only if $\widetilde{r}$ is a Maurer-Cartan element in the graded Lie algebra $(\mathcal{Q}_\bullet, [ \! \! [ ~, ~] \! \! ]_\mathsf{D})$.

Let $\mathcal{P}$ be a nonsymmetric operad and $\pi$ be a multiplication on $\mathcal{P}$. Suppose $r$ is an averaging element for the multiplication $\pi$. For any $n \geq 1$, we define a map $d_r : \mathcal{Q}_n \rightarrow \mathcal{Q}_{n+1}$ by
\begin{align*}
    d_r : \mathcal{Q}_n \rightarrow \mathcal{Q}_{n+1} ~~~ \text{ by } ~~~ d_r (F) = - [ \! \! [ \widetilde{r}, F] \! \! ]_\mathsf{D}, \text{ for } F \in \mathcal{Q}_n.
\end{align*}
Then $\{ \mathcal{Q}_\bullet, d_r \}$ is a cochain complex whose cohomology groups are called the cohomologies of the averaging element $r$.


\medskip

\section{Fr\"{o}licher-Nijenhuis bracket and derived bracket for some Loday-type algebras}

{In this section}, we explicitly write the Fr\"{o}licher-Nijenhuis bracket $[~,~]_\mathsf{FN}$, derived bracket $[~,~]_\mathsf{D}$ and the differential map $d_\lambda$ in the context of some Loday-type algebras.

\medskip

Let $(A, ~ \! \cdot ~ \!)$ be an associative algebra. This associative structure on $A$ is equivalent to having the multiplication $\pi \in (\mathrm{End}_A)_2$ defined by $\pi (a, b) = a \cdot b$ in the endomorphism operad. Hence, by considering the endomorphism operad $\mathcal{P} = \mathrm{End}_A$ along with this multiplication $\pi$, we can write the explicit expressions of the Fr\"{o}licher-Nijenhuis bracket and the derived brackets for the associative algebra $(A, ~\! \cdot ~\! )$. 
More precisely, for $f \in (\mathrm{End}_A)_m, ~ \! g \in (\mathrm{End}_A)_n$ and $a_1, \ldots , a_{m+n} \in A$, the formula (\ref{FN-bracket}) gives us
    \begin{align}\label{asso-fn-exp}
        [f,g]_\mathsf{FN}(a_1, \ldots , a_{m+n}) &= f(a_1, \ldots , a_m )\cdot g(a_{m+1}, \ldots ,a_{m+n})- (-1)^{mn} ~ \!  g(a_1, \ldots , a_n )\cdot f(a_{n+1}, \ldots , a_{m+n} )  \nonumber \\
        &\quad + (-1)^m ~ \! \sum_{i=1}^n (-1)^{(i-1)m} ~\! g ( a_1, \ldots , a_{i-1}, \delta_\pi f (a_i, \ldots , a_{i+m}), a_{i+m+1}, \ldots , a_{m+n} ) \nonumber \\
        &\quad - (-1)^{(m+1)n} ~ \! \sum_{i=1}^m (-1)^{(i-1)n} ~ \! f (a_1, \ldots , a_{i-1}, \delta_\pi g (a_i, \ldots , a_{i+n}), a_{i+n+1}, \ldots , a_{m+n} ), 
    \end{align}
    where $\delta_\pi$ is the classical Hochschild coboundary operator given by
    \begin{align*}
        \delta_\pi f (a_1, \ldots , a_{m+1}) &= a_1 \cdot f(a_2, \ldots, a_{m+1}) + (-1)^{m+1} ~\!  f (a_1, \ldots , a_m)\cdot a_{m+1}\\
        &\quad + \sum_{i=1}^m (-1)^i ~ \!  f(a_1, \ldots, a_{i-1}, a_i \cdot a_{i+1}, a_{i+2}, \ldots , a_{m+1}).
    \end{align*}
    Similarly, from (\ref{derived-bracket}) and (\ref{d_lambda}), we obtain
    \begin{align}\label{derived-das}
        &[f,g]_\mathsf{D}(a_1, \ldots , a_{m+n})  \\
        &= f(a_1, \ldots , a_m )\cdot g(a_{m+1}, \ldots ,a_{m+n})- (-1)^{mn} ~\! g(a_1, \ldots , a_n )\cdot f(a_{n+1}, \ldots , a_{m+n} ) \nonumber \\
        & - \sum_{i=1}^n (-1)^{(i-1)m} ~\! g \big (a_1, \ldots , a_{i-1}, {f(a_i, \ldots , a_{i+m-1})\cdot a_{i+m}+(-1)^{m-1} ~\!  a_i \cdot f(a_{i+1}, \ldots , a_{i+m}) },  \ldots , a_{m+n} \big) \nonumber \\
        &+(-1)^{mn} \sum_{i=1}^m (-1)^{(i-1)n} ~\!  f \big( a_1, \ldots , a_{i-1}, {g(a_i, \ldots , a_{i+n-1})\cdot a_{i+n}+(-1)^{n-1} a_i \cdot g(a_{i+1}, \ldots , a_{i+n})},  \ldots , a_{m+n} \big), \nonumber
    \end{align}
    \begin{align}
        d_\lambda f (a_1, \ldots , a_{m+1} )= \lambda \sum_{i=1}^m (-1)^i ~ \! f (a_1, \ldots , a_{i-1}, a_i \cdot a_{i+1}, a_{i+2}, \ldots , a_{m+1}).
    \end{align}
    Note that the bracket (\ref{derived-das}) was also constructed in \cite{das-rota} using Voronov's derived bracket approach.


   \medskip

   The notion of dendriform algebras was introduced by Loday \cite{loday-di} as the Koszul dual of diassociative algebras. Recall that a {\bf dendriform algebra} is a triple $(A, \prec, \succ)$ consisting of a vector space $A$ endowed with two bilinear operations $\prec, \succ : A \times A \rightarrow A$ satisfying
   \begin{align*}
       (a \prec b ) \prec c =  a \prec (b \prec c + b \succ c), \qquad (a \succ b) \prec c = a \succ (b \prec c), \qquad (a \prec b + a \succ b) \succ c = a \succ (b \succ c),
   \end{align*}
   for all $a, b, c \in A$. To understand the Fr\"{o}licher-Nijenhuis bracket and the derived brackets for a dendriform algebra, we need to first recall the nonsymmetric operad whose multiplications correspond to dendriform algebra structures on a given vector space. Let $C_n$ be the set of first $n$ natural numbers. Since the elements of $C_n$ will be treated as symbols, we write $C_n = \{[1], [2], \ldots , [n]\}$. 
    Given any vector space $A$, we set $\mathcal{P}_n= \mathrm{Hom}(\mathbf{k}[C_n]\otimes A^{\otimes n},A)$ for $n \geq 1$. For any $f \in \mathcal{P}_m$, $g \in \mathcal{P}_n$ and $1 \leq i \leq m$, one define an element $f \circ_i g \in \mathcal{P}_{m+n-1}$ by
    \medskip
        \begin{align*}
            &(f \circ_i g) ([r]; a_1, \ldots , a_{m+n-1})\\
                   & = \begin{cases}
                        f\big( \medmath{[r]}; a_1, \ldots, a_{i-1}, g( \medmath{ [1]+ \cdots +[n]}; a_i, \ldots, a_{i+n-1}), a_{i+n}, \ldots , a_{m+n-1}  \big) &\text{if~~~}{1 \leq r \leq i-1}\\
                        f\big( \medmath{[i]}; a_1, \ldots, a_{i-1}, g( \medmath{[r-i+1]}; a_i, \ldots, a_{i+n-1}), a_{i+n}, \ldots , a_{m+n-1}  \big) &\text{if~~~}  i \leq r \leq i+n-1\\
                        f\big(\medmath{ [r-n+1]}; a_1, \ldots, a_{i-1}, g( \medmath{ [1]+ \cdots +[n] }; a_i, \ldots, a_{i+n-1}), a_{i+n}, \ldots , a_{m+n-1}  \big) &\text{if~~~} {i+n \leq r \leq m+n-1},
                    \end{cases}            
        \end{align*}
\medskip
        
      \noindent  where $[r] \in C_{m+n-1}$ and $a_1, \ldots , a_{m+n-1} \in A$. Then the collection of spaces $\mathcal{P} = \{ \mathcal{P}_n \}_{n \geq 1}$ equipped with the partial compositions defined above forms a nonsymmetric operad with the identity element $\mathbbm{1} \in \mathcal{P}_1 = \mathrm{Hom} ({\bf k}[C_1] \otimes A, A)$ being given by $\mathbbm{1}([1]; a)=a$ for $a \in A$. It is important to note that an element $\pi \in \mathcal{P}_2 =  \mathrm{Hom} ({\bf k}[C_2] \otimes A^{\otimes 2}, A)$ is equivalent to have two bilinear operations $\prec, \succ : A \times A \rightarrow A$ given by $a \prec b = \pi ([1]; a, b)$ and $a \succ b = \pi ([2]; a, b)$, for $a, b \in A$. Then it turns out that $\pi$ is a multiplication on the nonsymmetric operad $\mathcal{P}$ if and only if $(A, \prec, \succ)$ is a dendriform algebra. Hence, one may explicitly write the Fr\"{o}licher-Nijenhuis bracket and the derived bracket for a given dendriform algebra $(A, \prec, \succ)$. Note that these brackets involve only the cup product, partial compositions, the differential $\delta_\pi$ and the map $\theta$. Here we shall explicitly write the cup product and the map $\theta$ only, which can be obtained using partial compositions. The differential $\delta_\pi$ can also be written explicitly using (\ref{delta-pi-lr}). More precisely, for $f \in \mathcal{P}_m$, $g \in \mathcal{P}_n$ and $[r] \in C_{m+n}$, $[s] \in C_{m+1}$,

        

        \begin{align*}
            (f \cup_\pi g )(\medmath{ [r]; a_1, \ldots , a_{m+n}})= 
                \begin{cases}
                    f( \medmath{[r]; a_1, \ldots , a_m}) \prec g( \medmath{[1]+\cdots +[n]; a_{m+1}, \ldots ,a_{m+n}}) & \text{if~}  \medmath{1 \leq r \leq m}\\
                    f( \medmath{[1]+\cdots +[m]; a_1, \ldots ,a_m}) \succ g( \medmath{[r-m]; a_{m+1}, \ldots , a_{m+n}}) & \text{if~}  \medmath{m+1 \leq r \leq m+n},
                \end{cases}
        \end{align*}

        \medskip

            \begin{align*}
           & \theta f ([s]; a_1, \ldots , a_{m+1})\\
                &= \begin{cases}
                  ~ - f([1]; a_1, \ldots ,a_m) \prec a_{m+1} + (-1)^{m} ~ \!  a_1 \prec f([1]+ \cdots + [m]; a_2, \ldots ,a_{m+1}) & \text{if~} s=1\\
                   ~ - f([s]; a_1, \ldots ,a_m) \prec a_{m+1} + (-1)^{m} ~ \! a_1 \succ f([s-1]; a_2, \ldots ,a_{m+1}) & \text{if~} 2 \leq s \leq m\\
                    ~  - f([1]+ \cdots +[m]; a_1, \ldots ,a_m) \succ a_{m+1} + (-1)^{m} ~ \! a_1 \succ f([m]; a_2, \ldots ,a_{m+1}) & \text{if~} s=m+1.
                \end{cases}
        \end{align*}
        Finally, the map $d_\lambda$ is given by

        \begin{align*}
              d_\lambda f([s]; a_1, \ldots , a_{m+1})
              &= \lambda~\! \bigg\{ \sum_{i=1}^{s-2} (-1)^i ~ \! f ([s-1]; a_1, \ldots, a_{i-1}, a_i \prec a_{i+1} + a_i \succ a_{i+1}, \ldots, a_{m+1}) \\
              & \qquad + \sum_{i=s-1, s} (-1)^i ~ \! f ([i]; a_1, \ldots, a_{i-1}, \pi ([s-i+1]; a_i, a_{i+1}), \ldots, a_{m+1} ) \\
              & \qquad +\sum_{i=s+1}^m (-1)^i ~ \! f ([s]; a_1, \ldots, a_{i-1}, a_i \prec a_{i+1} + a_i \succ a_{i+1}, \ldots, a_{m+1}) \bigg\}.
        \end{align*}

    \medskip

    \medskip
Next, we shall explicitly write the brackets for a given Hom-associative algebra. Let $\mathcal{P} = (\mathcal{P}, \circ, \mathbbm{1})$ be a nonsymmetric operad and $\alpha \in \mathcal{P}_1$ be an element. For each $n \geq 1$, we define
    \begin{align*}
        \mathcal{P}_n^\alpha := \{ f \in \mathcal{P}_n ~ \! | ~\! \alpha \circ_1 f =  ( \cdots (( f \circ_n \alpha) \circ_{n-1} \alpha) \cdots ) \circ_1 \alpha ~\!   \}.
    \end{align*}
    Then $\mathcal{P}^\alpha = (\mathcal{P}^\alpha, \circ^\alpha , \mathbbm{1})$ is a nonsymmetric operad, where the partial compositions $\circ_i^\alpha : \mathcal{P}_m^\alpha \times \mathcal{P}_n^\alpha \rightarrow \mathcal{P}_{m+n-1}^\alpha$ (for $m , n \geq 1$ and $1 \leq i \leq m$) are given by
    \begin{align}\label{hom-partial-comp}
        f \circ_i^\alpha g := ( \cdots (((( \cdots (f \circ_n \alpha^{n-1}) \cdots ) \circ_{i+1} \alpha^{n-1} ) \circ_i g ) \circ_{i-1} \alpha^{n-1} ) \cdots ) \circ_1 \alpha^{n-1},
    \end{align}
    for $f \in \mathcal{P}_m^\alpha$ and $g \in \mathcal{P}_n^\alpha$ \cite{das}. That is, $ f\circ_i^\alpha g$ is obtained by inserting $\alpha^{n-1}$ in all possible positions of $f$ except the $i$-th position where we insert the element $g$. The partial compositions (\ref{hom-partial-comp}) can be graphically understood by the following diagram:

   \medskip

   \medskip
    
    \begin{center}
        \begin{tikzpicture}[scale=.4]
            \draw (-17,-2) -- (-15,-4); \draw (-16,-2) -- (-15,-4); \draw (-13,-2)-- (-15,-4); \draw (-15,-6)-- (-15,-4) node[midway, right]{$f \in \mathcal{P}_m^\alpha$};
            \draw (-10,-4) circle (4pt) node[right]{$_i ^\alpha$};
            \draw (-7,-2) -- (-5,-4); \draw (-6,-2) -- (-5,-4); \draw (-3,-2)-- (-5,-4); \draw (-5,-6)-- (-5,-4) node[midway, right]{$g  \in \mathcal{P}_n^\alpha$};
            \draw (-.25,-3.9) -- (.25,-3.9); \draw (-.25,-4.1) -- (.25,-4.1);
            \draw (7.5,-4) -- (7.5,-7) node[midway, right]{$f$} ;
            \draw (7.5,-4) -- (3,-2) ; \draw (7.5,-4) -- (4.5,-2) ; \draw (7.5,-4) -- (8.5,-2) node[below]{\tiny{$i$}};  \draw  (7.5,-4)  -- (12,-2) ;
            \draw (8.5,-2) -- (8.5,-1) node[midway, left]{$g$} ;
            \draw (8.5,-1) -- (7.5,0); \draw (8.5,-1) -- (8,0); \draw (8.5,-1) -- (9.5,0);
            \draw (3,0) -- (3,-2) node[midway, left]{\tiny{$\alpha^{n-1}$}} ; \draw (4.5,0) -- (4.5,-2)node[midway, right]{\tiny{$\alpha^{n-1}$}} ;  \draw (12,0) -- (12,-2) node[midway, right]{\tiny{$\alpha^{n-1}$}};
            \filldraw [gray] (3,-2) circle (2pt); \filldraw [gray] (4.5,-2) circle (2pt); \filldraw [gray] (8.5,-2) circle (2pt); \filldraw [gray] (12,-2) circle (2pt);
            \draw[dashed,ultra thin,gray] (5,-2) -- (8, -2);
            \draw[dashed,ultra thin,gray] (9,-2) -- (11.5, -2);
            \draw[dashed,ultra thin,gray] (8.3,0) -- (9.2, 0);
            \draw[dashed,ultra thin,gray] (-15.6,-2) -- (-13.4,-2);
            \draw[dashed,ultra thin,gray] (-5.6,-2) -- (-3.4,-2);
        \end{tikzpicture} .
    \end{center}

    \medskip

   \noindent  The nonsymmetric operad $\mathcal{P}^\alpha$ defined above is said to be obtained from $\mathcal{P}$ twisted by the element $\alpha \in \mathcal{P}_1$. It follows that a multiplication on the nonsymmetric operad $\mathcal{P}^\alpha$ is given by an element $\varpi \in \mathcal{P}_2$ satisfying 
    \begin{align*}
       \alpha \circ_1 \varpi = (\varpi \circ_2 \alpha) \circ_1 \alpha  \quad \text{ and } \quad (\varpi \circ_2 \alpha) \circ_1 \varpi = (\varpi \circ_2 \varpi) \circ_1 \alpha.
    \end{align*}
    When $\alpha = \mathbbm{1}$, the operad $\mathcal{P}^\alpha$ is same with the operad $\mathcal{P}$ itself, and a multiplication on $\mathcal{P}^\alpha$ is same with a multiplication on $\mathcal{P}$. Let $\mathcal{P} = \mathrm{End}_A$ be the endomorphism operad associated with the vector space $A$ and let $\alpha \in (\mathrm{End}_A)_1 = \mathrm{Hom}(A, A)$ be a linear map. Then $(\mathrm{End}_A^\alpha)_n = \{ f \in \mathrm{Hom} (A^{\otimes n}, A) ~ \! | ~ \! \alpha \circ f = f \circ \alpha^{\otimes n} \}$ for all $n \geq 1$. Moreover, a multiplication on the nonsymmetric operad $\mathcal{P}^\alpha = \mathrm{End}_A^\alpha$ corresponds to a bilinear operation $\cdot  : A \times A \rightarrow A$, $(a, b) \mapsto a \cdot b$ satisfying
    \begin{align}\label{hom-asso}
        \alpha (a \cdot b) = \alpha (a) \cdot \alpha (b) \quad \text{ and } \quad (a \cdot b) \cdot \alpha (c) = \alpha (a) \cdot (b \cdot c),
    \end{align}
    for all $a, b, c \in A$. The first one is called the `multiplicativity' condition, and the second one is the `Hom-associativity'. A triple $(A, ~ \! \cdot ~ \!, \alpha)$ consisting of a vector space $A$ endowed with a bilinear operation $\cdot: A \times A \rightarrow A$ and a linear map $\alpha: A \rightarrow A$ satisfying the identities in (\ref{hom-asso}) is called a {\bf Hom-associative algebra} \cite{makh-o,makh-sil}, and $\alpha$ is called the `twist map' of the Hom-associative algebra. Thus, a Hom-associative algebra structure on a vector space $A$ with the twist map $\alpha$ is equivalent to having a multiplication on the nonsymmetric operad $\mathrm{End}_A^\alpha$. 
Hence, by considering the nonsymmetric operad $\mathrm{End}_A^\alpha$ along with the multiplication $\varpi \in (\mathrm{End}_A^\alpha)_2$ defined by $\varpi (a, b) = a \cdot b$, one obtain the explicit brackets for the given Hom-associative algebra $(A, ~ \! \cdot ~ \! , \alpha)$. More precisely, for $f \in (\mathrm{End}_A^\alpha)_m$ and $g \in (\mathrm{End}_A^\alpha)_n$, we have
  \begin{align*}
        &[f,g]_\mathsf{FN}(a_1, \ldots , a_{m+n}) \\
        &= (\alpha^{n-1} f(a_1, \ldots , a_m )) \cdot (\alpha^{m-1} g(a_{m+1}, \ldots ,a_{m+n})) \\
        & \quad  - (-1)^{mn} ~ \!   (\alpha^{m-1} g(a_1, \ldots , a_n )) \cdot (\alpha^{n-1} f(a_{n+1}, \ldots , a_{m+n} ))  \nonumber \\
        & \quad + (-1)^m ~ \! \sum_{i=1}^n (-1)^{(i-1)m} ~\! g ( \alpha^m a_1, \ldots ,  \alpha^m a_{i-1}, \delta_\varpi f (a_i, \ldots , a_{i+m}), \alpha^m a_{i+m+1} , \ldots ,  \alpha^m a_{m+n} ) \nonumber \\
        & \quad - (-1)^{(m+1)n} ~ \! \sum_{i=1}^m (-1)^{(i-1)n} ~ \! f ( \alpha^n a_1, \ldots , \alpha^n a_{i-1}, \delta_\varpi g (a_i, \ldots , a_{i+n}), \alpha^n a_{i+n+1}, \ldots , \alpha^n a_{m+n} ), 
    \end{align*}
\begin{align*}
        &[f,g]_\mathsf{D}(a_1, \ldots , a_{m+n})  \\
        &= (\alpha^{n-1} f(a_1, \ldots , a_m )) \cdot (\alpha^{m-1} g(a_{m+1}, \ldots ,a_{m+n})) \\
        & \quad  - (-1)^{mn} ~ \!   (\alpha^{m-1} g(a_1, \ldots , a_n )) \cdot (\alpha^{n-1} f(a_{n+1}, \ldots , a_{m+n} ))  \nonumber \\
        & \quad + \sum_{i=1}^n (-1)^{(i-1)m} ~\! g \big (\alpha^m a_1, \ldots , \alpha^m a_{i-1},  (\theta f) (a_i, \ldots, a_{i+m}), \alpha^m a_{i+m+1},  \ldots , \alpha^m a_{m+n} \big) \nonumber \\
        & \quad - (-1)^{mn} \sum_{i=1}^m (-1)^{(i-1)n} ~\!  f \big(\alpha^n a_1, \ldots , \alpha^n a_{i-1},  ( \theta g) (a_i, \ldots , a_{i+n}), \alpha^n a_{i+n+1},   \ldots , \alpha^n a_{m+n} \big), \nonumber
    \end{align*}
    \begin{align*}
        d_\lambda f (a_1, \ldots , a_{m+1} )= \lambda \sum_{i=1}^m (-1)^i ~ \! f \big( \alpha (a_1), \ldots , \alpha ( a_{i-1}) , a_i \cdot a_{i+1}, \alpha (a_{i+2}), \ldots , \alpha ( a_{m+1}) \big).
    \end{align*}
    Here $\delta_\varpi , ~ \! \theta : (\mathrm{End}_A^\alpha)_m \rightarrow (\mathrm{End}_A^\alpha)_{m+1}$ are the maps given by
 \begin{align*}
        \delta_\varpi f (a_1, \ldots , a_{m+1}) =~& \alpha^{m-1} (a_1) \cdot f(a_2, \ldots, a_{m+1})  + (-1)^{m+1} ~\!  f (a_1, \ldots , a_m)\cdot \alpha^{m-1} (a_{m+1})  \\
        &+ \sum_{i=1}^m (-1)^i ~ \!  f \big( \alpha (a_1), \ldots, \alpha (a_{i-1}), a_i \cdot a_{i+1}, \alpha ( a_{i+2}), \ldots , \alpha( a_{m+1}) \big),\\
        \theta f (a_1, \ldots , a_{m+1}) =~&  -  f (a_1, \ldots , a_m)\cdot \alpha^{m-1} (a_{m+1}) + (-1)^m ~ \!\alpha^{m-1} (a_1) \cdot f(a_2, \ldots, a_{m+1}).
    \end{align*}
    Note that when $\alpha = \mathrm{Id}_A$ the identity map on $A$, the above-defined Fr\"{o}licher-Nijenhuis bracket and derived bracket coincide with (\ref{asso-fn-exp}) and (\ref{derived-das}), respectively.

\medskip

Another associative-like algebraic structure that a nonsymmetric operad with multiplication can describe is the notion of associative conformal algebra \cite{bakalov,hou}. Recall that an {\bf associative conformal algebra} is a pair $(A, \cdot_\lambda \cdot)$ consisting of a $\mathbb{C}[\partial]$-module $A$ equipped with a $\mathbb{C}$-linear map (called the {\em $\lambda$-multiplication}) $\cdot_\lambda \cdot : A \otimes A \rightarrow A [ \lambda ]$, $a \otimes b \mapsto a_\lambda b$ satisfying the conformal sesquilinearity conditions $(\partial a)_\lambda b = - \lambda a_\lambda b$, ~$a_\lambda (\partial b) = (\partial + \lambda) a_\lambda b$ and the following conformal associativity:
\begin{align*}
    a_\lambda (b_\mu c) = (a_\lambda b)_{\lambda + \mu} c, \text{ for all } a, b, c \in A.
\end{align*}

Let $A$ be any arbitrary $\mathbb{C}[\partial]$-module. For any $n \geq 1$ and parameters $\lambda_1, \ldots, \lambda_{n-1}$, a $\mathbb{C}$-linear map $f: A^{\otimes n} \rightarrow A [\lambda_1, \ldots, \lambda_{n-1}], ~ a_1 \otimes \cdots \otimes a_n \mapsto f_{\lambda_1, \ldots, \lambda_{n-1}} (a_1, \ldots, a_n)$ is said to be a conformal sesquilinear map if for all $a_1, \ldots, a_n \in A$,
\begin{align*}
    f_{\lambda_1, \ldots, \lambda_{n-1}} (a_1, \ldots, \partial a_i, \ldots, a_n) =~& - \lambda_i f_{\lambda_1, \ldots, \lambda_{n-1}} (a_1, \ldots, a_n), \text{ for }  1 \leq i \leq n-1,\\
     f_{\lambda_1, \ldots, \lambda_{n-1}} (a_1, \ldots, a_{n-1}, \partial a_n) =~& (\partial + \lambda_1 + \cdots + \lambda_{n-1}) ~ \!  f_{\lambda_1, \ldots, \lambda_{n-1}} (a_1, \ldots, a_n).
\end{align*}
We set $\mathcal{P}_n = \{  f: A^{\otimes n} \rightarrow A [\lambda_1, \ldots, \lambda_{n-1}] ~ | ~ f \text{ is conformal sesquilinear}\}$.  Then for any $f \in \mathcal{P}_m$, $g \in \mathcal{P}_n$ and $1 \leq i \leq m$, one may define an element $f \circ_i g \in \mathcal{P}_{m+n-1}$ by
\begin{align*}
    &(f \circ_i g)_{\lambda_1, \ldots, \lambda_{m+n-2}} (a_1, \ldots, a_{m+n-1}) \\
    & \qquad = f_{\lambda_1, \ldots, \lambda_{i-1}, \lambda_i + \cdots + \lambda_{i+n-1}, \ldots, \lambda_{m+n-2}} \big( a_1, \ldots, a_{i-1}, g_{\lambda_i, \ldots, \lambda_{i+n-2}} (a_i, \ldots, a_{i+n-1}), \ldots, a_{m+n-1} \big),
\end{align*}
for any $a_1, \ldots, a_{m+n-1} \in A$. It has been shown \cite{hou} that the collection $\mathcal{P} = \{ \mathcal{P}_n \}_{n \geq 1}$ together with the partial compositions defined above forms a nonsymmetric operad with the identity element $\mathbbm{1} \in \mathcal{P}_1$ being given by $\mathbbm{1} (a) = a$, for $a \in A$. Moreover, an element $\pi \in \mathcal{P}_2$ (which is equivalent to having a $\mathbb{C}$-linear conformal sesquilinear map $\cdot_\lambda \cdot: A \otimes A \rightarrow A [\lambda]$ given by $a_\lambda b = \pi_\lambda (a, b)$ for $a, b \in A$) is a multiplication on the nonsymmetric operad $\mathcal{P}$ if and only if $(A, \cdot_\lambda \cdot)$ is an associative conformal algebra. Hence, by considering the operad $\mathcal{P}$ with the multiplication $\pi$, one can write the explicit forms of the Fr\"{o}licher-Nijenhuis bracket and derived bracket for the associative conformal algebra $(A, \cdot_\lambda \cdot)$.

\medskip

\medskip

    \noindent {\bf Acknowledgements.} Anusuiya Baishya would like to acknowledge the financial support from the Department of Science and Technology, Government of India, through the INSPIRE Fellowship [IF210619]. Both authors thank the Department of Mathematics, IIT Kharagpur, for providing the beautiful academic atmosphere where the research has been carried out.
    
\medskip

\noindent {\bf Conflict of interest statement.} On behalf of all authors, the corresponding author states that there is no conflict of interest.

\medskip

\noindent {\bf Data Availability Statement.} Data sharing does not apply to this article as no new data were created or analyzed in this study.

\end{document}